\documentclass[11pt,leqno]{article}  
\usepackage{amsmath,amstext, amsthm,amsfonts,amssymb,amsthm}  
 \numberwithin{equation}{section} 
  \textheight=8.5in
 \textwidth =6.2in 
  \voffset= -.68in
   \hoffset=-.6in
\parskip=6pt plus 2pt minus 2pt
  
\def\R{\mathbb{ R}}  
  
\usepackage{epsfig}
\newtheorem{thm}{Theorem}[section]
\newtheorem{lem}[thm]{Lemma}
\renewcommand{\O}{\mathcal{O}}
\newtheorem{cor}[thm]{Corollary}
\newcommand{\E}{{\cal E}_q}

\newcommand{\be}{\begin{equation}}
\newcommand{\ee}{\end{equation}}

\newcommand{\ba}{\begin{array}}
\newcommand{\ea}{\end{array}}

\newcommand{\bg}{\begin{gathered}}
\newcommand{\eg}{\end{gathered}}
\renewcommand{\r}{\rho}

\renewcommand{\a}{\alpha}
\renewcommand{\b}{\beta}
\renewcommand{\t}{\theta}
\renewcommand{\O}{\mathcal{O}}
\renewcommand{\l}{\lambda}
\newcommand{\f}{\phi}
\renewcommand{\r}{\rho}

\newcommand{\z}{\zeta}

\newcommand{\mc}{\mathcal}
\newcommand{\cqHp }{continuous $q$-Hermite polynomials }

\newcommand{\aw}{ Askey--Wilson  }
\newcommand{\gauss}[2]{\genfrac{[}{]}{0pt}{}{#1}{#2}_q}

\newtheorem{rem}[thm]{Remark}
\newcommand{\bea}{\begin{eqnarray}}
\newcommand{\eea}{\end{eqnarray}}

\newcommand{\D}{\mathcal{D}_q}

\newcommand{\Sum}{\sum_{n=0}^\infty}
\renewcommand{\(}{\left( } \renewcommand{\)}{\right) }

\begin{document}
\title{$q$-Fractional Askey--Wilson Integrals and Related  Semigroups of Operators}

\author{ Mourad E. H.  Ismail 
 \\
 \and Ruiming Zhang \thanks{Corresponding author, research partially supported by National Natural 
 Science Foundation of China, grant No. 11771355.}
 \and Keru Zhou}
\maketitle


\begin{abstract}
 We introduce three one-parameter semigroups of operators and 
 determine their spectra. Two of them are fractional integrals 
 associated with the Askey--Wilson  operator. We also study these families  as families of positive linear approximation operators. Applications 
 include connection relations and bilinear formulas 
 for  the Askey--Wilson  polynomials. We also introduce a $q$-Gauss--Weierstrass transform and prove a representation and inversion theorem for it. 
\end{abstract}
2000 Mathematics Subject Classification: Primary: 33D45; 33C45   Secondary: 47D03; 26A33; 41A36.

 Filename: IZZFracV3.tex
 
 
 
 \section{Introduction} The Askey-Wilson operator $\D$ is a 
 divided difference   operator 
 which acts as a lowering (or annihilation) operator on the Askey--Wilson  polynomials and many special and limiting cases of them.  
 Several works defined a right inverse to the Askey--Wilson  operator, see for 
 example 
 \cite{Bro:Ism}, \cite{Ism:Zha1}, \cite{Ism:Rah}. The idea in these studies 
 was  to define $\D^{-1}$ on a basis for a weighted $L_2$ space then 
 extend it by linearity. On the other hand Hermann Weyl 
 introduced his fractional integral operator of order $\a$ as 
 the linear operator whose action on $e^{inx}, n \ne 0$ is 
 $e^{inx}/(in)^\a$. He then extended this by linearity, and
 identified the result as an integral transform. This was 
 done for the Askey-Wilson operator by Ismail in \cite{Ism2} and the 
 kernel was identified explicitly in \cite{Ism:Rah}. 
 
 The purpose of this work is to introduce a variation 
 on the Ismail-Rahman operators. This variation naturally leads to a semigroup of contraction operators, which we denoted by $\{T_a: a>0\}$.  This semigroup also has the property $\D T_a= T_{a-1}, a >1$. We study the applications of this semigroup as well as the semigroup of the adjoint operators $S_a$.  It turns out that  $S_a$ has the same parameter decreasing property with $\D$ replaced by a new operator $\mathcal{C}_q$.  Furthermore, we also identify their corresponding infinitesimal generators. 
   The operators $T_a$ are studied as approximation 
operators, the rate of convergence of $T_af$ to $f$ 
on $C[-1,1]$ is found, and  their 
adjoints $S_a$ are investigated as fractional  integral operators.  We note that there are 
many $q$-analogues of the standard operators in approximation 
theory but they all let $q \to1^-$ as their large parameter tends 
to $\infty$. Some references are in \cite{Ost}. The interested 
reader can find many references simply by 
searching the internet for ``$q$-Bernstein" 
or ``$q$-Sz\'{a}sz", $\cdots$". 

This work initiates a study of fractional powers of inverses of the Askey--Wilson  operator, so it is a $q$-analogue of the classical fractional integrals. 
Fractional integrals and derivatives have appeared in many areas of analysis and some of their applications are in Sneddon's classic 
\cite{Sne}.  It must be noted that the existing theory of $q$-fractional calculus deals with powers and inverses of the operator 
\begin{eqnarray}
\label{eqdq}
(D_qf)(x) = \frac{f(x)- f(qx)}{x-qx},
\end{eqnarray}
which is radically different from the Askey--Wilson operator defined 
in \eqref{eqDefAWOP}  below. Some references are \cite{Als:Ver1}, \cite{Als:Ver2},  and \cite{Ann:Man}.

We wish to indicate  that this work is just the starting point of a 
possibly rich theory of $q$-fractional integrals associated 
with Askey--Wilson type operators. Many problems remain open. For example we 
have not touched the problem of describing the range of the 
$q$-fractional integral operators 
defined in this work as operators on general weighted $L_p$ spaces.  
Examples of the work on describing the 
range of the classical fractional integrals  are \cite{Roo}, \cite{McB:Spr}, 
\cite{Raf:Sam}.  We described the range only when $T_a$, or its adjoint $S_a$ 
act on the weighted $L_2$ space of the $q$-Hermite polynomials.  In a future work we plan to apply the $q$-fractional 
integral operators presented here to solve dual  series 
equations. References to similar work for fractional integrals are in 
\cite{Sne}, see also Askey's interesting paper \cite{Ask}.  $q$-fractional integrals associated with $D_q$ of \eqref{eqdq} were 
used in \cite{Ash:Ism:Man} to solve dual and triple series equations. 
Another work in progress is to study analogues of Leibniz rule 
for the fractional 
powers of the Askey--Wilson  operators. Some progress has been made towards 
the Leibniz rule. The Leibniz rule for fractional powers of $D_q$ is
 in 
\cite{Als:Ver1}--\cite{Als:Ver2}. For details, see also \cite{Ann:Man}.  
 
Semigroups usually describe time evolutionary processes. It may be seen from the connection between semigroups and initial value problems, 
 \cite{But:Ber}. Given a semigroup $T(t)$ whose infinitesimal generator is $J$,  and a function $f$, independent of $t$, the function $(T(t) f)(x)$ solves the initial value problem $J y(x,t) = \frac{\partial y(x,t)}{\partial t}$, with initial condition $y(x,0) = f(x)$.  Therefore each infinitesimal generator we identify leads to a solution of an initial value problem. 
 
 We shall follow the standard notations for $q$-series as in 
 \cite{Gas:Rah}, \cite{And:Ask:Roy}.  All the results used in this work are in Ismail's book \cite{Ismbook}. We shall also use the 
 Rahman notation, \cite{Gas:Rah}, 
 \begin{eqnarray}
 h(\cos \t;a_1, \cdots, a_n) := \prod_{j=1}^n (a_j e^{i\t}, a_j e^{-i\t};q)_\infty.
 \end{eqnarray}
 
 The continuous $q$-Hermite polynomials play an essential role in the development of the $q$-fractional integral operators $T_a$.  Because of this we state few of their properties. They have the generating function 
 \begin{eqnarray}
 \Sum \frac{H_n(\cos \t|q)}{(q;q)_n} t ^n = \frac{1}{(te^{i\t}, te^{-i\t};q)_\infty},
 \label{eqGFHn}
 \end{eqnarray}
and the recurrence relation 
\begin{eqnarray}
\label{eqqHrr}
\bg
H_0(x|q) =1, \quad H_1(x|q) = 2x,\\
2xH_n(x|q) = H_{n+1}(x|q) + (1-q^n) H_{n-1}(x|q). 
 \eg
 \end{eqnarray}
 The normalized 
 weight function of the \cqHp is  given by
 \begin{eqnarray}
 w_H(x|q) = \frac{(q, e^{2i\t}, e^{-2i\t};q)_\infty}{2\pi \sqrt{1-x^2}}, 
 \end{eqnarray}
 then the orthogonality  relation is 
 \begin{eqnarray}
 \label{eqorcqH}
 \int_{-1}^1 H_m(x|q)H_n(x|q) w_H(x|q)  dx = (q;q)_n \delta_{m,n}.
 \end{eqnarray}
Details are in \cite{Ismbook}, \cite{And:Ask:Roy}, \cite{Koe:Swa}. 

Section 2 contains some preliminary material. In 
Section 3 we define the operators $T_a$ and we show that 
they form a  semigroup of strongly continuous operators. We also 
identify the infinitesimal generator of $T_a$. 
Section 4 evaluates the actions of the Askey--Wilson  operator on certain 
polynomial basis, which are $q$-analogues of $(x+a)^n$, and 
on $q$-exponential functions. In Section 5 we derive several 
properties of  the semigroup generated by $S_a$, the adjoints of $T_a$.
 We also include the inversion 
formulas for $T_a$ and $S_a$. In Section 6 we show that 
   $T_a$ maps an Askey--Wilson  polynomial to an 
   Askey--Wilson  polynomial with different parameters. This is 
   used to derive connection relations and bilinear formulas for 
   special  Askey--Wilson  polynomials. In Section 7 we study the 
   approximation properties of the operators $T_a$ as  maps from $C[-1,1]$ to infinitely differentiable functions.  We must note that the infinitesimal 
   generator of the semigroup is related to a  Voronovskaya 
   type relation. In Section 8, we prove that  $T_a$ are 
   contraction maps. In Section 9 we study 
   a family of operators $F_a$ initially introduced in \cite{Sus}. We 
   study their approximation properties as positive linear 
   approximation operators and identify their infinitesimal generator. 
   We also show that they are also essentially fractional powers 
   of an inverse 
   to a divided difference operator $B$ defined in 
   \eqref{eqdefB}, which is 
   not the \aw operator.  We also  indicate a transform for which 
   $F_a$ acts as a $q$-analogue of a multiplier. 
In Section 10 we briefly mention an analogue of the 
Gauss--Weierstrass transform and give a representation 
and inversion formula for our transform. In the last section, Section 11, 
we apply the fractional integral operators of Section 3 to solve certain dual integral equations following techniques developed earlier by Noble \cite{Nob} and Sneddon \cite{Sne}. 

The Askey-Wilson operator is a linear operator for functions on the real line that is defined via a complex variable $z$ and its reciprocal  $1/z$ on the complex plane. For any function $f(x)$ we write $x$ as 
$x = (z+1/z)/2$ and associate it to  the 
function $\breve{f}(z):= f(x)$.  Then the  \aw operator is defined by 
\bea
\label{eqDefAWOP}
(\D f)(x) = \frac{\breve{f}(q^{1/2}z) - \breve{f}(q^{-1/2}z)}{(q^{1/2}-q^{-1/2})(z-1/z)/2}.
\eea
 In the sequel we will assume that the functions are defined on 
 $ [-1,1]$ unless explicitly stated otherwise. In order to define $\D$ on such function, say $f$, we 
 must tacitly assume that $f$ is defined for $|z| \le q^{-1/2}$, with 
 $x = (z+1/z)/2$. For the brevity we will not always remind readers of this assumption but they must be made aware of it. 
 
 We wish to note that the $q$-Hermite polynomials play a central part 
 in the defining the semigroups of this paper. When $q >1$, with 
 appropriate scaling  the polynomials become the $q^{-1}$-Hermite 
 polynomials whose analytic properties are radically different, 
 \cite{Ism:Mas}. They 
 are now orthogonal on $\R$ with respect to infinitely many  probability 
 measures. The case $q > 1$ has a theory that parallels the theory presented here,  albeit subtler, and it will be developed in a future work.

 \section{Preliminaries}  
 
 Ismail and Rahman \cite{Ism:Rah} showed that the operator
$\D^{-1}$ defined by 
 \begin{eqnarray}
 \label{eq2.1}
 \bg 
\frac{2\; q^{1/4}} {(1-q)(q;q)_\infty}  (\D^{-1}f)(\cos \t) \\
 = 
 \int_0^\pi  
 \frac{(-q^{1/4} e^{i\t}, -q^{1/4} e^{-i\t};q^{1/2})_\infty w_H(\cos \f) 
 f(\cos \f)}
 {(-q^{1/4} e^{i\f}, -q^{1/4} e^{-i\f};q^{1/2})_\infty  h(\cos \f; 
 q^{1/2}e^{i\t}, q^{1/2} e^{-i\t})} \sin \f d\f, 
 \eg
 \end{eqnarray}
 is a right inverse to the Askey--Wilson  operator.  
 The original definition in \cite{Ism:Rah} had an additional $f$ 
 dependent constant term which we decided to drop.  
 What is important is that the strong operator limit
 $\lim_{p \to q^-}  \mathcal{D}_p \D^{-1}$ is still the identity operator. 
 The Ismail--Rahman approach follows the original approach of Hermann Weyl when he introduced the Weyl fractional integrals, 
 see \cite{Zyg}. 
 
 We now iterate $\D^{-1}f$ and compute $\D^{-n}$ for any 
 natural number $n$. 
 Recall that the Poisson kernel for the \cqHp is \cite[Theorem 13.1.6]{Ismbook}
 \begin{eqnarray}
 \label{eqPoiKer}
 \Sum H_n(\cos \t|q)  H_n(\cos \f|q) \frac{t^n}{(q;q)_n} = 
 \frac{(t^2;q)_\infty}{(t e^{i(\t+\f)}, t e^{i(\t-\f)}, t e^{-i(\t+\f)}, 
 t e^{i(\f-\t)};q)_\infty}. 
 \end{eqnarray}
     
 We shall need the following fact.
 
 \begin{thm}\label{Lem1}Let $p_n$ be orthonormal with respect to a 
 probability measure $\mu$ on a compact  interval $[a,b]$. If the 
 Poisson kernel  $P_r(x,y):= \Sum p_n(x)p_n(y) r^n$ is $\ge 0$ for all $x, y \in$ supp$(\mu) \subset  [a,b]$,  then 
 \begin{eqnarray}
 \lim_{r \to 1^-} \int_a^b  P_r(x,y) f(y) d\mu(y) = f(x)
 \notag
 \end{eqnarray}
is uniform  for all continuous functions on $[a,b]$.
 \end{thm}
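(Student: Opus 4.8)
The plan is to recognize this as a Korovkin-type theorem for positive linear operators and to read off its hypotheses from the structure of the Poisson kernel. Write $K := \mathrm{supp}(\mu)$, a compact subset of $[a,b]$, and introduce the linear operator
\[
(L_r f)(x) := \int_a^b P_r(x,y) f(y)\, d\mu(y), \qquad x \in K .
\]
The whole proof hinges on the observation that the hypothesis $P_r(x,y)\ge 0$ on $K\times K$ is \emph{exactly} the positivity of $L_r$: if $f\ge 0$ on $K$ then $(L_r f)(x)\ge 0$. So the only remaining task is to check that $L_r$ reproduces the Korovkin test functions in the limit $r\to 1^-$.

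First I would establish the reproducing relation $L_r p_m = r^m p_m$ on $K$. The positivity forces $P_r(x,x)=\Sum p_n(x)^2 r^n$ to be a convergent series of nonnegative terms for each $x\in K$, and since $0<r<1$ this gives $\Sum p_n(x)^2 r^{2n}<\infty$. Hence the partial sums of $P_r(x,\cdot)$ are Cauchy in $L_2(\mu)$ and converge there to $P_r(x,\cdot)$; continuity of the inner product against $p_m$ then justifies term-by-term integration,
\[
(L_r p_m)(x) = \Sum p_n(x) r^n \int_a^b p_n(y) p_m(y)\, d\mu(y) = r^m\, p_m(x).
\]
The same $L_2$ membership shows $L_r f$ is well defined for every continuous $f$. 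Because $\mu$ is a probability measure, $p_0\equiv 1$, so in particular $L_r e_0 = e_0$, where $e_i(x)=x^i$.

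Next I would transfer the eigenrelation to the test functions $e_0,e_1,e_2$. Since $e_1\in\mathrm{span}\{p_0,p_1\}$ and $e_2\in\mathrm{span}\{p_0,p_1,p_2\}$, linearity expresses each difference $L_r e_i - e_i$ ($i=1,2$) as a fixed polynomial in $x$ of bounded degree whose coefficients are combinations of $(r-1)$ and $(r^2-1)$, hence tend to $0$ as $r\to 1^-$. As $K$ is compact this yields $L_r e_i \to e_i$ uniformly in $x$ for $i=0,1,2$. Consequently
\[
\int_a^b P_r(x,y)(y-x)^2\, d\mu(y) = (L_r e_2)(x) - 2x\,(L_r e_1)(x) + x^2\,(L_r e_0)(x) \longrightarrow 0
\]
uniformly in $x\in K$.

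Finally I would run the classical Korovkin argument. Given continuous $f$ and $\varepsilon>0$, uniform continuity provides $\delta>0$ with $|f(y)-f(x)|\le \varepsilon + 2\|f\|_\infty \delta^{-2}(y-x)^2$ for all $x,y$. Integrating this pointwise bound against the nonnegative kernel and using $(L_r e_0)(x)=1$ bounds $|(L_r f)(x)-f(x)|$ by $\varepsilon$ plus a fixed multiple of the second moment above, uniformly in $x$; letting $r\to 1^-$ and then $\varepsilon\to 0$ finishes the proof. The main obstacle is the justification of the term-by-term reproducing identity, which is where the convergence of $P_r(x,x)$ supplied by positivity is used; everything after that is the routine positive-operator estimate, and the conceptual point is simply that $P_r\ge 0$ is the positivity needed to invoke Korovkin.
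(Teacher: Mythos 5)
Your proof is correct and follows essentially the same route as the paper, which simply observes that the statement ``follows from Korovkin's theorem'' applied to the positive operators $L_r$ and the test functions $e_0,e_1,e_2$. What you have written is a careful expansion of that one-line argument (the eigenrelation $L_r p_m = r^m p_m$, the reduction of $e_1,e_2$ to finite linear combinations of the $p_n$, and the second-moment estimate), supplying exactly the details the paper leaves to the reader.
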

 This follows from Korovkin's theorem, \cite{Kor}.  For completeness  
 we state Korovkin's theorem. Let $e_j(x) =x^j$. if ${L_n}$ is 
 a sequence of positive linear operators acting on continuous functions 
 on a compact interval $[a,b]$. If  $(L_n e_j)(x) \to e_j(x)$
 uniformly on $[a,b]$ for $j=0,1,2$ then $(L_nf)(x) \to f(x)$ uniformly 
 on $[a,b]$ for all continuous functions $f$. 
 
 \section{$q$-Fractional Integrals} 
 
 Motivated by the Ismail-Rahman operator \eqref{eq2.1} we define the operators
 \begin{eqnarray}
 \label{eqdefTa}
 \bg
 (T_a  f)(\cos \t) = \frac{(1-q)^a}{2^aq^{a/4}}\; (q^a;q)_\infty \\
 \times \int_0^\pi  
 \frac{(-q^{1/4} e^{i\t}, -q^{1/4} e^{-i\t};q^{1/2})_\infty w_H(\cos \f|q) f(\cos \f)}
 {(-q^{1/4} e^{i\f}, -q^{1/4} e^{-i\f};q^{1/2})_\infty  h(\cos \f; q^{a/2}e^{i\t}, q^{a/2} e^{-i\t})} \sin \f d\f. 
 \eg
 \end{eqnarray}
 
 \begin{thm}\label{thm1}
 The  operators  $\{T_a: a >0\}$ have the following properties. 
 
\noindent  $\textup{(a)}$ The family $\{T_a: a >0\}$ has the property, 
 $T_a T_b = T_{a+b}$.

\noindent   $\textup{(b)}$ On $C[-1,1]$, $T_a$ tends to the identity operator 
as $a \to 0^+$. 

\noindent $\textup{(c)}$  For $a>1$ we have the property $\D T_a = T_{a-1}$. 
 \end{thm}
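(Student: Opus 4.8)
The plan is to diagonalize all the $T_a$ simultaneously in the system $\psi_n:=\varpi\, H_n(\cdot|q)$, where I abbreviate $\varpi(\cos\t):=(-q^{1/4}e^{i\t},-q^{1/4}e^{-i\t};q^{1/2})_\infty$, and then to read off (a)--(c) from the eigenvalues. First I would record that $\varpi$ is continuous and strictly positive on $[-1,1]$ (pairing conjugate factors, the $k$-th pair equals $1+2q^{1/4+k/2}\cos\t+q^{1/2+k}\ge(1-q^{1/4+k/2})^2>0$), so dividing by $\varpi$ preserves both $C[-1,1]$ and $L_2(w_H)$. Putting $t=q^{a/2}$ and recognizing in \eqref{eqdefTa} the denominator $h(\cos\f;q^{a/2}e^{i\t},q^{a/2}e^{-i\t})$ together with the factor $(q^a;q)_\infty=(t^2;q)_\infty$ as precisely the Poisson kernel \eqref{eqPoiKer}, the definition collapses to
\[
(T_af)(\cos\t)=\frac{(1-q)^a}{2^aq^{a/4}}\,\varpi(\cos\t)\,\big(\mathcal P_{q^{a/2}}[f/\varpi]\big)(\cos\t),\qquad \mathcal P_r[g](x)=\int_{-1}^1 P_r(x,y)\,g(y)\,w_H(y|q)\,dy,
\]
with $P_r(x,y)=\Sum H_n(x|q)H_n(y|q)r^n/(q;q)_n$. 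Expanding $f/\varpi=\sum_n\g_n H_n$ and invoking the orthogonality \eqref{eqorcqH} then yields the diagonal form $T_a\psi_n=\l_n^{\,a}\psi_n$ with $\l_n=(1-q)q^{n/2-1/4}/2$, equivalently $T_af=\tfrac{(1-q)^a}{2^aq^{a/4}}\,\varpi\sum_n q^{an/2}\g_n H_n$.

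Parts (a) and (b) are then short. For (a) I would simply iterate the diagonal form: since $(T_af)/\varpi$ has $H_n$-coefficients $\tfrac{(1-q)^a}{2^aq^{a/4}}q^{an/2}\g_n$, applying $T_b$ multiplies these by $\tfrac{(1-q)^b}{2^bq^{b/4}}q^{bn/2}$, and the two scalar factors combine to $\tfrac{(1-q)^{a+b}}{2^{a+b}q^{(a+b)/4}}q^{(a+b)n/2}$, i.e.\ $T_bT_a=T_{a+b}$. For (b) I would use that $P_r\ge0$ for $0<r<1$ by \eqref{eqPoiKer}, so Theorem \ref{Lem1} applied to the orthonormal polynomials $H_n/\sqrt{(q;q)_n}$ gives $\mathcal P_r[g]\to g$ uniformly on $[-1,1]$ as $r\to1^-$ for each $g\in C[-1,1]$. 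Taking $g=f/\varpi$ and $r=q^{a/2}\to1^-$ as $a\to0^+$, then multiplying by the continuous bounded factor $\varpi$ and using $(1-q)^a/(2^aq^{a/4})\to1$, I obtain $T_af\to f$ uniformly.

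The real content is (c), and it rests on the claim that the $\psi_n$ are eigenfunctions of the Askey--Wilson operator, $\D\psi_n=\l_n^{-1}\psi_n$, that is $\D(\varpi H_n)=\tfrac{2q^{1/4}}{1-q}q^{-n/2}\varpi H_n$. I would prove this at the level of the generating function $G(\t,u):=\varpi(\cos\t)\Sum H_n(\cos\t|q)u^n/(q;q)_n=\varpi(\cos\t)/(ue^{i\t},ue^{-i\t};q)_\infty$ (the last equality is \eqref{eqGFHn}), by establishing the single functional identity
\[
\D\!\left[\frac{\varpi(\cos\t)}{(ue^{i\t},ue^{-i\t};q)_\infty}\right]=\frac{2q^{1/4}}{1-q}\,\frac{\varpi(\cos\t)}{(q^{-1/2}ue^{i\t},q^{-1/2}ue^{-i\t};q)_\infty}
\]
and matching coefficients of $u^n$. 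Writing $z=e^{i\t}$, the shift $z\mapsto q^{1/2}z$ moves the $q^{1/2}$-base product $\varpi$ by one index and telescopes each $q$-base product in the denominator down to a single factor, so that $\breve G(q^{\pm1/2}z)$ equals the right-hand side times an explicit rational function of $z$. Forming the difference quotient $\D\breve G=[\breve G(q^{1/2}z)-\breve G(q^{-1/2}z)]/[(q^{1/2}-q^{-1/2})(z-1/z)/2]$, all $u$-dependent terms cancel; the surviving numerator is a multiple of $(z-1/z)(q^{1/4}+q^{-1/4}+2\cos\t)$, which cancels against the factor $z-1/z$ in the denominator and against $(1+q^{1/4}z)(1+q^{1/4}/z)=q^{1/4}(q^{1/4}+q^{-1/4}+2\cos\t)$, leaving the constant $2q^{1/4}/(1-q)$. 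Granting the eigenrelation, (c) is immediate: for $a>1$ one has $\D T_a\psi_n=\l_n^{\,a}\D\psi_n=\l_n^{\,a-1}\psi_n=T_{a-1}\psi_n$, and since $a-1>0$ the family $T_{a-1}$ is again of the form above.

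I expect the main obstacle to be exactly this eigenvalue identity $\D(\varpi H_n)=\l_n^{-1}\varpi H_n$: the telescoping of the $q^{1/2}$- and $q$-base products under $z\mapsto q^{\pm1/2}z$ and the cancellation of the $u$-terms must be carried out with care, though the special case $u=0$ already gives the pleasant seed identity $\D\varpi=\tfrac{2q^{1/4}}{1-q}\varpi$. The remaining points are routine: interchanging $\D$ with the $u$-series to match coefficients is justified for small $u$ because $\l_n^{-1}$ grows only geometrically while $\sup_{[-1,1]}|H_n(\cdot|q)|$ grows at most polynomially, and the interchange of the Askey--Wilson operator with the defining integral of $T_a$ (used implicitly in passing from the kernel to the diagonal form) is justified by the uniform convergence of the Poisson series for $q^{a/2}<1$.
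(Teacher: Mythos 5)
Your proposal is correct, and for parts (a) and (b) it is essentially the paper's own argument in spectral clothing: the paper likewise recognizes the kernel of $T_a$ in \eqref{eqdefTa} as the $q$-Hermite Poisson kernel \eqref{eqPoiKer} at $r=q^{a/2}$, proves (a) by the kernel-level reproducing identity (the inner integral of the product of two Poisson kernels collapses, via orthogonality and completeness, to the Poisson kernel at $q^{(a+b)/2}$ --- the same orthogonality computation you perform coefficientwise), and proves (b) exactly as you do, by applying Theorem \ref{Lem1} to $f/\varpi$. The genuine differences are organizational and concern (c). First, you diagonalize the whole family at the outset, $T_a(\varpi H_n)=\lambda_n^a\,\varpi H_n$; in the paper this spectral decomposition is not part of the proof of Theorem \ref{thm1} but is established afterwards as Theorem \ref{thm2}, so in effect you prove Theorem \ref{thm2} first and read Theorem \ref{thm1} off it. Second, for (c) the paper offers only the sentence ``the proof of (c) follows by direct computation,'' whereas you actually supply that computation: the eigenrelation $\D(\varpi H_n)=\frac{2q^{1/4}}{1-q}\,q^{-n/2}\,\varpi H_n$, proved at the level of the generating function \eqref{eqGFHn}. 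I checked your cancellation and it is right: putting the difference $\breve G(q^{1/2}z)-\breve G(q^{-1/2}z)$ over the common denominator $(1+q^{1/4}z)(1+q^{1/4}/z)$, the $u$-terms drop out, the numerator is $(1/z-z)\bigl(q^{1/4}+q^{-1/4}+2\cos\t\bigr)$, and $(1+q^{1/4}z)(1+q^{1/4}/z)=q^{1/4}\bigl(q^{1/4}+q^{-1/4}+2\cos\t\bigr)$ cancels it, leaving exactly $\frac{2q^{1/4}}{1-q}$ times the generating function with $u$ replaced by $q^{-1/2}u$. What your route buys: one eigenrelation that simultaneously yields (a), (c), the later Theorem \ref{thm2}, and the infinitesimal generator, plus an honest proof of the part the paper leaves implicit; the price is that (a) and (c) then hold a priori only on the dense span of the $\varpi H_n$, so you must (and do, at the end) justify the termwise application of $\D$ and the passage to general $f$, whereas the paper's kernel-level proof of (a) needs no such density or interchange argument.
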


 \begin{proof}[Proof of $\textup{(a)}$]  It is clear that 
 \begin{eqnarray}
 \notag
 \bg
\left(\frac{2q^{1/4}}{1-q} \right)^{a+b} \frac{(T_a T_bf)(\cos \t)}
{(-q^{1/4} e^{i\t}, -q^{1/4} e^{-i\t};q^{1/2})_\infty } \\
=  (q^a, q^b; q)_\infty \int_{[0,\pi]^2} 
 \frac{ w_H(\cos \f|q)}
 {h(\cos \f; q^{a/2} e^{i\t}, q^{a/2} e^{-i\t})}      \\
 \times  \frac{ w_H(\cos \psi|q) 
 f(\cos \psi) \sin \f \sin \psi } {(-q^{1/4} e^{i\psi}, 
 -q^{1/4} e^{-i\psi};q^{1/2})_\infty
 h(\cos \psi; q^{b/2}e^{i\f}, q^{b/2} e^{-i\f})} \;d\f d\psi.
 \eg
 \end{eqnarray}
 The Poisson kernel \eqref{eqPoiKer} and the completeness 
 of the \cqHp imply 
 \begin{eqnarray}
 \notag
 \bg
 \int_0^\pi \frac{(q^a, q^b; q)_\infty w_H(\cos \f|q)\sin \f \; d\f}
 {h(\cos \psi; q^{b/2}e^{i\f}, q^{b/2} e^{-i\f})
 (\cos \f; q^{a/2} e^{i\t}, q^{a/2} e^{-i\t})} \\
 =  \frac{(q^{a+b};q)_\infty} {h(\cos \psi; q^{(a+b)/2}e^{i\t}, 
 q^{(a+b)/2} e^{-i\t})}
 \eg
 \end{eqnarray}
 and the proof of part (a) is complete.
 \end{proof}
 
  \begin{proof}[Proof of $\textup{(b)}$] For continuous functions 
  $f$, $f(\cos \t)/(-q^{1/4} e^{i\t}, -q^{1/4} e^{-i\t};q^{1/2})_\infty$ is continuous, hence Theorem \ref{Lem1} implies part (b). 
  \end{proof}
  
 The proof of  (c) follows by direct computation.

 It must be noted that the kernel of the integral operator $T_a$ becomes quotients of products of theta functions, \cite{Whi:Wat}, 
 \cite{Rademacher}, when $a = 1$, that is the case of $\D^{-1}$ 
 in \eqref{eq2.1}. So this case is very special in many ways. 
 
 The next theorem describes the eigenvalues of $T_a$ as an 
 operator on $L_2[-1,1, w_H]$. 
 \begin{thm}\label{thm2}
 The only eigenfunctions of $T_a$ as operator $T_a$ acting on 
 $L_2[-1,1, w_H]$ are the functions $(-q^{1/4} e^{i\t}, 
 -q^{1/4} e^{-i\t};q^{1/2})_\infty 
 H_m(\cos \t\vert q)$ with  eigenvalues 
 \begin{eqnarray}
 \label{eqeigenvalues}
 \l_m = \frac{(1-q)^a}{2^aq^{a/4}} q^{ma/2}.
 \end{eqnarray}  
 \end{thm}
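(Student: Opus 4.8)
The plan is to recognize the kernel of $T_a$ as the normalized Poisson kernel \eqref{eqPoiKer} evaluated at $t=q^{a/2}$, and to use this to diagonalize $T_a$ via a similarity transformation. Write $g(\cos\t):=(-q^{1/4}e^{i\t},-q^{1/4}e^{-i\t};q^{1/2})_\infty$ for the factor appearing in \eqref{eqdefTa}. For $0<q<1$ this is a strictly positive continuous function on $[-1,1]$, bounded above and below away from zero: each factor equals $1+2q^{1/4+k/2}\cos\t+q^{1/2+k}\ge(1-q^{1/4+k/2})^2>0$, and the infinite product converges, so $0<c_1\le g\le c_2<\infty$. Consequently multiplication $M_g\colon h\mapsto g\,h$ is a bounded, boundedly invertible operator on $L_2[-1,1,w_H]$, and the spectral data of $T_a$ and of its conjugate will correspond faithfully.

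First I would substitute $t=q^{a/2}$ into \eqref{eqPoiKer}, which converges since $q^{a/2}<1$, to get
\[
\begin{gathered}
\frac{(q^a;q)_\infty}{h(\cos\f;\,q^{a/2}e^{i\t},\,q^{a/2}e^{-i\t})}
=\Sum H_n(\cos\t|q)\,H_n(\cos\f|q)\,\frac{q^{na/2}}{(q;q)_n}.
\end{gathered}
\]
Inserting this into \eqref{eqdefTa} and forming the conjugate $\tilde{T}_a:=M_g^{-1}T_aM_g$, the two factors $g(\cos\f)$ cancel and the front factor $g(\cos\t)$ is removed, leaving
\[
\begin{gathered}
(\tilde{T}_a h)(\cos\t)=\frac{(1-q)^a}{2^aq^{a/4}}\Sum\frac{q^{na/2}}{(q;q)_n}\,H_n(\cos\t|q)\\
\times\int_0^\pi H_n(\cos\f|q)\,h(\cos\f)\,w_H(\cos\f|q)\,\sin\f\,d\f.
\end{gathered}
\]
The interchange of sum and integral is justified by the uniform convergence of the Poisson series for $t=q^{a/2}<1$. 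By the orthogonality relation \eqref{eqorcqH} this gives $\tilde{T}_aH_m=\l_mH_m$ with $\l_m$ as in \eqref{eqeigenvalues}; transporting back through $M_g$ shows that $g\,H_m=(-q^{1/4}e^{i\t},-q^{1/4}e^{-i\t};q^{1/2})_\infty H_m(\cos\t|q)$ is an eigenfunction of $T_a$ with eigenvalue $\l_m$.

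It remains to show these are the \emph{only} eigenfunctions. The \cqHp $\{H_m(\cdot|q)\}$ form a complete orthogonal system in $L_2[-1,1,w_H]$ (the orthogonality measure has compact support, hence is determinate, so the polynomials are dense), and in this basis $\tilde{T}_a$ is a bounded diagonal operator. Because $0<q<1$ and $a>0$, the eigenvalues $\l_m=\frac{(1-q)^a}{2^aq^{a/4}}q^{ma/2}$ are pairwise distinct. Hence if $\tilde{T}_a\psi=\mu\psi$ with $\psi=\sum_m c_mH_m$, matching coefficients yields $c_m(\l_m-\mu)=0$ for every $m$; distinctness forces all but at most one $c_m$ to vanish, so $\psi$ is a scalar multiple of a single $H_m$ and $\mu=\l_m$. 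Applying $M_g$ returns precisely the scalar multiples of $g\,H_m$, completing the argument.

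The step I expect to be the main obstacle is this last exhaustion claim, which rests on two ingredients that must be stated with care: the $L_2$-completeness of the $q$-Hermite basis, and the fact that $M_g$ is a genuine bounded bijection of $L_2[-1,1,w_H]$, so that the diagonal structure of $\tilde{T}_a$ transfers without loss to $T_a$. The convergence and sum–integral interchange for the Poisson series, though routine once $q^{a/2}<1$ is noted, also deserves a brief justification.
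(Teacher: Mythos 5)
Your proof is correct and takes essentially the same route as the paper: both recognize the kernel of $T_a$ as the $q$-Hermite Poisson kernel at $t=q^{a/2}$, use the two-sided positive bounds on $g$ to pass from an eigenfunction $f$ to $f/g$ (your conjugation by $M_g$ is the same device phrased operator-theoretically), and conclude by orthogonality and completeness of the $q$-Hermite basis. If anything, you spell out more carefully than the paper the final step that the eigenvalues $\l_m$ are pairwise distinct, which is what forces every eigenfunction to be a multiple of a single $g\,H_m$.
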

 \begin{proof}
 It is clear that the action of $T_a$ on $(-q^{1/4} e^{i\t}, 
 -q^{1/4} e^{-i\t};q^{1/2})_\infty H_m(\cos \t\vert q)$ is given by 
 \begin{eqnarray}
 \notag
 \bg
 \frac{(1-q)^a}{2^aq^{a/4}}\; (q^a;q)_\infty   \int_0^\pi  
 \frac{(-q^{1/4} e^{i\t}, -q^{1/4} e^{-i\t};q^{1/2})_\infty w_H(\cos \f|q)} 
 {h(\cos \f; q^{a/2}e^{i\t}, q^{a/2} e^{-i\t})} H_m(\cos \f\vert q) \sin \f d\f \\
 =\frac{(1-q)^a}{2^aq^{a/4}} (-q^{1/4} e^{i\t}, -q^{1/4} e^{-i\t};q^{1/2})_\infty \;   \int_{-1}^1 H_m(y|q) 
 w_H(y|q)
 \Sum H_n(x|q)H_n(y|q) \frac{q^{qn/2}}{(q;q)_n} \;dy \\
 = \frac{(1-q)^a}{2^aq^{a/4}} q^{am/2} (-q^{1/4} e^{i\t}, -q^{1/4} e^{-i\t};q^{1/2})_\infty \; H_m(\cos \t\vert q),
 \eg
 \end{eqnarray}
 where we used the orthogonality relation \eqref{eqorcqH} and the 
 Poisson kernel \eqref{eqPoiKer}. This shows that 
 $(-q^{1/4} e^{i\t}, -q^{1/4} e^{-i\t};q^{1/2})_\infty H_m(\cos \t\vert q)$  
 are eigenfunctions. 
 
 It is clear that the function on the right-hand side 
 of \eqref{eqPoiKer} is in 
$L_2[-1,1, w_H]$.  Let $f$ be an eigenfunction 
 with an eignenvalue $\l$. Since  
 $1/(-q^{1/4} e^{i\t}, -q^{1/4} e^{-i\t};q^{1/2})_\infty$ is bounded 
 above  and 
 below by positive numbers,  then $f/(-q^{1/4} e^{i\t}, -q^{1/4} e^{-i\t};q^{1/2})_\infty$ is in 
 $L_2[-1,1, w_H]$, so we set 
 \begin{eqnarray}
 \notag
\frac{ f(\cos \t)}{ (-q^{1/4} e^{i\t}, -q^{1/4} e^{-i\t};q^{1/2})_\infty} 
= \Sum f_n H_n(\cos \t|q).
 \end{eqnarray} 
 The orthogonality and completeness of the \cqHp system, 
 Parseval's theorem and the Poisson kernel \eqref{eqPoiKer}  show that 
 \begin{eqnarray}
 \notag
 \bg
\frac{ \l f(\cos \t)}{ (-q^{1/4} e^{i\t}, -q^{1/4} e^{-i\t};q^{1/2})_\infty} = 
\frac{T_a f(\cos \t)}{(-q^{1/4} e^{i\t}, -q^{1/4} e^{-i\t};q^{1/2})_\infty} \\
 = \frac{(1-q)^a}{2^aq^{a/4}}
 \int_0^\pi  \Sum f_n H_n(\cos \t|q)  \sum_{k=0}^\infty H_k(\cos \t|q) 
 H_k(\cos \f|q) \frac{q^{ka/2}}{(q;q)_k}   \;   w_H(\cos \f|q)   
   \sin \f d\f\\
   = \frac{(1-q)^a}{2^aq^{a/4}}  \Sum f_n   q^{na/2}H_n(\cos \t|q).       . 
 \eg
 \end{eqnarray}
 This shows that the only eigenfunctions are  $(-q^{1/4} e^{i\t}, 
 -q^{1/4} e^{-i\t};q^{1/2})_\infty H_m(\cos \t\vert q)$ and that the 
 corresponding eigenvalues are 
 given by   \eqref{eqeigenvalues}. 
 \end{proof}
 
The operator $T_a$ is the limit of finite rank operators (its restriction 
to the span of $\{H_k(x|q):0 \le k \le n\}$).     
Therefore $T_a$ is compact for any $a >0$ and Hilbert space $L_2[-1,1,w_H]$ is infinite dimensional, and we conclude that the operator $T_a$ is not invertible for any $a >0$.  This is also implied  by the fact that $\l_n \to 0$, hence $\l =0$ is in the spectrum.

  \begin{rem}
  	\label{analytic-continuation}
  	We have the following general comments for $T_a$, Similar remarks apply  to $S_a, F_a$ in later sections:
  	\begin{enumerate} 
  		\item   $T_a$ is defined for all $\Re(a)>0$ in \eqref{eqdefTa}, and parts \rm{(a) and (c)} of Theorem \ref{thm1} remain valid, but $T_a$  may not be positive anymore. 
  \item Let 
  \bea
  \label{eqdefg}
  g(\cos \t) = (-q^{1/4} e^{i\t}, -q^{1/4} e^{-i\t};q^{1/2})_\infty.
  \eea
  For each $f\in L_2[-1,1,w_H]$,  let 
  \bea
  \label{eqdefL}
  Lf=gf, \ L^{-1}f=f/g.
  \eea 
  Since both $g,1/g$ are continuous on $[-1,1]$,  then the operators  $L^{-1}, L$,  are bounded on $L_2[-1,1,w_H]$. By Theorem \ref{thm2}, for  $\Re(a)>0$ the operator 
  		$L^{-1}\, T_a \, L$ has eigen-system $\{\lambda_m, H_m (x\vert q)\}$, hence it is a trace class
  		operator. Consequently, $\{T_a :\Re(a)>0\}$ are trace class operators since trace class is a two sided ideal.
  		\item  As matter of fact, $T_a$ can be defined for each  $a\in \mathbb{C}$  satisfying  $q^{\Re(a)/2+n}\neq 1$ for  $n=0,1,...$.  The obtained operator is still compact since its kernel is continuous on $[-1,1]\times[-1,1]$.  As a result of compactness, none of  $T_a$s is  invertible since the underlying Hilbert space $L_2[-1,1,w_H]$ is infinitely dimensional. By applying \rm{(c)} of Theorem \ref{thm1} 
		we can extend the semigroup law \rm{(a)} to complex cases as far as all the operators involved exists. 
  		\item Another immediate consequence of Theorem \ref{thm1} is that $\D$ is  
  		$T_{-1}$, in the sense that the strong operator limit  $\lim_{a\to 1^+} \D T_a$ 
  		is the identity operator $I$.
  	\end{enumerate} 
  \end{rem}
  We now identify the infinitesimal generator  of $T_a$. From Theorem 
 \ref{thm2} it follows that 
 \begin{eqnarray}
 T_a \left(g(x)H_m(x|q)\right) 
 = \left(\frac{1-q}{2} q^{(2m-1)/4}\right)^a g(x)H_m(x|q)
 \end{eqnarray}
  where $g$ is defined in \eqref{eqdefg}.
  Thus, with $a \to 0$, $I$ the identity operator, we have 
 \begin{eqnarray}
 \notag
  (T_a- I) g(x)H_m(x|q) = a\log\left(\frac{1-q}{2} q^{(2m-1)/4}\right) 
  g(x)H_m(x|q) +R_a, 
  \end{eqnarray}
  where the $L_2$-norm of $R_a$ is of $\O(a^2 m^2 \left \Vert g H_m(\cdot \vert q)\right \Vert)$.
 Then the linear operator $J=\lim _{a\to0} a^{-1}[T_a- I]$ is defined on all finite linear combinations of the polynomials $\{g(x)H_n(x\vert q)\}$. Let $gf \in L_2[-1,1, w_H]$ with $g$ as in \eqref{eqdefg},  since the $q$-Hermite polynomials form an orthogonal basis, then 
  $gf = \Sum c_nH_n(x|q)$. 
  For all $f \in L_2[-1,1, w_H]$  such that
  \begin{eqnarray}
  \label{domain-J}
  f /g= \Sum c_nH_n(x|q),  \\
  \sum _{m=0}^{\infty }m^2 |c_m|^2 (q;q)_m <\infty,  
  \end{eqnarray} 
 it is clear that  the infinitesimal  generator $J$  is defined by 
  \begin{eqnarray}
  \left(Jf\right)(x)  = g(x)\Sum \log\left(\frac{1-q}{2} q^{(2m-1)/4}\right) c_m H_m(x|q). 
  \end{eqnarray}
Now both $g$ and $1/g$ are continuous and uniformly bounded on $[-1,1]$, hence all $f$ satisfy \eqref{domain-J}  form a dense subspace of  $L_2([-1,1], w_H)$. 

 \section{$q$-Analogues of Powers and Exponential Functions}
In \cite{Ism:Sta1} and \cite{Ism:Sta2} Ismail and Stanton  introduced the bases 
 \begin{eqnarray} \label{eqDeffn&rn}
 \bg
 \phi_n(x)  =  (q^{1/4}e^{i\t}, q^{1/4} e^{-i\t};q^{1/2})_n =
 \prod_{k=0}^{n-1} (1- 2xq^{1/4+k/2} + q^{1/2+ k}), \label{eqdefphin1/4}\\
 \rho_n(x) = (1+e^{2i\t}) e^{-in \t}(-q^{2-n} e^{2i\t};q^2)_{n-1}, n>0, \quad \r_0(x):=1,
 \eg
 \end{eqnarray}
 and established $q$-Taylor series expansions of entire functions in these bases. 
 
 It is clear that  we can replace $n$ by a general parameter 
 $\b$ in \eqref{eqDeffn&rn}.
 
 \begin{thm} The action of $T_a$ on  $ \phi_\b(x)$ is give by 
 \begin{eqnarray}
 \label{eqTaobphi}
  T_a\phi_\b(x) = \frac{(1-q)^a}{2^aq^{a/4}}  \frac{(q^{a+\b+1};q)_\infty}{(q^{\b+1};q)_\infty}\;  \phi_{\a+\b}(x). 
 \end{eqnarray} 
 \end{thm}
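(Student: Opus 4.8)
The plan is to reduce everything to the diagonalization of $T_a$ already obtained inside the proof of Theorem~\ref{thm2}. With $g$ as in \eqref{eqdefg} and $Lf=gf$, that proof shows that whenever $f/g=\Sum c_n H_n(\cos\t|q)$ in $L_2[-1,1,w_H]$, one has
\[
\frac{(T_af)(\cos\t)}{g(\cos\t)}=\frac{(1-q)^a}{2^aq^{a/4}}\,\Sum c_n\,q^{na/2}\,H_n(\cos\t|q),
\]
i.e. $L^{-1}T_aL$ is the multiplier $H_n\mapsto \frac{(1-q)^a}{2^aq^{a/4}}\,q^{na/2}H_n$. Since $g,1/g$ are continuous and positive on $[-1,1]$, $\phi_\b/g\in L_2[-1,1,w_H]$ for $\Re\b\ge0$, so the entire theorem is governed by a single object, the continuous $q$-Hermite expansion $\phi_\b/g=\Sum c_n(\b)H_n(\cos\t|q)$, where by \eqref{eqorcqH} $c_n(\b)=\frac{1}{(q;q)_n}\int_{-1}^1 \frac{\phi_\b(x)}{g(x)}H_n(x|q)w_H(x|q)\,dx$.

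The key claim is that the $\b$-dependence of these coefficients factors,
\[
c_n(\b)=\frac{C_n\,q^{n\b/2}}{(q^{\b+1};q)_\infty},\qquad C_n\ \text{independent of}\ \b .
\]
Granting this, the theorem follows by bookkeeping: the multiplier acts as $q^{na/2}c_n(\b)=\frac{(q^{a+\b+1};q)_\infty}{(q^{\b+1};q)_\infty}\,c_n(a+\b)$, whence
\[
\frac{(T_a\phi_\b)(\cos\t)}{g(\cos\t)}=\frac{(1-q)^a}{2^aq^{a/4}}\,\frac{(q^{a+\b+1};q)_\infty}{(q^{\b+1};q)_\infty}\Sum c_n(a+\b)H_n(\cos\t|q)=\frac{(1-q)^a}{2^aq^{a/4}}\,\frac{(q^{a+\b+1};q)_\infty}{(q^{\b+1};q)_\infty}\,\frac{\phi_{a+\b}(\cos\t)}{g(\cos\t)},
\]
which is \eqref{eqTaobphi} after clearing $g$. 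I would pin the constants $C_n$ from the endpoint $\b=0$: there $\phi_0\equiv1$, the two $q^{1/2}$-factorials cancel, and $\phi_0/g=1/g$; writing $\{-q^{1/4},-q^{3/4}\}=\{q^{1/2}e^{i\psi_0},q^{1/2}e^{-i\psi_0}\}$ with $\cos\psi_0=-\tfrac12(q^{1/4}+q^{-1/4})$ turns $1/g$ into the Poisson kernel \eqref{eqPoiKer} with $t=q^{1/2}$ and second argument $\cos\psi_0$, giving $C_n=q^{n/2}H_n(\cos\psi_0|q)/(q;q)_n$.

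The main obstacle is establishing the factorization of $c_n(\b)$ for general, non-integer $\b$. The cleanest formulation I see is to gather all coefficients by the generating function \eqref{eqGFHn}: with $\Phi_\b(r):=\Sum c_n(\b)r^n$, orthogonality turns the sum into
\[
\Phi_\b(r)=\frac{1}{2\pi}\int_0^\pi \frac{\phi_\b(\cos\f)\,(q,e^{2i\f},e^{-2i\f};q)_\infty}{g(\cos\f)\,(re^{i\f},re^{-i\f};q)_\infty}\,d\f ,
\]
and the factorization is exactly equivalent to the closed form
\[
\Phi_\b(r)=\frac{1}{(q^{\b+1};q)_\infty\,(-q^{1/4+\b/2}r;q^{1/2})_\infty}.
\]
I would prove this by writing every $q^{1/2}$-shifted factorial occurring in $\phi_\b$ and in $g$ as a product of two base-$q$ factorials and matching the result against an Askey--Wilson/Nassrallah--Rahman $q$-beta integral. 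The genuine difficulty is the clash of bases: the weight $w_H$, the Poisson kernel and the generating factor $1/(re^{\pm i\f};q)_\infty$ all live in base $q$, while $\phi_\b$ and $g$ are built from base-$q^{1/2}$ factorials carrying the parameters $q^{1/4+\b/2}$, so the integrand is not literally in Askey--Wilson form and the two bases must be reconciled with care.

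Should that mixed-base evaluation prove stubborn, a safer fallback is to prove \eqref{eqTaobphi} first for $\b=k\in\{0,1,2,\dots\}$, where $\phi_k$ is a genuine polynomial of degree $k$. Then $c_n(k)$ is a \emph{finite} sum, computable from the $\b=0$ Poisson-kernel expansion of $1/g$ together with the linearization of $\phi_k\,H_n$ against the continuous $q$-Hermite polynomials, and one checks directly that it equals $C_n q^{nk/2}/(q^{k+1};q)_\infty$. Both sides of \eqref{eqTaobphi} are analytic in $q^{\b}$ and agree along $q^{\b}=q^{k}\to0$, so the identity theorem propagates the formula to all $\b$ with $\Re\b\ge0$, and thence to the full range in which the operators are defined, as in Remark~\ref{analytic-continuation}.
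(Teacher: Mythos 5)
Your proof is correct, but it takes a genuinely different route from the paper's. The paper's proof is a one-step computation: substituting $\phi_\b$ into \eqref{eqdefTa}, the factor $(-q^{1/4}e^{i\f},-q^{1/4}e^{-i\f};q^{1/2})_\infty$ in the kernel's denominator cancels against $\phi_\b(\cos\f)$, leaving $1/(-q^{\b/2+1/4}e^{i\f},-q^{\b/2+1/4}e^{-i\f};q^{1/2})_\infty$; splitting each base-$q^{1/2}$ factorial as $(A;q^{1/2})_\infty=(A;q)_\infty(Aq^{1/2};q)_\infty$ turns the integrand into exactly $w_H(\cos\f|q)\sin\f/h(\cos\f;-q^{\b/2+1/4},-q^{\b/2+3/4},q^{a/2}e^{i\t},q^{a/2}e^{-i\t})$, and the four-parameter Askey--Wilson integral evaluation yields \eqref{eqTaobphi} at once. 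You instead route everything through the diagonalization of $T_a$ (the multiplier $H_n\mapsto\frac{(1-q)^a}{2^aq^{a/4}}q^{na/2}H_n$ on expansions of $f/g$) and reduce the theorem to the factorization $c_n(\b)=C_n q^{n\b/2}/(q^{\b+1};q)_\infty$ of the $q$-Hermite coefficients of $\phi_\b/g$, established through the generating function $\Phi_\b(r)$. Both proofs rest on the same two ingredients --- base splitting and the Askey--Wilson integral (yours in its three-parameter form) --- but yours carries an extra expansion-and-resummation layer and the hypothesis $\phi_\b/g\in L_2$; what it buys is a structural explanation of the shift $\b\mapsto a+\b$, namely that the eigenvalue $q^{na/2}$ meshes exactly with the factor $q^{n\b/2}$ in $c_n(\b)$, which also makes the action of the semigroup law on this family transparent.

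Two further remarks. First, the ``genuine difficulty'' you fear is illusory: after the same base splitting, your $\Phi_\b(r)$ integrand is literally $w_H(\cos\f|q)\sin\f/h(\cos\f;-q^{1/4+\b/2},-q^{3/4+\b/2},r)$, a plain Askey--Wilson integral with one parameter equal to zero, so your primary plan closes cleanly and the fallback via integer $\b$ and analytic continuation is unnecessary; indeed, this splitting is the entire content of the paper's proof. Second, like the paper's own proof, your argument implicitly reads $\phi_\b$ with base point $-q^{1/4}$ rather than the $+q^{1/4}$ written in \eqref{eqDeffn&rn} (otherwise nothing cancels against the kernel and the stated formula is false), and $\phi_{\a+\b}$ in \eqref{eqTaobphi} must be read as $\phi_{a+\b}$; it would strengthen your write-up to say both explicitly.
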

 \begin{proof}
It is clear that  $T_a\phi_\b(\cos \t)$ equals 
\begin{eqnarray}
\notag
\bg
\frac{(1-q)^a}{2^aq^{a/4}}\; (q^a;q)_\infty \\
\times  \int_0^\pi  
 \frac{(-q^{1/4} e^{i\t}, -q^{1/4} e^{-i\t};q^{1/2})_\infty w_H(\cos \f|q)\; \sin \f \;d\f}
 {(-q^{\b/2+1/4} e^{i\f}, -q^{\b/2+1/4} e^{-i\f};q^{1/2})_\infty  h(\cos \f; q^{a/2}e^{i\t}, q^{a/2} e^{-i\t})} \\
= \frac{(1-q)^a}{2^aq^{a/4}}\; (q^a;q)_\infty \\
\times  \int_0^\pi  
 \frac{(-q^{1/4} e^{i\t}, -q^{1/4} e^{-i\t};q^{1/2})_\infty w_H(\cos \f|q)\;  \sin \f \;d\f}
 {h(\cos \f; -q^{\b/2+1/4}, -q^{\b/2+3/4}, q^{a/2}e^{i\t}, q^{a/2} e^{-i\t})}.
 \eg 
\end{eqnarray}
The integral is an Askey-Wilson integral and using the evaluation \cite{And:Ask:Roy}, \cite{Gas:Rah}
\begin{eqnarray}
 \int_0^\pi  
 \frac{w_H(\cos \f|q)}
 {h(\cos \f; a_a,a_2, a_3, a_4)} \sin \f d\f = 
 \frac{(a_1a_2a_3a_4;q)_\infty}{\prod_{1 \le j < k \le 4} 
 (a_ja_k;q)_\infty}.
\end{eqnarray}
 \end{proof}
 We note that \eqref{eqTaobphi} is an analogue of the action if a fractional integral operator on $x^\b$. 
 
 Formula \eqref{eqTaobphi} is very important because we know how to expand entire functions as $\Sum c_n \f_n(x)$. Indeed 
if $f(x)= \Sum f_n \f_n(x)$, then 
 \begin{eqnarray}
T_a \Sum f_n \frac{\f_n(x)}{(q;q)_n} 
=  \frac{(1-q)^a}{2^aq^{a/4}} 
 \Sum  f_n \; \frac{ \phi_{\a+n}(x)}
 {(q;q)_{n+a}}.
 \end{eqnarray}

 Recall the definition of the 
 $q$-exponential function $\E$, which we introduced in \cite{Ism:Zha1},
\begin{eqnarray}
\label{eqqExp}
\bg
\quad  \mc{E}_q(\cos\theta;\alpha) =\frac{\(\alpha^2;q^2\)_\infty}
{\(q\alpha^2;q^2\)_\infty} 
 \Sum\(-ie^{i\theta}q^{(1-n)/2},-ie^{-i\theta}q^{(1-n)/2};q\)_n 
 \frac{(-i\alpha)^n}{(q;q)_n}\,q^{n^2/4}. 
\eg
\end{eqnarray}
In \cite{Ism:Zha1}, it was  shown that, 
\begin{eqnarray}
\label{eqqExinqH}
(qt^2;q^2)_\infty\E(x;t)= \Sum\frac{q^{n^2/4}t^n}{(q;q)_n}H_n(x\,|\, q). 
\end{eqnarray}  
We note that $\E(x;t)$ is a $q$-analogue of $e^{xt}$. 

\begin{thm}
The operators $T_a$ have the properties
\begin{eqnarray}\label{eqTaCurlyE}
\bg
T_a [(-q^{1/4} e^{i\t}, -q^{1/4} e^{-i\t};q^{1/2})_\infty \E(\cos \t;t)]\\
 = \frac{(1-q)^a(q^{a+1}t^2;q^2)_\infty}{2^aq^{a/4}(qt^2;q^2)_\infty} 
 (-q^{1/4} e^{i\t}, -q^{1/4} e^{-i\t};q^{1/2})_\infty  \; \E(\cos \t; q^{a/2}t) 
 \eg
\end{eqnarray}
and 
\begin{eqnarray}
\bg
T_a \left[\frac{(-q^{1/4} e^{i\t}, -q^{1/4} e^{-i\t};q^{1/2})_\infty} 
{(t e^{i\t}, t e^{-i\t};q)_\infty}\right] 
= \frac{(1-q)^a}{2^a q^{a/4}}
\frac{(-q^{1/4} e^{i\t}, -q^{1/4} e^{-i\t};q^{1/2})_\infty} 
{(t q^{a/2}e^{i\t}, t q^{a/2}e^{-i\t};q)_\infty}.
\eg
\end{eqnarray}
\end{thm}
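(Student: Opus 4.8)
The plan is to reduce both identities to the spectral decomposition of $T_a$ established in Theorem \ref{thm2}. Writing $g(\cos\t)$ for the product in \eqref{eqdefg}, that theorem tells us that $T_a$ acts diagonally on the orthogonal basis $\{g(\cos\t)H_m(\cos\t|q)\}$ of $L_2[-1,1,w_H]$, with eigenvalue $\l_m=\frac{(1-q)^a}{2^aq^{a/4}}q^{ma/2}$. Both functions on the left-hand sides are precisely $g$ times a generating function for the \cqHp, so expanding in that basis, applying $T_a$ term by term, and re-summing should deliver the stated answers. The whole argument is therefore: expand, diagonalize, re-sum.

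For the first identity I would start from the expansion \eqref{eqqExinqH}, which gives
\begin{equation}
\notag
g(\cos\t)\,\E(\cos\t;t)=\frac{1}{(qt^2;q^2)_\infty}\Sum\frac{q^{n^2/4}t^n}{(q;q)_n}\,g(\cos\t)H_n(\cos\t|q).
\end{equation}
Applying $T_a$ and using $T_a[g H_n]=\l_n\,g H_n$ pulls out the constant $\frac{(1-q)^a}{2^aq^{a/4}}$ and replaces each $t^n$ by $(q^{a/2}t)^n$. Re-summing by \eqref{eqqExinqH} with $t$ replaced by $q^{a/2}t$, and noting $(q(q^{a/2}t)^2;q^2)_\infty=(q^{a+1}t^2;q^2)_\infty$, yields \eqref{eqTaCurlyE}.

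The second identity is handled identically, now starting from the generating function \eqref{eqGFHn}:
\begin{equation}
\notag
\frac{g(\cos\t)}{(te^{i\t},te^{-i\t};q)_\infty}=\Sum\frac{t^n}{(q;q)_n}\,g(\cos\t)H_n(\cos\t|q).
\end{equation}
The same eigenvalue substitution $t^n\mapsto(q^{a/2}t)^n$, together with a second application of \eqref{eqGFHn} in which $t$ is replaced by $q^{a/2}t$, reproduces the stated formula.

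The only point requiring care is the justification for applying $T_a$ term by term, i.e.\ interchanging $T_a$ with the infinite series. Since $T_a$ is bounded (indeed compact) on $L_2[-1,1,w_H]$, it suffices to verify that each expansion converges in that space. For the $q$-exponential this is immediate, because the Gaussian factor $q^{n^2/4}$ forces the coefficients to decay rapidly; for the generating-function identity the relevant series $\sum_{n}|t|^{2n}/(q;q)_n$ converges for $|t|<1$, after which the identity extends to larger $t$ by analytic continuation (the right-hand side is in fact analytic for $|t|<q^{-a/2}$). This convergence bookkeeping is the main, and only mild, obstacle.
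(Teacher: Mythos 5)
Your proposal is correct and follows essentially the route the paper intends: the paper omits the proof, noting only that it ``uses the orthogonality of the \cqHp and the Poisson kernel'' as in the earlier sections, and your argument---expanding both left-hand sides in the eigenbasis $\{g(\cos\t)H_n(\cos\t|q)\}$ via \eqref{eqqExinqH} and \eqref{eqGFHn}, applying the diagonal action of Theorem \ref{thm2}, and re-summing---is exactly that computation, with Theorem \ref{thm2} serving as the packaged form of the orthogonality-plus-Poisson-kernel step. Your convergence bookkeeping ($L_2$ convergence of the expansions plus boundedness of $T_a$) is a sound way to justify the term-by-term application.
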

 We omit the proof, which uses the orthogonality of the \cqHp and the 
 Poisson kernel and is very similar to the proofs in Section 2. 
 
 Ismail, Stanton, and Viennot \cite{Ism:Sta:Vie} proved that 
 \begin{eqnarray}
 \int_0^\pi \frac{w_H(\cos \t)  \sin \t }{h(\cos \t;  a_1, \cdots, a_k)}\; d\t, 
 \end{eqnarray}
 is essentially the generating function of the crossing numbers of perfect matchings of sets with $ a_1, \cdots, a_k$ as generating function variables. This enables us to evaluate the integrals
 \begin{eqnarray}
 \notag
\bg
T_a \left[\frac{(-q^{1/4} e^{i\t}, -q^{1/4} e^{-i\t};q^{1/2})_\infty} 
{(t e^{i\t}, t e^{-i\t};q)_\infty h(\cos \f; a_1, \cdots, a_k)}\right] 
\eg
\end{eqnarray}
 as a power series in $ a_1, \cdots, a_k$.

Formula \eqref{eqTaCurlyE} has the following  curious implication, 
\bea
\label{eqIntertwine}
\bg
\int_{-1}^1\frac{ \E(x;t) (T_af)(x) w_H(x|q)}{(-q^{1/4} e^{i\t}, -q^{1/4} e^{-i\t};q^{1/2})_\infty}\; dx \qquad \\
= \frac{(1-q)^a\; (q^{a+1}t^2;q^2)_\infty}{2^aq^{a/4}\; (qt^2;q^2)_\infty}
 \int_{-1}^1 \frac{\E(x,tq^{a/2}) f(x) w_H(x|q)}{(-q^{1/4} e^{i\t}, -q^{1/4} e^{-i\t};q^{1/2})_\infty}\, dx.
\eg
\eea
\begin{proof}
The left-hand side of \eqref{eqIntertwine} is 
\bea
\notag
\bg
 \frac{(1-q)^a}{2^aq^{a/4}}\; (q^a;q)_\infty \int_0^\pi \int_0^\pi  
 \E(\cos \t;t) 
 w_H(\cos \t|q) \sin \t \\
 \times 
 \frac{ w_H(\cos \f|q)\; f(\cos \f)\; \sin \f}
 {(-q^{1/4} e^{i\f}, -q^{1/4} e^{-i\f};q^{1/2})_\infty  h(\cos \f; q^{a/2}e^{i\t}, q^{a/2} e^{-i\t})} \; d\f d\t \\
 =   \int_0^\pi \frac{ f(\cos \f)\,  w_H(\cos \f|q)  \sin \f}{
(-q^{1/4} e^{i\f}, -q^{1/4} e^{-i\f};q^{1/2})^2_\infty} \\
\times 
(T_a  (-q^{1/4} e^{it}, -q^{1/4} e^{-it};q^{1/2})_\infty \E(\cos \t;t))
(\cos \f)  d\f \\
= \frac{(1-q)^a\; (q^{a+1}t^2;q^2)_\infty}{2^aq^{a/4}\; (qt^2;q^2)_\infty} 
\int_{-1}^1  \frac{\E(x,tq^{a/2}) f(x) w_H(x|q)}{(-q^{1/4} e^{i\t}, -q^{1/4} e^{-i\t};q^{1/2})_\infty}\, dx. 
\eg
\eea
This completes the proof. 
\end{proof}
 
 We shall revisit formulas of the type \eqref{eqIntertwine} in 
 Section 9.

 \section{Adjoints and Inversion} We realized that the Riemann-Liouville and the 
 Weyl fractional integral operators are adjoints with respect to the inner 
 product $(f,g) = \int_0^\infty f(x) \overline{g(x)} \, dx$.  
 
 We define the inner product 
 \begin{eqnarray}
(f,g) =  \int_{-1}^1 f(x) \overline{g(x)} w_H(x|q) dx.  
 \end{eqnarray}
 It is easy to see that the adjoint of $T_a$ is given by
  \begin{eqnarray}\notag 
 \bg
 (T^*_a  f)(\cos \t) = \frac{(1-q)^a}{2^aq^{a/4}}\; (q^a;q)_\infty \\
 \times \int_0^\pi  
 \frac{(-q^{1/4} e^{i\f}, -q^{1/4} e^{-i\f};q^{1/2})_\infty w_H(\cos \f|q) 
 f(\cos \f)}
 {(-q^{1/4} e^{i\t}, -q^{1/4} e^{-i\t};q^{1/2})_\infty  
 h(\cos \f; q^{a/2}e^{i\t}, q^{a/2} e^{-i\t})} \sin \f \; d\f, 
 \eg
 \end{eqnarray}
 where we have applied the symmetry 
\begin{equation} \notag
h(\cos\theta;q^{a/2}e^{i\phi},q^{a/2}e^{-i\phi})=h(\cos\phi;q^{a/2}e^{i\theta},q^{a/2}e^{-i\theta}).\label{eq:5.3}
\end{equation}
 We shall use the operators $S_a$,
  \begin{eqnarray}
 \bg
 (S_a  f)(\cos \t) = \frac{(1-q)^a}{2^a q^{a/4}}\; (q^a;q)_\infty \\
 \times \int_0^\pi  
 \frac{(-q^{1/4} e^{i\f}, -q^{1/4} e^{-i\f};q^{1/2})_\infty w_H(\cos \f|q) 
 f(\cos \f)}
 {(-q^{1/4} e^{i\t}, -q^{1/4} e^{-i\t};q^{1/2})_\infty  
 h(\cos \f; q^{a/2}e^{i\t}, q^{a/2} e^{-i\t})} \sin \f \; d\f. 
 \eg
 \end{eqnarray}
 
  \begin{thm}\label{thm4}
 The family of operators  $\{S_a: a >0\}$ has the following properties. 
 
\noindent  $\textup{(a)}$ $\{S_a: a >0\}$ is  a semigroup.

\noindent  $\textup{(b)}$ $S_a$ tends to the identity operator 
as $a \to 0^+$. 

\noindent $\textup{(c)}$  For $a>1$ we have the property  $\mathcal{C}_q S_a = S_{a-1}$  where
$\mathcal{C}_q$ is defined by
\bea
\label{c_q}
(\mathcal{C}_q\;f)(x)=2\frac{\breve{f}(q^{1/2}z) -z^4 \breve{f}(q^{-1/2}z)}{(1-q)(1-z^2)z}.
\eea
 \end{thm}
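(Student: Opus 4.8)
The plan is to dispose of (a) and (b) quickly by dualizing the already‑proved statements for $T_a$, and then to concentrate on (c). For (a), since $S_a=T_a^{*}$ by construction, I would take adjoints in the semigroup law of Theorem \ref{thm1}(a): from $T_bT_a=T_{a+b}$ one gets $S_aS_b=(T_bT_a)^{*}=T_{a+b}^{*}=S_{a+b}$, commutativity being inherited from that of the $T$'s. (The identity also follows verbatim from the Poisson‑kernel computation used for Theorem \ref{thm1}(a), since the $S_a$‑kernel differs from the $T_a$‑kernel only in carrying the factor $g$ on the opposite variable.) For (b) I would repeat the Korovkin argument: with $x=\cos\t$, $y=\cos\f$ and the factor $1/g(x)$ pulled outside, $S_af$ becomes $\frac{(1-q)^a}{2^aq^{a/4}}\,g(x)^{-1}\int_{-1}^1 P_{q^{a/2}}(x,y)\,g(y)f(y)\,w_H(y|q)\,dy$, where $P_r$ is the normalized \cqHp Poisson kernel read off from \eqref{eqPoiKer}. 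Applying Theorem \ref{Lem1} to the continuous function $gf$, and using that the prefactor tends to $1$ while $1/g$ is bounded, yields $S_af\to f$ uniformly on $[-1,1]$, hence in the strong sense.

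The core is (c). The mechanism is that $S_a=T_a^{*}$ is diagonalized by the biorthogonal system $\{H_m(x|q)/g(x)\}$: reading the computation of Theorem \ref{thm2} on the adjoint side gives $S_a\big(H_m(\cdot|q)/g\big)=\lambda_m\,H_m(\cdot|q)/g$ with $\lambda_m=\frac{(1-q)^a}{2^aq^{a/4}}q^{ma/2}=\big(\tfrac{1-q}{2}q^{(2m-1)/4}\big)^a$. Thus $S_a$ and $S_{a-1}$ are both diagonal in this basis, and $\mathcal{C}_qS_a=S_{a-1}$ will follow once I establish that $\mathcal{C}_q$ is diagonal in the same basis with the eigenvalue ratio
\be
\mathcal{C}_q\Big(\frac{H_m(\cdot|q)}{g}\Big)=\mu_m\,\frac{H_m(\cdot|q)}{g},\qquad \mu_m=\frac{\lambda_m^{(a-1)}}{\lambda_m^{(a)}}=\frac{2\,q^{1/4-m/2}}{1-q}.
\ee
Indeed, then $\mathcal{C}_qS_a(H_m/g)=\lambda_m^{(a)}\mu_m\,(H_m/g)=\lambda_m^{(a-1)}(H_m/g)=S_{a-1}(H_m/g)$, and the relation extends off the dense linear span of these eigenfunctions.

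To prove the displayed eigenrelation I would first record how $g$ behaves under the two shifts built into $\mathcal{C}_q$. Telescoping a single factor in $\breve g(z)=(-q^{1/4}z,-q^{1/4}/z;q^{1/2})_\infty$ gives
\be
\breve g(q^{1/2}z)=q^{-1/4}z^{-1}\,\breve g(z),\qquad \breve g(q^{-1/2}z)=q^{-1/4}z\,\breve g(z).
\ee
Substituting these into the definition \eqref{c_q} of $\mathcal{C}_q$ applied to $\psi_m:=H_m(\cdot|q)/g$ turns the numerator into $\frac{q^{1/4}z}{\breve g(z)}\big[\breve H_m(q^{1/2}z)-z^2\breve H_m(q^{-1/2}z)\big]$, so that everything reduces to the \cqHp functional equation
\be
\breve H_m(q^{1/2}z)-z^2\,\breve H_m(q^{-1/2}z)=q^{-m/2}\,(1-z^2)\,\breve H_m(z);
\ee
the factors $(1-z^2)z$ in the denominator of \eqref{c_q} then cancel and the multiplier $\mu_m=2q^{1/4-m/2}/(1-q)$ drops out. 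The functional equation itself I would prove via the generating function: multiplying by $t^m/(q;q)_m$, summing, and invoking \eqref{eqGFHn} converts it into the elementary product identity $\frac{1}{(sqz,s/z;q)_\infty}-\frac{z^2}{(sz,sq/z;q)_\infty}=\frac{1-z^2}{(sz,s/z;q)_\infty}$ with $s=q^{-1/2}t$, which follows by peeling off the single factors $(1-sz)$ and $(1-s/z)$.

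The main obstacle is exactly this functional equation, together with the bookkeeping of the $z$‑powers: one must see that the apparently unmotivated weight $z^4$ in \eqref{c_q} is precisely what combines the two $g$‑shift factors $q^{-1/4}z^{\mp1}$ with the $H_m$‑shifts into the clean multiplier $q^{-m/2}(1-z^2)$. Everything else reduces to transcriptions of the $T_a$ arguments. A minor secondary point is to specify the domain on which $\mathcal{C}_qS_a=S_{a-1}$ is asserted: since $\{H_m/g\}$ is dense and $S_{a-1}$ is bounded, the identity is first verified on their finite span and then read on the natural domain where $\mathcal{C}_qS_a$ is defined.
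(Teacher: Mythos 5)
Your proposal is correct, and its key computations check out: peeling one factor off each of the two products in $\breve g(z)=(-q^{1/4}z,-q^{1/4}/z;q^{1/2})_\infty$ gives exactly $\breve g(q^{1/2}z)=q^{-1/4}z^{-1}\breve g(z)$ and $\breve g(q^{-1/2}z)=q^{-1/4}z\,\breve g(z)$; the functional equation $\breve H_m(q^{1/2}z)-z^2\breve H_m(q^{-1/2}z)=q^{-m/2}(1-z^2)\breve H_m(z)$ does follow from \eqref{eqGFHn} by the one-line product identity you state; and the resulting multiplier $\mu_m=2q^{1/4-m/2}/(1-q)$ is precisely $\lambda_m^{(a-1)}/\lambda_m^{(a)}$. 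For parts (a) and (b) you do what the paper does (adjoints of Theorem \ref{thm1}(a), and Korovkin via Theorem \ref{Lem1} applied to the continuous function $gf$). For part (c), however, your route is genuinely different from the paper's, which offers no details at all: it says (c) is proved "similar to" Theorem \ref{thm1}(c), whose own proof is the phrase "direct computation" --- i.e., apply $\mathcal{C}_q$ in the variable $\t$ directly to the kernel $1/[\breve g(z)\,h(\cos\f;q^{a/2}z,q^{a/2}/z)]$ of $S_a$ and recognize, via the same product-shift manipulations, the kernel of $S_{a-1}$. Your spectral argument --- diagonalizing $\mathcal{C}_q$ on the eigenbasis $\{H_m/g\}$ and matching eigenvalue ratios --- buys several things: it explains why the weight $z^4$ in \eqref{c_q} is forced, it yields the spectral action of $\mathcal{C}_q$ itself (which feeds directly into Theorem \ref{thm5} and the inversion formula in Theorem \ref{invTaSa}(b)), and it reduces all the $q$-shift bookkeeping to a single generating-function identity. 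What the paper's kernel route buys instead is that $\mathcal{C}_qS_a=S_{a-1}$ then holds pointwise for every $f$ for which the integrals converge (e.g.\ all of $C[-1,1]$), with no density step; in your version the extension off the finite span of $\{H_m/g\}$ is the one soft spot, since passing the unbounded operator $\mathcal{C}_q$ through the expansion $S_af=\sum_m f_m\lambda_m^{(a)}H_m/g$ requires a justification (for $a>1$ the coefficients decay geometrically, so the series converges in a region where termwise application of $\mathcal{C}_q$ is legitimate), which you assert rather than prove. At the level of rigor of the paper this is a minor omission, not a flaw in the approach.
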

 \begin{proof}
It is clear that $T^*_a$ and $T^*_b$ commute since $T_a$ and  $T_b$ 
commute. More over $S_a S_b = S_{a+b}$ follows from 
part (a) of Theorem \ref{thm1} and the property of adjoints.  The 
remaining parts can be proved similar to the proofs of the corresponding parts in 
Theorem \ref{thm1}.   
 \end{proof} 
 
  One can similarly prove Theorem \ref{thm5} below. 
  \begin{thm}\label{thm5}
 The only eigenfunctions of $S_a$ as operator $S_a$ acting on 
 $L_2[-1,1, w_H]$ are the functions 
 $H_m(\cos \t\vert q)/(-q^{1/4} e^{i\t}, 
 -q^{1/4} e^{-i\t};q^{1/2})_\infty$ with  eigenvalues 
 \begin{eqnarray}
 \label{eqeigenvaluesS}
 \l_m = \frac{(1-q)^a}{2^aq^{a/4}} q^{ma/2}.
 \end{eqnarray}  
 Moreover the eigenfunctions form a basis for $L_2 [-1,1,w_H]$. 
 \end{thm}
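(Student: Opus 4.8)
The plan is to follow the proof of Theorem~\ref{thm2} almost verbatim, with the numerator and denominator roles of the factor $g$ interchanged, and then to supply a short functional-analytic argument for the basis assertion. First I would verify directly that each $H_m(\cos\theta\vert q)/g(\cos\theta)$ is an eigenfunction. Taking $f=H_m/g$ in the definition of $S_a$, the factor $g(\cos\phi)$ in the numerator of the kernel cancels the $1/g(\cos\phi)$ coming from $f$, so the integrand reduces to $H_m(\cos\phi\vert q)$ against the remaining kernel. I would then recognize
\begin{eqnarray}
\notag
\frac{(q^a;q)_\infty}{h(\cos\phi; q^{a/2}e^{i\theta}, q^{a/2}e^{-i\theta})} = \Sum H_n(\cos\theta\vert q)\,H_n(\cos\phi\vert q)\,\frac{q^{na/2}}{(q;q)_n}
\end{eqnarray}
as the Poisson kernel \eqref{eqPoiKer} with $t=q^{a/2}$, and invoke the orthogonality relation \eqref{eqorcqH} to collapse the sum to its $m$-th term. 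This yields the eigenvalue $\lambda_m$ of \eqref{eqeigenvaluesS} and confirms $S_a(H_m/g)=\lambda_m\,H_m/g$.

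Next I would show that these are the only eigenfunctions. Since $g$ and $1/g$ are continuous and hence bounded above and below by positive constants on $[-1,1]$, any eigenfunction $f$ has $gf\in L_2[-1,1,w_H]$, so I may write $g(\cos\theta)f(\cos\theta)=\Sum f_n H_n(\cos\theta\vert q)$. Substituting this expansion and the Poisson-kernel form of the kernel into $S_a f$ and using \eqref{eqorcqH} gives $S_a f=\Sum \lambda_n f_n\,H_n/g$. Comparing this with the eigenvalue equation $S_a f=\lambda f=\lambda\Sum f_n\,H_n/g$ forces $(\lambda-\lambda_n)f_n=0$ for every $n$. Because $0<q<1$ and $a>0$, the numbers $\lambda_n$ are strictly decreasing in $n$, hence pairwise distinct; therefore at most one $f_n$ is nonzero and $f$ is a scalar multiple of a single $H_m/g$, establishing the first assertion.

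For the basis claim I would use the operators $L,L^{-1}$ of \eqref{eqdefL}. Since $g$ is real, continuous, and strictly positive on the compact interval $[-1,1]$, the multiplication operator $L^{-1}\colon f\mapsto f/g$ is bounded with bounded inverse $L$, i.e.\ a linear isomorphism of $L_2[-1,1,w_H]$ onto itself. The continuous $q$-Hermite polynomials $\{H_m(\cdot\vert q)\}$ form a complete orthogonal system by \eqref{eqorcqH}, and a bounded invertible operator carries a complete system to a complete system (indeed to a Riesz basis). Applying $L^{-1}$ to $\{H_m\}$ produces exactly the eigenfunctions $\{H_m/g\}$, which are therefore a basis for $L_2[-1,1,w_H]$.

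The one genuinely new point beyond transcribing Theorem~\ref{thm2} is this basis assertion. I would emphasize that the eigenfunctions $H_m/g$ are \emph{not} orthogonal with respect to $w_H$, since the inner product $(H_m/g,H_n/g)$ carries an extra weight $1/g^2$; completeness therefore cannot be read off from orthogonality as it can for the $H_m$ themselves. The main, though routine, obstacle is thus verifying that $L^{-1}$ is a topological isomorphism, which reduces to the compactness of $[-1,1]$ together with the positivity and continuity of $g$. Once that is in place, the image-of-a-basis argument completes the proof.
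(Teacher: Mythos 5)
Your proposal is correct and takes essentially the approach the paper intends: the paper gives no separate argument for Theorem~\ref{thm5} beyond saying it "can be proved similarly" to Theorem~\ref{thm2}, and your eigenvalue computation (cancellation of $g(\cos\phi)$, Poisson kernel at $t=q^{a/2}$, orthogonality) together with the expansion argument forcing $(\lambda-\lambda_n)f_n=0$ is exactly that proof with $g$ moved from numerator to denominator. Your explicit justification of the basis claim via the bounded invertible multiplication operator is the same mechanism the paper later invokes implicitly (through $K$, the relation $S_a=K\,T_a\,K^{-1}$, and the observation $g^{m}\left(L_2[-1,1,w_H]\right)=L_2[-1,1,w_H]$), so it is welcome added detail rather than a genuinely different route.
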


 Let $g$ be as in \eqref{eqdefg} and 
for any $f\in L_2 [-1,1,w_H]$, let
\begin{equation}
(Kf)(\cos\phi)=\frac{f(\cos\phi)}{g^{2}(\cos\phi)}.\label{eq:5.4}
\end{equation}
It is clear that $K$ is invertible and 
\begin{equation}
(K^{-1}f)(\cos\phi)=g^{2}(\cos\phi)f(\cos\phi).\label{eq:5.5}
\end{equation}
 It is clear that 
 \bea
 \label{eqS=kT}
 S_a =   K \;T_a \; K^{-1}.
 \eea
\begin{thm}
Let $J_{t}$ and  $J_{s}$ be the infinitesimal generators of $T_{a}$ and    
  $S_{a}$, respectively  Then
\begin{equation}
J_{s}=K\; J_{t}\;K^{-1}.     
\label{eq:5.7}
\end{equation}
 Moreover, for any $h\in L_{2}[-1,1,w_{H}]$ such that 
\begin{equation}
g h=\sum_{n=0}^{\infty}c_{n}H_{n}(x\vert q),\quad 
\textup{and} \quad \sum_{n=1}^{\infty}n^{2}\left|c_{n}\right|^{2}(q;q)_{n}<\infty,\label{eq:5.8}
\end{equation}
 then
\begin{equation}
\left(J_{s}h\right)(x)=\sum_{n=0}^{\infty}\log\left(\frac{1-q}{2}q^{(2n-1)/4}\right)c_{n}\frac{H_{n}(x\vert q)}{g(x)}.\label{eq:5.9}
\end{equation}
\end{thm}

\begin{proof}
Since $g(x),\ 1/g(x)$ are continuous on $[-1,1]$, then for any $m\in\mathbb{Z}$
we have
\[
g^{m}\left(L_2[-1,1,w_{H}]\right)=L_2[-1,1,w_{H}].
\]
Then itt is clear that
\[
J_{s}=K\; J_{t}\; K^{-1}.
\]
 Now take  $f \in L_2[-1,1,w_{H}]$ where 
\[
\left(gf\right)(x)=\sum_{n=0}^{\infty}c_{n}H_{n}(x\vert q),\quad\textup{and}\quad  \sum_{n=1}^{\infty}n^{2}\left|c_{n}\right|^{2}(q;q)_{n}<\infty
\]
hold. 
 Since
\[
J_{t}\left(gf\right)(x)=g(x)\sum_{n=0}^{\infty}\log\left(\frac{1-q}{2}q^{(2n-1)/4}\right)c_{n}H_{n}(x),
\]
then 
\[
J_{s}\left(f/g\right)=J_{t}\left(gf\right)/g^{2}=\sum_{n=0}^{\infty}\log\left(\frac{1-q}{2}q^{(2n-1)/4}\right)c_{n}H_{n}/g.
\]
 The proof is will be complete after letting 
\[
h=f/g,f=gh.
\]
\end{proof}
\begin{cor}
For $0<q<1$ we have
\begin{equation}
\lim_{y\to\infty}\log q^{-y}\int_{0}^{\infty}\left(\frac{2q^{1/4}}{1-q}\right)^{a}q^{ay}T_{a}da=I\label{eq:5.10}
\end{equation}
and
\begin{equation}
\lim_{y\to\infty}\log q^{-y}\int_{0}^{\infty}\left(\frac{2q^{1/4}}{1-q}\right)^{a}q^{ay}S_{a}da=I\label{eq:5.11}
\end{equation}
 in strong operator topology.
\end{cor}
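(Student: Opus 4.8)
The plan is to diagonalize the integrand through the similarity transform $L$ of Remark \ref{analytic-continuation} and reduce both \eqref{eq:5.10} and \eqref{eq:5.11} to a single scalar computation. Write $c=2q^{1/4}/(1-q)$, so that the eigenvalues \eqref{eqeigenvalues} of $T_a$ read $\lambda_m=c^{-a}q^{ma/2}$ and the integrand in \eqref{eq:5.10} is $c^a q^{ay}T_a$. First I would check that for each fixed $y>0$ the operator-valued integral $\int_0^\infty c^a q^{ay}T_a\,da$ converges as a norm (Bochner) integral. By Remark \ref{analytic-continuation} the operator $\tilde T_a:=L^{-1}T_a L$ is diagonal on the orthonormal basis $\{H_m(\cdot\vert q)/\sqrt{(q;q)_m}\}$ with eigenvalues $\lambda_m$, so its norm equals $\sup_m\lambda_m=\lambda_0=c^{-a}$ because $q^{ma/2}\le 1$. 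Hence $\|T_a\|\le \|L\|\,\|L^{-1}\|\,c^{-a}$ and
\[
\|c^a q^{ay}T_a\|\le \|L\|\,\|L^{-1}\|\,q^{ay},
\]
which is integrable on $(0,\infty)$ since $0<q<1$ and $y>0$. Thus $A_y:=\log q^{-y}\int_0^\infty c^a q^{ay}T_a\,da$ is a well-defined bounded operator.

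Next I would conjugate by $L$. Since $L,L^{-1}$ are bounded they pass through the Bochner integral, so $\tilde A_y:=L^{-1}A_y L=\log q^{-y}\int_0^\infty c^a q^{ay}\tilde T_a\,da$ is diagonal on $\{H_m\}$ with eigenvalues
\[
\mu_m(y)=\log q^{-y}\int_0^\infty c^a q^{ay}\lambda_m\,da=\log q^{-y}\int_0^\infty q^{a(y+m/2)}\,da=\frac{y}{y+m/2},
\]
the scalar integral converging precisely because $y+m/2>0$. Thus $0<\mu_m(y)\le 1$, so $\|\tilde A_y\|=\mu_0(y)=1$ uniformly in $y$, while $\mu_m(y)\to 1$ as $y\to\infty$ for each fixed $m$. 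The strong convergence $\tilde A_y\to I$ then follows by dominated convergence over the counting measure: for $h=\sum_{m=0}^\infty \hat h_m H_m/\sqrt{(q;q)_m}$ one has
\[
\|\tilde A_y h-h\|^2=\sum_{m=0}^\infty |\mu_m(y)-1|^2\,|\hat h_m|^2\le \sum_{m=0}^\infty |\hat h_m|^2=\|h\|^2<\infty,
\]
and each summand tends to $0$, so the sum does too. Undoing the similarity, $A_y f-f=L(\tilde A_y-I)L^{-1}f$, whence $\|A_y f-f\|\le \|L\|\,\|(\tilde A_y-I)L^{-1}f\|\to 0$, which is \eqref{eq:5.10}.

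For \eqref{eq:5.11} I would use \eqref{eqS=kT} together with the fact that, by \eqref{eq:5.4} and \eqref{eqdefL}, the operator $K$ is $L^{-2}$. Then $L\,S_a\,L^{-1}=L^{-2}\cdot L^{-1}T_a L\cdot L^{2}=L^{-1}T_aL=\tilde T_a$ is the same diagonal operator with eigenvalues $\lambda_m$, so conjugating the $S_a$-integral by $L$ produces exactly the operator $\tilde A_y$ studied above. Writing $B_y$ for the left-hand side of \eqref{eq:5.11}, we thus have $L\,B_y\,L^{-1}=\tilde A_y\to I$ strongly, so that $\|B_y f-f\|=\|L^{-1}(\tilde A_y-I)Lf\|\to 0$, which is \eqref{eq:5.11}.

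The step I expect to be the main obstacle is the passage from the pointwise convergence $\mu_m(y)\to 1$ of the eigenvalues to strong convergence of the operators; this is exactly where the uniform bound $\|\tilde A_y\|=1$ (equivalently $0<\mu_m(y)\le 1$ for all $m$ and $y$) is indispensable, since it supplies the summable dominating function needed for the interchange of limit and sum. Verifying that the operator-valued integral converges in norm is a secondary technical point, settled by the estimate $\|T_a\|=O(c^{-a})$ coming from the diagonalization.
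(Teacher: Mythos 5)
Your proof is correct and takes essentially the same route as the paper: both diagonalize on the $q$-Hermite eigensystem (the paper by expanding $f=g\sum c_nH_n$ and $f=\sum f_nH_n/g$, you by conjugating with $L$), reduce to the scalar computation $\log q^{-y}\int_0^\infty q^{a(y+m/2)}\,da=y/(y+m/2)\to 1$, and pass to the limit coefficientwise. The only difference is that you spell out the Bochner-integrability bound $\|c^aq^{ay}T_a\|=\O(q^{ay})$ and the dominated-convergence step with the uniform bound $0<\mu_m(y)\le 1$, which the paper compresses into the phrase ``the corollary follows by taking limits.''
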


\begin{proof}
Since $\left\{ \frac{(1-q)^{a}}{2^{a}q^{a/4}}q^{ma/2},\ g(x)H_{m}(x\vert q)\right\} _{m=0}^{\infty}$
form a complete eigensystem for $T_{a}$, then the system $\left\{ \frac{(1-q)^{a}}{2^{a}q^{a/4}}q^{ma/2},\ H_{m}(x\vert q)/g(x)\right\} _{m=0}^{\infty}$
is  a complete eigensystem for $S_{a}$, hence $f\in L_2 [-1,1,w_{H}]$
implies 
\[
f(x)=g(x)\sum_{n=0}^{\infty}c_{n}H_{n}(x\vert q),\ f(x)=\sum_{n=0}^{\infty}f_{n}H_{n}(x\vert q)/g(x)
\]
and
\[
\sum_{n=0}^{\infty}|c_{n}|^{2}<\infty,\quad\sum_{n=0}^{\infty}|f_{n}|^{2}<\infty
\]
such that
\[
T_{a}f=\frac{(1-q)^{a}}{2^{a}q^{a/4}}g(x)\sum_{n=0}^{\infty}c_{n}q^{ma/2}H_{n}(x\vert q)
\]
and 
\[
S_{a}f=\frac{(1-q)^{a}}{2^{a}q^{a/4}}\sum_{n=0}^{\infty}f_{n}q^{ma/2}H_{n}(x\vert q)/g(x).
\]
 Then for $y>0$ we have
\[
\int_{0}^{\infty}\left(\frac{2q^{1/4}}{1-q}\right)^{a}q^{ay}\left(T_{a}f\right)(x)da=\frac{g(x)}{\log q^{-1}}\sum_{n=0}^{\infty}\frac{c_{n}}{y+m/2}H_{n}(x\vert q)
\]
and
\[
\int_{0}^{\infty}\left(\frac{2q^{1/4}}{1-q}\right)^{a}q^{ay}\left(S_{a}f\right)(x)da=\frac{1}{\log q^{-1}}\sum_{n=0}^{\infty}\frac{f_{n}}{y+m/2}H_{n}(x\vert q)/g(x).
\]
Then the corollary follows by taking limits. 
\end{proof}

 We next record the inversion formulas for both $T_a$ and $S_a$. 
 Let $\lfloor{a}\rfloor$ and $\{a\}$ denote the integer and fractional parts of $a$, respectively. 
 
 \begin{thm} \label{invTaSa}
	The left-inversion formulas of $T_a$ and $S_a$ are given by \\
	$\textup{(a)}$ $f(x) = (\D^{1+\lfloor{a}\rfloor}T_{1-\{a\}} g)(x)$
	if   $g(x) = (T_af)(x)$.\\
	$\textup{(b)}$ $f(x) = ( \mathcal{C}_q^{\lfloor{a}\rfloor+1}S_{1-\{a\}}g)(x)$
	if  $g(x) = (S_af)(x)$. 
\end{thm}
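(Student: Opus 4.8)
The plan is to derive both inversion formulas from the structural identities already in hand, with no integral computation: the semigroup laws (Theorem \ref{thm1}(a) and Theorem \ref{thm4}(a)), the parameter-lowering relations $\D T_a = T_{a-1}$ and $\mathcal{C}_q S_a = S_{a-1}$ (Theorem \ref{thm1}(c), Theorem \ref{thm4}(c)), and the normalizations $T_0 = S_0 = I$. Throughout I write $n = \lfloor a\rfloor$ and $\{a\} = a - n$, so that $a = n + \{a\}$ with $0 \le \{a\} < 1$; the whole point of splitting the parameter this way is that $1 - \{a\}$ together with $a$ reassembles into the integer $n+1$.

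For part (a) I would start from $g = T_a f$, apply $T_{1-\{a\}}$, and collapse the fractional parts with the semigroup law:
\[
T_{1-\{a\}}\, g = T_{1-\{a\}}\, T_a\, f = T_{(1-\{a\})+a}\, f = T_{n+1}\, f .
\]
Since the parameter $n+1$ is now a positive integer, I would then apply $\D^{1+n}$ and strip off one unit of the parameter at each step via $\D T_b = T_{b-1}$:
\[
\D^{1+n}\, T_{n+1}\, f = \D^{n}\bigl(\D\, T_{n+1}\bigr) f = \D^{n}\, T_{n}\, f = \cdots = \D\, T_1\, f = T_0\, f = f ,
\]
which is exactly $f = (\D^{1+\lfloor a\rfloor} T_{1-\{a\}} g)(x)$. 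For part (b) the argument is word-for-word the same with $T$ replaced by $S$ and $\D$ by $\mathcal{C}_q$: one gets $S_{1-\{a\}} g = S_{n+1} f$ and then $\mathcal{C}_q^{\,n+1} S_{n+1} f = f$.

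The hard part will be the very last step of the telescoping, namely $\D T_1 = T_0 = I$ (and likewise $\mathcal{C}_q S_1 = I$), because the lowering relations are stated only for $a > 1$. This boundary identity is precisely the content of Remark \ref{analytic-continuation}(4), where $\lim_{a \to 1^+} \D T_a = I$ in the strong operator topology, so I would invoke that to close the gap. I would also be careful to emphasize that these are genuinely \emph{left} inverses only: each $T_a$ is compact with eigenvalues $\lambda_m \to 0$, hence injective but not surjective, so the composite on the right reproduces $f$ only when it acts on an element of the range, i.e.\ on $g = T_a f$. To make the iterated applications of $\D$ (resp.\ $\mathcal{C}_q$) rigorous I would verify the identity eigenfunction-by-eigenfunction on the complete system $\{g(x) H_m(x|q)\}$ of Theorem \ref{thm2} (resp.\ $\{H_m(x|q)/g(x)\}$ of Theorem \ref{thm5}), where $\D^{n+1} T_{n+1}$ acts as the scalar $1$, and then extend by completeness; the rapid decay of the $q$-Hermite coefficients of $T_{n+1} f$ guarantees that all the divided-difference operators are well defined.
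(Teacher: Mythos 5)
Your proposal is correct and is essentially the paper's own argument: apply the semigroup law to get $T_{1-\{a\}}\,g = T_{\lfloor a\rfloor+1}f$, then peel off integer units with the lowering relation $\D T_b = T_{b-1}$, invoking Remark \ref{analytic-continuation} for the boundary step. The only difference is bookkeeping: the paper keeps a regularizing $\epsilon>0$ so that every application of the lowering relation has parameter strictly greater than $1$, arriving at $\D T_{1+\epsilon}f$ and letting $\epsilon\to 0^+$, whereas you telescope all the way down to $\D T_1 f$ and justify that single boundary identity by the same strong-operator limit (or by your eigenfunction check, which indeed works since $g H_m$ is an eigenfunction of both $T_1$ and $\D$ with reciprocal eigenvalues) --- the same idea executed at $\epsilon=0$.
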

\begin{proof}
	It is clear that if $g(x) = (T_af)(x)$ then 
	\begin{eqnarray}
	\notag
	\bg
	\left[ \D^{\lfloor{a}\rfloor+1}T_{1-\{a\}+\epsilon}g(x)\right]
	=\left[  \D^{\lfloor{a}\rfloor+1}  T_{\lfloor{a}\rfloor+1+\epsilon} f(x)\right]  \\  
	=\D T_{1+\epsilon} f(x),
	\eg
	\end{eqnarray}
	holds for any $\epsilon >0$. 
	Now let $\epsilon \to 0^+$. The proof for $S_a$ is identical. 
\end{proof}

 \section{Application to the Askey--Wilson  polynomials} 
 We recall that the Askey--Wilson  polynomials are, \cite{Ismbook},  
 \begin{eqnarray}
 \bg
 p_n(\cos \t; a, b, c, d) = (ab, ac, ad; q)_na^{-n} \qquad \\
 \qquad \times 
 {}_4\f_3\left(\left. \ba{c} 
q^{-n}, q^{n-1} abcd,  ae^{i\t}, ae^{-i\t}
\\
ab, \quad ac, \quad ad
 \ea \right|q, q \right).
 \eg
 \end{eqnarray}
 \begin{thm}
 The operators $T_a$ have the property 
 \begin{eqnarray}
 \label{eq5phi4}
 \bg
T_a p_n(\cdot; -q^{1/4}, b, c, d) =  
 (-1)^n \; (-q^{1/4}b, - q^{1/4}c,-q^{1/4}d;q)_n
 \\
 \times  
 \frac{(1-q)^a (q^{a+1};q)_\infty}
{2^aq^{a/4+n/4}(q;q)_\infty}\;  \frac{(-q^{1/4} e^{i\t},
 -q^{1/4} e^{-i\t};q^{1/2})_\infty}{ (-q^{1/4+a/2} e^{i\t},
 -q^{1/4+a/2} e^{-i\t};q^{1/2})_\infty}  \\
 \qquad \times
 {}_5\f_4\left(\left. \ba{c}
q^{-n}, -q^{n-3/4} bcd, q,  -q^{a/2+1/4} e^{i\t}, -q^{a/2+1/4}e^{-i\t}
\\
-q^{1/4}b, \quad - q^{1/4}c, \quad -q^{1/4}d,  \quad q^{a+1}
 \ea \right|q, q \right).
 \eg
 \end{eqnarray}
 \end{thm}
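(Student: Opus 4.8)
The plan is to apply $T_a$ directly to the terminating ${}_4\phi_3$ series defining $p_n(\cos\theta;-q^{1/4},b,c,d)$ and to integrate term by term against the kernel in \eqref{eqdefTa}. Because $p_n$ is a polynomial the $k$-sum terminates at $k=n$, so interchanging summation and integration is automatic and no convergence question arises. After pulling out the prefactor $(-1)^n q^{-n/4}(-q^{1/4}b,-q^{1/4}c,-q^{1/4}d;q)_n$ (using $(-q^{1/4})^{-n}=(-1)^n q^{-n/4}$), the generic summand of the series carries, in the variable of integration, exactly the factor $(-q^{1/4}e^{i\phi},-q^{1/4}e^{-i\phi};q)_k$ together with the numerator datum $(q^{-n},-q^{n-3/4}bcd;q)_k/(-q^{1/4}b,-q^{1/4}c,-q^{1/4}d,q;q)_k\,q^k$.

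The crucial step is that this factor meets the reciprocal of $g(\cos\phi)=(-q^{1/4}e^{i\phi},-q^{1/4}e^{-i\phi};q^{1/2})_\infty$ sitting in the kernel. Splitting each base-$q^{1/2}$ product into two base-$q$ products via $(x;q^{1/2})_\infty=(x;q)_\infty(xq^{1/2};q)_\infty$, I obtain
\[
\frac{(-q^{1/4}e^{i\phi},-q^{1/4}e^{-i\phi};q)_k}{(-q^{1/4}e^{i\phi},-q^{1/4}e^{-i\phi};q^{1/2})_\infty}=\frac{1}{h(\cos\phi;-q^{1/4+k},-q^{3/4})},
\]
so that the $k$-th integral reduces to the Askey--Wilson integral
\[
\int_0^\pi\frac{w_H(\cos\phi|q)\,\sin\phi\,d\phi}{h(\cos\phi;-q^{1/4+k},-q^{3/4},q^{a/2}e^{i\theta},q^{a/2}e^{-i\theta})},
\]
which I evaluate by the four-parameter formula recorded in Section 4, with $a_1=-q^{1/4+k}$, $a_2=-q^{3/4}$, $a_3=q^{a/2}e^{i\theta}$, $a_4=q^{a/2}e^{-i\theta}$. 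Here $a_1a_2a_3a_4=q^{a+k+1}$, $a_1a_2=q^{k+1}$ and $a_3a_4=q^a$, the last of which cancels the $(q^a;q)_\infty$ in the normalizing constant of $T_a$.

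It then remains to reassemble the sum and read off a ${}_5\phi_4$. The $\theta$-dependent output of the integral is $1/h(\cos\theta;-q^{1/4+k+a/2},-q^{3/4+a/2})$; factoring its base-$q^{1/2}$ products in the reverse direction rewrites it as $(-q^{1/4+a/2}e^{i\theta},-q^{1/4+a/2}e^{-i\theta};q)_k$ divided by $(-q^{1/4+a/2}e^{i\theta},-q^{1/4+a/2}e^{-i\theta};q^{1/2})_\infty$, which supplies both the numerator parameters $-q^{a/2+1/4}e^{\pm i\theta}$ and the $\theta$-dependent quotient in the prefactor of \eqref{eq5phi4}. The $k$-dependent constant is handled by
\[
\frac{(q^{a+k+1};q)_\infty}{(q^{k+1};q)_\infty}=\frac{(q^{a+1};q)_\infty}{(q;q)_\infty}\,\frac{(q;q)_k}{(q^{a+1};q)_k},
\]
which deposits $(q^{a+1};q)_\infty/(q;q)_\infty$ into the prefactor and inserts the denominator parameter $q^{a+1}$; the accompanying upper parameter $q$ merely cancels the intrinsic $(q;q)_k$, so that the displayed ${}_5\phi_4$ is in fact the same series as the resulting ${}_4\phi_3$-type sum. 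Collecting the constants $(1-q)^a2^{-a}q^{-a/4}$ from $T_a$ with the polynomial prefactor above then yields \eqref{eq5phi4} verbatim.

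The only real obstacle I anticipate is bookkeeping: one must keep the base-$q$ and base-$q^{1/2}$ Pochhammer symbols cleanly separated through the two factorizations — of $g(\cos\phi)$ before integrating and of $h(\cos\theta;\,\cdots)$ afterwards — and verify that the pair (numerator $q$, denominator $q^{a+1}$) is precisely what promotes the natural ${}_4\phi_3$ to the stated ${}_5\phi_4$. There is no analytic subtlety, since the $k$-sum terminates.
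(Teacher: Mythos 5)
Your proposal is correct and follows essentially the same route as the paper's own (much terser) proof: apply $T_a$ term by term to the terminating ${}_4\phi_3$, absorb the factors $(-q^{1/4}e^{\pm i\phi};q)_k$ into $1/g(\cos\phi)$ to reduce the $k$-th term to the Askey--Wilson integral with parameters $-q^{1/4+k},\,-q^{3/4},\,q^{a/2}e^{\pm i\theta}$, evaluate, and reassemble. Your bookkeeping — the cancellation of $(q^a;q)_\infty$ against $(a_3a_4;q)_\infty$, the identity $(q^{a+k+1};q)_\infty/(q^{k+1};q)_\infty=\frac{(q^{a+1};q)_\infty}{(q;q)_\infty}\frac{(q;q)_k}{(q^{a+1};q)_k}$, and the role of the numerator parameter $q$ paired with the denominator parameter $q^{a+1}$ — is exactly what the paper's phrase ``the above expression simplifies'' suppresses.
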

 \begin{proof}
 It is clear that 
 \begin{eqnarray}
 \notag
 \bg
\frac{(-1)^nq^{n/4}}{(-q^{1/4} b, -q^{1/4} c, -q^{1/4} d;q)_n}   \frac{2^aq^{a/4}} {(1-q)^a (q^a;q)_\infty}\; T_ap_n(\cdot ; -q^{1/4}, b,c, d)   \\
=\sum_{k=0}^n \frac{(q^{-n}, -q^{n-3/4}bcd;q)_k}
{(q,-q^{1/4} b, -q^{1/4} c, -q^{1/4} d;q)_k} q^k  
\int_0^\pi \frac{(q, e^{2i\f},e^{-2i\f};q)_\infty }{h(\cos \f; -q^{k+1/4}, 
-q^{3/4}, q^{a/2} e^{i\t}, q^{a/2} e^{-i\t})}    \frac{ d\f}{2\pi}. 
 \eg
 \end{eqnarray}
 The integral is an Askey--Wilson  integral  and the above expression 
 simplifies to establish 
 \end{proof}
 The special case when $b = -q^{3/4}$ is very interesting. In this case we have 
 \begin{eqnarray}
 \bg
 T_a p_n(\cdot; -q^{1/4},   -q^{3/4}, c, d) =  
 (-1)^n \; (q, - q^{1/4}c,-q^{1/4}d;q)_n   
 \\
 \times   
 \frac{(1-q)^a( q^{a+1};q)_\infty}
{2^aq^{a/4+n/4}(q;q)_\infty}\;  \frac{(-q^{1/4} e^{i\t}, 
 -q^{1/4} e^{-i\t};q^{1/2})_\infty}{ (-q^{1/4+a/2} e^{i\t}, 
 -q^{1/4+a/2} e^{-i\t};q^{1/2})_\infty}  \\
 \qquad \times 
 {}_4\f_3\left(\left. \ba{c} 
q^{-n}, q^{n} cd,  -q^{a/2+1/4} e^{i\t}, -q^{a/2+1/4}e^{-i\t}
\\
-q^{1/4}c, \quad -q^{1/4}d,  \quad q^{a+1}
 \ea \right|q, q \right).
 \eg
 \end{eqnarray}
 The ${}_4\f_3$ is  a multiple of an Askey--Wilson  polynomial. Indeed we proved the connection relation 
  \begin{eqnarray}
  \label{eqConnAWP}
 \bg
 T_a p_n(\cdot; -q^{1/4},   -q^{3/4}, c, d) =  \frac{(q;q)_n}{(q^{a+1};q)_n} q^{an/2} \frac{(1-q)^a(q^{a+1};q)_\infty}
{2^aq^{a/4}(q;q)_\infty}\; 
 \\
 \times   
  \frac{(-q^{1/4} e^{i\t}, 
 -q^{1/4} e^{-i\t};q^{1/2})_\infty}{ (-q^{1/4+a/2} e^{i\t}, 
 -q^{1/4+a/2} e^{-i\t};q^{1/2})_\infty}  
\; p_n(x; -q^{a/2+1/4}, - q^{3/4+a/2}, cq^{-a/2}, dq^{-a/2}).
 \eg
 \end{eqnarray}
  
  Our next task is to derive a bilinear formula which follows from the Hilbert-Schmidt decomposition of a symmetric kernel of an integral operator.  We follow the technique develeoped in \cite{Ism}. 
  
  We recall the orthogonality relation of the  Askey-Wilson polynomial  \cite{And:Ask:Roy}, \cite{Gas:Rah}, \cite{Ismbook}    
\begin{eqnarray}\label{eqawpo}
\bg
\int_{0}^{\pi}p_m(\cos\t;\textbf{t}|q)p_n(\cos\t;\textbf{t}|q)w(\cos\t;\textbf{t})d\t\\
=\frac{2\pi(t_1t_2t_3t_4q^{2n};q)_{\infty}(t_1t_2t_3t_4q^{n-1};q)_{n}}{(q^{n+1};q)_{\infty}\prod_{1\le j<k\le 4}(t_jt_kq^{n};q)_{\infty}}\delta_{m,n}
\eg
\end{eqnarray}
 where ${\bf t}= (t_1, t_2, t_3, t_4)$, 
 \begin{eqnarray}
 w(\cos\t;t_1,t_2,t_3,t_4): =\frac{(e^{2i\t},e^{-2i\t};q)_{\infty}}{\prod_{j=1}^4(t_je^{i\t},t_je^{-i\t};q)_{\infty}}.
 \end{eqnarray}
 Then we define three sequences $\{A_n\}$, $\{B_n\}$ and $\{C_n\}$. Let 
 \begin{eqnarray}
 \notag
 \bg
 M_n(t_1,t_2,t_3,t_4)  =\frac{2\pi(t_1t_2t_3t_4q^{2n};q)_{\infty}(t_1t_2t_3t_4q^{n-1};q)_{n}}{(q^{n+1};q)_{\infty}\prod_{1\le j<k\le 4}(t_jt_kq^{n};q)_{\infty}},\\
A_n=M_n(-q^{a/2+1/4},-q^{a/2+3/4},q^{-a/2}c,q^{-a/2}d),\\
 B_n=M_n(-q^{1/4},-q^{3/4},c,d),\qquad 
C_n=\frac{(q;q)_n}{(q^a;q)_{n+1}(q;q)_\infty}q^{an/2}.
 \eg
 \end{eqnarray}

 Note that  \eqref{eqConnAWP} shows that  $T_a$ maps an Askey-Wilson polynomial to another Askey-Wilson polynomial and \eqref{eqawpo} show its orthogonality relation, hence we drive the following the integral equation,
 \begin{eqnarray}\label{int1}
 \bg
 \int_{0}^{\pi}w_0(\cos\t|q )\int_0^\pi  
 \frac{ w_H(\cos \phi_1|q) p_n(\cos\f_1|-q^{1/4},-q^{3/4},c,d)}
 	{(-q^{1/4} e^{i\f_1}, -q^{1/4} e^{-i\f_1};q^{1/2})_\infty  h(\cos \f_1; q^{a/2}e^{i\t}, q^{a/2} e^{-i\t})} \sin \f_1 \\
 	\int_0^\pi  
 	\frac{ w_H(\cos \f_2|q) p_m(\cos\f_2|-q^{1/4},-q^{3/4},c,d)}
 	{(-q^{1/4} e^{i\f_2}, -q^{1/4} e^{-i\f_2};q^{1/2})_\infty  h(\cos \f_2; q^{a/2}e^{i\t}, q^{a/2} e^{-i\t})} \sin \f_2  d\f_1 d\f_2 d\t\\
 	=A_nC_n^2 \delta_{m,n},
 	\eg
 	\end{eqnarray}
 where 
 \begin{eqnarray}
 \notag
 w_0(\cos\t|q)=w(\cos\t;-q^{a/2+1/4},-q^{a/2+3/4},q^{-a/2}c,q^{-a/2}d|q)\; (-q^{1/4+a/2}e^{i\t},-q^{1/4+a/2}e^{-i\t};q^{1/2})^2_{\infty}.
 \end{eqnarray}
  If we change the order of integration, which leads to 
  \begin{eqnarray}
  \label{int2}
  \bg
  \int_{0}^{\pi}\int_{0}^{\pi}\frac{ w_H(\cos \phi_1|q) p_n(\cos\f_1|-q^{1/4},-q^{3/4},c,d)\sin \f_1}
  {(-q^{1/4} e^{i\f_1}, -q^{1/4} e^{-i\f_1};q^{1/2})_\infty }\\ \times
  \frac{ w_H(\cos \f_2|q) p_m(\cos\f_2|-q^{1/4},-q^{3/4},c,d) \sin \f_2}
  {(-q^{1/4} e^{i\f_2}, -q^{1/4} e^{-i\f_2};q^{1/2})_\infty } \ \\
  \int_{0}^{\pi} \frac{w_0(\cos\t|q )\;\;  d\t\;  d\f_1\; d\f_2}{h(\cos \f_1; q^{a/2}e^{i\t}, q^{a/2} e^{-i\t})h(\cos \f_2; q^{a/2}e^{i\t}, q^{a/2} e^{-i\t})} 	=A_nC_n^2 \delta_{m,n}.
  \eg
  \end{eqnarray}

 The polynomials  
 $p_n(x|-q^{a/2+1/4},-q^{a/2+3/4},q^{-a/2}c,q^{a/2}d)$ are uniquely determined by orthogonality relation \eqref{eqConnAWP}.
 From \eqref{int1} and uniqueness, we obtain kernel function $K(\cos\f_1,\cos\f_2)$ and it has the following connection relation:
\begin{equation}\label{kernel}
	 \frac{A_nC_n^2}{B_n} p_n(\cos\f_2|-q^{1/4},-q^{3/4},c,d)=\int_{0}^{\pi}K(\cos\f_1,\cos\f_2)
	p_n(\cos\f_1|-q^{1/4},-q^{3/4},c,d)d\f_1,
\end{equation}
where the Kernel function $K(\cos\f_1,\cos\f_2)$ is 
\begin{eqnarray}
\bg
K(\cos\f_1,\cos\f_2)= \frac{w_H(\cos \f_1|q)}{(-q^{1/4} e^{i\f_1}, -q^{1/4} e^{-i\f_1};q^{1/2})_\infty } \; \frac{w_H(\cos \f_2|q)}{(-q^{1/4} e^{i\f_2}, -q^{1/4} e^{-i\f_2};q^{1/2})_\infty }\\
\times\frac{\sin\f_1\;  \sin\f_2}{w(\cos\f_2;-q^{1/4},-q^{3/4},c,d)}\\
\times 
\int_{0}^{\pi}\frac{w_0(\cos\t|q)}{h(\cos \f_1; q^{a/2}e^{i\t}, q^{a/2} e^{-i\t})h(\cos \f_2; q^{a/2}e^{i\t}, q^{a/2} e^{-i\t})}d\t.
\eg
\end{eqnarray}
The connection relation \eqref{kernel} implies the bilinear formula as follows
\begin{eqnarray}
\label{kernel2}
\bg
\frac{K(\cos\f_1,\cos\f_2)}{w(\cos\f_1|-q^{1/4},-q^{3/4},c,d)}= \sum_{n=0}^{\infty} A_n\left(\frac{C_n}{B_n}\right)^2p_n(\cos\f_1|-q^{1/4},-q^{3/4},c,d)\\\times p_n(\cos\f_2|-q^{1/4},-q^{3/4},c,d).
\eg
\end{eqnarray}
 
  Clearly, the left-side of \eqref{kernel2} is continuous, square integral and symmetric kernel.

 \section{Approximation Operators}
 We let 
 \begin{eqnarray}
 \label{eqdefej}
 e_j(x) = x^j.
 \end{eqnarray}
 Our first result is the following expansion 
  \begin{eqnarray}
  \label{eqexinHn}
 \bg
 \frac{1}{(-q^{1/4}e^{i\t}, -q^{1/4}e^{-i\t};q^{1/2})_\infty}
 = \Sum \frac{(-1)^n q^{n/2}}{(q;q)_n(q;q)_\infty} H_n(\z|q)H_n(\cos \t|q), 
 \eg
 \end{eqnarray}
 where $\z = (q^{1/4}+q^{-1/4})/2$. 
\begin{proof}[Proof of \eqref{eqexinHn}]
Let the left-hand side of \eqref{eqexinHn} be 
$\Sum c_n H_n(x|q)$. Then 
\begin{eqnarray}
\notag
c_n(q;q)_n = \int_0^\pi  \frac{w_H(\cos \t|q)H_n(\cos\t|q) }{(-q^{1/4}e^{i\t}, -q^{1/4}e^{-i\t};q^{1/2})_\infty}  \sin \t d\t.
\end{eqnarray}
In view of the generating function \eqref{eqGFHn} we see 
that 
\begin{eqnarray}
\notag
\Sum c_nt^n =  \int_0^\pi \frac{w_H(\cos \t|q)\sin \t \; d\t }{(-q^{1/4}e^{i\t}, -q^{1/4}e^{-i\t};q^{1/2})_\infty(te^{i\t}, te^{-i\t};q)_\infty}.
\end{eqnarray}
 The  above integral  is a special Askey--Wilson integral. Indeed   
 \begin{eqnarray}
 \notag
 \bg
   \int_0^\pi \frac{w_H(\cos \t|q)\sin \t \; d\t }{(-q^{1/4}e^{i\t}, -q^{1/4}e^{-i\t};q^{1/2})_\infty(te^{i\t}, te^{-i\t};q)_\infty} \\
 = \frac{(q;q)_\infty}{2\pi} \int_0^\pi \frac{(e^{2i\t}, e^{-2i\t};q)_\infty d\t }{(-q^{1/4}e^{i\t}, -q^{1/4}e^{-i\t},-q^{3/4}e^{i\t}, -q^{3/4}e^{-i\t}, te^{i\t}, te^{-i\t};q)_\infty}\\
 = \frac{1}{(q, -q^{1/4}t, -q^{3/4}t;q)_\infty} = \frac{1}{
 (q;q)_\infty}\Sum \frac{(-t\sqrt{q})^n}{(q;q)_n} H_n(\z|q),
 \eg
 \end{eqnarray}
 where $\z = (q^{1/4}+q^{-1/4})/2$. Therefore 
 \begin{eqnarray}
 \notag
\int_0^\pi  \frac{w_H(\cos \t|q)  H_n(\cos \t|q)\, \sin \t \; d\t }{(-q^{1/4}e^{i\t}, -q^{1/4}e^{-i\t};q^{1/2})_\infty} =(-1)^n
\frac{q^{n/2}}{(q;q)_\infty} H_n(\z|q),
 \end{eqnarray}
 which implies \eqref{eqexinHn} and the proof is complete. 
 \end{proof}
 
 Therefore the action of $T_a$ on the constant function 1 is 
 \begin{eqnarray}
 \notag
 \bg
T_a e_0(x) = \frac{(1-q)^a}{2^aq^{a/4}}
(-q^{1/4}e^{i\t}, -q^{1/4}e^{-i\t};q^{1/2})_\infty
\int_0^\pi  \Sum  \frac{q^{n/2}(-1)^n}{(q;q)_n(q;q)_\infty} H_n(\z|q)H_n(\cos \f|q) \\
\times
\sum_{m=0}^\infty H_m(\cos \t|q) H_m(\cos \f|q)\frac{q^{am/2}}{(q;q)_m} w_H(\cos \f)\; \sin \f\; d\f\\
= \frac{(1-q)^a}{2^aq^{a/4}(q;q)_\infty}  
(-q^{1/4}e^{i\t}, -q^{1/4}e^{-i\t};q^{1/2})_\infty
\Sum  \frac{(-1)^n q^{n(a+1)/2}}{(q;q)_n} H_n(\cos \t|q) 
H_n(\z|q). 
 \eg
 \end{eqnarray}
 Therefore we have proved that 
 \begin{eqnarray}
 \label{eqTae0}
 \bg
 T_a e_0(\cos\t) =  \frac{(1-q)^a(q^{a+1};q)_\infty}{2^aq^{a/4}(q;q)_\infty}  \qquad   \qquad \qquad  \qquad \qquad\\
 \times 
\frac{(-q^{1/4}e^{i\t}, -q^{1/4}e^{-i\t};q^{1/2})_\infty}
{(-q^{(a+1/2)/2} e^{i\t}, - q^{(a+1/2)/2} e^{-i\t}, 
-q^{(a+3/2)/2} e^{i\t}, - q^{(a+3/2)/2} e^{-i\t};q)_\infty}.
 \eg
 \end{eqnarray}
 Carlitz \cite{Car} proved a bilinear generating function which is equivalent to 
 \begin{eqnarray}
 \label{eqCarlitz}
 \bg
  \Sum H_n(\cos \t|q) H_{n+m}(\cos\f|q) \frac{t^n}{(q;q)_n} \\
  = \frac{(t^2;q)_\infty}{(te^{i(\t+\f)}, te^{i(\t- \f)}, te^{i(\f+\t)}, 
  te^{-i(\t+\f)};q)_\infty}
   \sum_{j=0}^m \gauss{m}{j} \frac{(te^{i(\t+\f)}, 
  te^{i(\f-\t};q)_j}{(t^2;q)_j}\; e^{i\f(m-2j)}. 
  \eg
 \end{eqnarray}
 The special case $m=1$ is 
 \begin{eqnarray}
 \label{eqLemma7.1}
 \bg
 \Sum H_n(\cos \t|q) H_{n+1}(\cos \f|q) \frac{t^n}{(q;q)_n}  \qquad \qquad\\  \qquad \qquad
 =   \frac{2(\cos \f -t\cos \t)\; (qt^2;q)_\infty}
 {(t e^{i(\t+\f)}, t e^{i(\t-\f)}, t e^{-i(\t+\f)}, 
 t e^{i(\f-\t)};q)_\infty}.
 \eg
 \end{eqnarray}
 and can be directly proved by 
 applying  $\D$ to the Poisson kernel \eqref{eqPoiKer} and making use of the lowering relation 
 \begin{eqnarray}
 \label{eqDHn}
 \D H_n(x|q) = 2 \frac{(1-q^n)}{1-q} q^{(1-n)/2} H_{n-1}(x|q), 
 \end{eqnarray}
 \cite[(13.1.21)]{Ismbook}. Ismail and Simeonov  \cite{Ism:Sim} 
discuss an operational approach to the approach  to 
\eqref{eqLemma7.1}.  

Ismail and Stanton \cite{Ism:Sta2} gave an alternate representation of the right-hand side of \eqref{eqCarlitz}. Their representation is 
\begin{eqnarray}
 \label{eqCarlitzIsmSta}
 \bg
  \Sum H_n(\cos \t|q) H_{n+m}(\cos\f|q) \frac{t^n}{(q;q)_n} 
  = \frac{(t^2q^m;q)_\infty}{(te^{i(\t+\f)}, te^{i(\t- \f)}, te^{i(\f+\t)}, 
  te^{-i(\t+\f)};q)_\infty} \\
  \times 
   \sum_{k=0}^m \gauss{m}{k} (te^{i(\f-\t)};q)_k 
  (te^{i(\t-\f)};q)_{m-k} e^{i(m-2k)\f}. 
  \eg
 \end{eqnarray}
 This representation shows that the $k$ sum is polynomial in 
 $t$ of degree $m$ while the finite sum in Carlitz's formula 
 is a polynomial of degree at most $2m$. There is actually 
 a huge amount of cancelations in Carlitz's formula. 

\begin{thm}
The action of $T_a$ on $e_0$ and $e_1$ is given by  
 \begin{eqnarray}
 \label{eqTae02}
 T_a e_0(\cos\t) = \frac{(1-q)^a(q^{a+1};q)_\infty}{2^aq^{a/4}(q;q)_\infty}  \frac{(-q^{1/4}e^{i\t}, -q^{1/4}e^{-i\t};q^{1/2})_\infty}
{(-q^{(a+1/2)/2} e^{i\t}, - q^{(a+1/2)/2} e^{-i\t};q^{1/2})_\infty}
\end{eqnarray}
and 
\begin{eqnarray}
\label{eqTae1}
\bg
 T_a e_1(x) =  \frac{(1-q)^a(q^{a+2};q)_\infty}{2^aq^{a/4}(q;q)_\infty}[xq^{a/2}(1-q)+\z q^{1/2}
(q^a-1)] 
 \\ \qquad \times  \frac{(-q^{1/4}e^{i\t}, -q^{1/4}e^{-i\t};q^{1/2})_\infty}
{(-q^{(a+1/2)/2} e^{i\t}, - q^{(a+1/2)/2} e^{-i\t};q^{1/2})_\infty},
\eg
\end{eqnarray}
respectively. 
\end{thm}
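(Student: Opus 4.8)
The plan is to dispatch the two formulas separately. For $e_0$, identity \eqref{eqTae02} is merely a rewriting of the already-proved \eqref{eqTae0}: using the elementary splitting $(\alpha;q^{1/2})_\infty=(\alpha;q)_\infty(\alpha q^{1/2};q)_\infty$ with $\alpha=-q^{(a+1/2)/2}e^{\pm i\t}$, the four base-$q$ denominator factors of \eqref{eqTae0} collapse to the two base-$q^{1/2}$ factors of \eqref{eqTae02}, because $q^{(a+1/2)/2}q^{1/2}=q^{(a+3/2)/2}$. Hence only the $e_1$ statement requires a genuine computation.

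For $e_1(\cos\t)=\cos\t$, the idea is to compute the $q$-Hermite expansion of $e_1/g$ (with $g$ as in \eqref{eqdefg}) and then apply $T_a$ eigenfunction by eigenfunction. Starting from \eqref{eqexinHn}, I would write $1/g=\Sum c_n H_n(\cos\t|q)$ with $c_n=(-1)^nq^{n/2}H_n(\z|q)/[(q;q)_n(q;q)_\infty]$, multiply by $\cos\t$, and invoke the three-term recurrence \eqref{eqqHrr} in the form $\cos\t\,H_n=\tfrac12 H_{n+1}+\tfrac12(1-q^n)H_{n-1}$. This produces $e_1/g=\Sum b_m H_m$ with $b_m=\tfrac12 c_{m-1}+\tfrac12(1-q^{m+1})c_{m+1}$. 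Since Theorem \ref{thm2} gives $T_a(gH_m)=\tfrac{(1-q)^a}{2^aq^{a/4}}q^{ma/2}gH_m$, we obtain $T_ae_1=\tfrac{(1-q)^a}{2^aq^{a/4}}\,g\Sum b_m q^{ma/2}H_m$, so everything reduces to summing $\Sum b_m q^{ma/2}H_m(\cos\t|q)$ in closed form.

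Next I would substitute the two pieces of $b_m$ and reindex, turning the target sum into $\tfrac{q^{a/2}}{2}\Sum c_n q^{na/2}H_{n+1}+\tfrac{q^{-a/2}}{2}\Sum(1-q^n)c_nq^{na/2}H_{n-1}$. Each of these is a specialization of Carlitz's $m=1$ bilinear formula \eqref{eqLemma7.1}: after absorbing the sign and powers of $q$ into the single variable $r=-q^{(a+1)/2}$ (note $c_n q^{na/2}=r^nH_n(\z|q)/[(q;q)_n(q;q)_\infty]$) and setting one of the two Hermite arguments equal to $\z=(q^{1/4}+q^{-1/4})/2$, formula \eqref{eqLemma7.1} evaluates both generating functions. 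The pleasant fact is that the four base-$q$ theta-type denominator factors coming from \eqref{eqLemma7.1} recombine, exactly as in the $e_0$ case, into the single base-$q^{1/2}$ factor $(-q^{(a+1/2)/2}e^{i\t},-q^{(a+1/2)/2}e^{-i\t};q^{1/2})_\infty$, while $(qr^2;q)_\infty=(q^{a+2};q)_\infty$ supplies the expected numerator.

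Finally I would combine the two evaluated sums over the common denominator and simplify the resulting linear-in-$x$ prefactor. The main (purely bookkeeping) obstacle is the $q$-power arithmetic in this last step: one must check that the contributions $q^{a/2}(x+q^{(a+1)/2}\z)$ and $-q^{1/2}(\z+q^{(a+1)/2}x)$ collapse to $xq^{a/2}(1-q)+\z q^{1/2}(q^a-1)$, which is precisely the bracket in \eqref{eqTae1}. Care is also needed in matching the two arguments of \eqref{eqLemma7.1} to the summation index, so that the $H_{n\pm1}$ shifts and the evaluation at $\z$ are assigned to the correct variable; once this is arranged, the stated formula follows.
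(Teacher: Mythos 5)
Your proposal is correct and takes essentially the same route as the paper: both rewrite \eqref{eqTae02} as a restatement of \eqref{eqTae0}, and for $e_1$ both expand $1/g$ via \eqref{eqexinHn}, shift indices with the recurrence \eqref{eqqHrr}, and evaluate the two resulting bilinear sums with Carlitz's formula \eqref{eqLemma7.1}, arriving at the same bracket $xq^{a/2}(1-q)+\z q^{1/2}(q^a-1)$. The only cosmetic difference is that you route the orthogonality step through the eigenfunction property of Theorem \ref{thm2}, whereas the paper expands the Poisson kernel \eqref{eqPoiKer} inside the integral and applies orthogonality directly.
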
 
\begin{proof}
Equation \eqref{eqTae02} is a restatement of \eqref{eqTae0}. 
It is clear from the three term recurrence rekation in \eqref{eq2.1} that 
\begin{eqnarray}
\notag
\bg
2  \frac{2^aq^{a/4}} {(1-q)^a} \frac{T_a e_1(\cos\t)}
{(-q^{1/4}e^{i\t}, -q^{1/4}e^{-i\t};q^{1/2})_\infty}  \\
  = \int_0^\pi 2 \cos \f \Sum \frac{(-1)^n q^{n/2}}{(q;q)_n(q;q)_\infty} H_n(\z|q)H_n(\cos \f|q) w_H(\cos \f|q)\\
  \times  \sum_{m=0}^\infty 
  H_m(\cos \t|q) \frac{H_m(\cos \f|q)}{(q;q)_m} q^{ma/2} \sin \f 
 \; d\f\\
 = \sum_{m,n=0}^\infty  q^{ma/2} H_m(\cos \t|q) \frac{(-1)^n q^{n/2} H_n(\z|q)}{(q;q)_n(q;q)_m(q;q)_\infty}\\
 \times \int_0^\pi   \left(H_{n+1}(\cos \f|q) + (1-q^n) 
 H_{n-1}(\cos \f|q)\right) H_m(\cos \f|q) w_H(\cos \f|q) \sin \f 
 \; d\f.
\eg
\end{eqnarray}
We now use the orthogonality relation to simplify the last line 
to 
\begin{eqnarray}
(q;q)_m[\delta_{m,n+1} +  (1-q^n) \delta_{m,n-1}].
\notag
\end{eqnarray}
We apply \eqref{eqLemma7.1} and conclude that
\begin{eqnarray}
\notag
\bg
2  \frac{2^aq^{a/4}} {(1-q)^a}\ T_a e_1(\cos\t)  = 
\frac{2(q^{a+2};q)_\infty}{(q;q)_\infty}[xq^{a/2}(1-q)+\z q^{1/2}
(q^a-1)] \\
  \qquad \times  \frac{(-q^{1/4}e^{i\t}, -q^{1/4}e^{-i\t};q^{1/2})_\infty}
{(-q^{(a+1/2)/2} e^{i\t}, - q^{(a+1/2)/2} e^{-i\t};q^{1/2})_\infty}.
\eg
 \end{eqnarray}
 \end{proof}
 
 The next step is to find the rate of approximation of a continuous function by $T_a$.  To carry this out we will need the following lemmas.
 \begin{lem}
 The action of $T_a$ on $e_2$ is given by 
 \begin{eqnarray}
 \label{eqTaone2}
 \bg
 (T_ae_2)(x) =\frac{(1-q)^a(q^{a+3};q)_\infty}{2^aq^{a/4}(q;q)_{\infty}} \frac{(-q^{1/4}e^{i\t}, -q^{1/4}e^{-i\t};q^{1/2})_\infty}{(-q^{(a+1/2)/2} e^{i\t},-q^{(a+1/2)/2} e^{-i\t};q^{1/2})_\infty}\\
\{q^a(1-q)(1-q^2)x^2+\frac{1}{2}[(q^a+q)(q^{a/2+1/4}+q^{a/2+3/4}+q^{a/2+5/4}+q^{a/2+7/4})\\-(1+q)(q^{a/2+1/4}+q^{a/2+3/4}+q^{3a/2+5/4}+q^{3a/2+7/4})]x\\+\frac{1}{4}[(1+q)(q^{1/2}+q+q^a+q^{a+2}+q^{2a+1}+q^{2a+3/2}-1-q^{a+1/2}-q^{a+3/2}\\-q^{2a+2})+2(1-q^{a+1})(1-q^{a+2})-2(q^a+q^{a+3})]\} \eg
 \end{eqnarray}
 \end{lem}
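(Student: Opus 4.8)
The plan is to follow verbatim the computation that produced \eqref{eqTae1}, but with the three--term recurrence \eqref{eqqHrr} applied twice. First I would insert the expansion \eqref{eqexinHn} of $1/(-q^{1/4}e^{i\f},-q^{1/4}e^{-i\f};q^{1/2})_\infty$ together with the Poisson kernel \eqref{eqPoiKer} into the defining integral \eqref{eqdefTa} of $T_ae_2$, using that $T_a$ multiplies the $m$-th \cqHp mode by $q^{ma/2}$ up to the constant $(1-q)^a/(2^aq^{a/4})$ (Theorem \ref{thm2}). Writing $e_2(\cos\f)=\cos^2\f$ and iterating \eqref{eqqHrr} gives
\begin{eqnarray}
\notag
4\cos^2\f\, H_n(\cos\f|q) = H_{n+2}(\cos\f|q) + (2-q^n-q^{n+1})H_n(\cos\f|q) + (1-q^n)(1-q^{n-1})H_{n-2}(\cos\f|q),
\end{eqnarray}
so that after integrating against $H_m(\cos\f|q)$ and invoking the orthogonality \eqref{eqorcqH}, the $m$-sum collapses onto $m=n+2,\,n,\,n-2$.

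This leaves three explicit series in the variable $\cos\t$. The two off-diagonal pieces, coming from $H_{n\pm 2}(\cos\t|q)$, are each an $m=2$ instance of Carlitz's bilinear formula \eqref{eqCarlitz} (equivalently its cleaner Ismail--Stanton form \eqref{eqCarlitzIsmSta}) evaluated at $t_0=-q^{(a+1)/2}$, with one argument specialised to $\z$, where formally $e^{i\t_\z}=q^{1/4}$. For the $H_{n-2}$ sum I would first shift $n\mapsto n+2$, which turns the factor $(1-q^n)(1-q^{n-1})/(q;q)_{n+2}$ into $1/(q;q)_n$ and produces the companion $m=2$ Carlitz sum with $\z$ and $\cos\t$ interchanged; both carry the numerator $(t_0^2q^2;q)_\infty=(q^{a+3};q)_\infty$. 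The diagonal piece splits, through the coefficient $2-q^n-q^{n+1}$, into the Poisson kernels \eqref{eqPoiKer} at $t_0=-q^{(a+1)/2}$ and $t_1=-q^{(a+3)/2}$, combining as $2P(t_0)-(1+q)P(t_1)$.

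The last step is to put the three contributions over a common denominator. Both off-diagonal sums and $P(t_0)$ already carry the denominator $(-q^{a/2+1/4}e^{i\t},-q^{a/2+1/4}e^{-i\t};q^{1/2})_\infty$, while $P(t_1)$ carries the shifted denominator with $q^{a/2+1/4}$ replaced by $q^{a/2+5/4}$; the ratio supplies the polynomial factor
\begin{eqnarray}
\notag
(1+q^{a/2+1/4}e^{i\t})(1+q^{a/2+3/4}e^{i\t})(1+q^{a/2+1/4}e^{-i\t})(1+q^{a/2+3/4}e^{-i\t}),
\end{eqnarray}
which is quadratic in $\cos\t$. Similarly, to expose the common $(q^{a+3};q)_\infty$ prefactor of \eqref{eqTaone2} one writes $(t_0^2;q)_\infty=(q^{a+1};q)_\infty=(1-q^{a+1})(1-q^{a+2})(q^{a+3};q)_\infty$, which is exactly the source of the term $2(1-q^{a+1})(1-q^{a+2})$ in the answer.

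I expect the main obstacle to be purely the algebraic bookkeeping of the final assembly: each $m=2$ Carlitz finite sum $\sum_{k=0}^{2}\gauss{2}{k}(\,\cdots)_k(\,\cdots)_{2-k}e^{i(2-2k)\t}$ must be expanded, the polynomial factor from the $P(t_1)$ denominator mismatch multiplied in, and everything re-expressed as a polynomial in $x=\cos\t$ via $e^{i\t}+e^{-i\t}=2x$ and $e^{2i\t}+e^{-2i\t}=4x^2-2$. Collecting the coefficients of $x^2$, $x$ and $1$ and verifying that they collapse to the displayed $q$-power combinations is lengthy but entirely mechanical, and the two consistency checks above (the $(q^{a+3};q)_\infty$ prefactor and the $2(1-q^{a+1})(1-q^{a+2})$ term) confirm that no identity beyond \eqref{eqqHrr}, \eqref{eqorcqH}, \eqref{eqPoiKer} and \eqref{eqCarlitz} is required.
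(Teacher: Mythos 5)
Your proposal is correct and follows essentially the same route as the paper's proof: iterate the three-term recurrence to get $4x^2H_n=H_{n+2}+(2-q^n-q^{n+1})H_n+(1-q^n)(1-q^{n-1})H_{n-2}$, collapse the integral via \eqref{eqexinHn} and orthogonality, evaluate the diagonal piece as Poisson kernels at $-q^{(a+1)/2}$ and $-q^{(a+3)/2}$ and the two off-diagonal pieces (after the shift $n\mapsto n+2$) as $m=2$ instances of \eqref{eqCarlitzIsmSta} with one argument at $\z$, and then assemble over the common denominator using $(q^{a+1};q)_\infty=(1-q^{a+1})(1-q^{a+2})(q^{a+3};q)_\infty$. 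The paper does exactly this, stopping at the intermediate form in $\cos 2\t$ and $\cos\t$ and leaving the final collection into powers of $x$ implicit, just as you anticipate.
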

 \begin{proof}
 Note that \eqref{eqqHrr} implies 
 \begin{eqnarray}
 \notag
4x^2 H_n(x|q) = H_{n+2}(x|q) +(2-q^n -q^{n+1}) H_n(x|q) 
+ ( 1-q^n)(1 -q^{n-1}) H_{n-2}(x|q),
\end{eqnarray}
 hence, using \eqref{eqCarlitzIsmSta} for special cases $m=0$ and $m=2$, we obtain
 \begin{eqnarray}
 \notag
 \bg 
4 (q;q)_\infty  \frac{2^a q^{a/4}} {(1-q)^a} 
\frac{T_a e_2(\cos\t)}  {(-q^{1/4}e^{i\t}, -q^{1/4}e^{-i\t};q^{1/2})_\infty}  
\\= 
q^a \Sum (-1)^n \frac{q^{n(a+1)/2}}{(q;q)_n}H_n(\z|q)H_{n+2}(x|q) \\
+ \Sum (-1)^n \frac{q^{n(a+1)/2}}{(q;q)_n}
[2-q^n-q^{n+1}]H_n(\z|q)H_{n}(x|q) \\
+ q \Sum (-1)^n \frac{q^{n(a+1)/2}}{(q;q)_n}H_n(x|q)H_{n+2}(\z|q).
\eg
\end{eqnarray}
Thus the above expression is 
\begin{eqnarray}
\notag
\bg
= \frac{2(q^{a+1};q)_{\infty}}{(-q^{a/2+1/4}e^{i\t},-q^{a/2+1/4}e^{-i\t};q^{1/2})_{\infty}}\\ - \frac{(1+q)(q^{a+3};q)_{\infty}}{(-q^{a/2+5/4}e^{i\t},-q^{a/2+5/4}e^{-i\t};q^{1/2})_{\infty}}\\
+\frac{(q^{a+3}:q)_{\infty}}{(-q^{a/2+1/4}e^{i\t},-q^{a/2+1/4}e^{-i\t};q^{1/2})_{\infty}}[2(q^a+q^{a+3})\cos 2\t+\\2(q^a+q)(q^{a/2+1/4}+q^{a/2+3/4}+q^{a/2+5/4}+q^{a/2+7/4})\cos \t\\
+(1+q)(q^{1/2}+q+q^a+q^{a+2}+q^{2a+1}+q^{2a+3/2})].
 \eg
 \end{eqnarray}
 \end{proof}
 \begin{lem}\label{lem7.3}
 We have 
 \begin{eqnarray}
 T_a((x-y)^2) = \O(a),
 \end{eqnarray}
 as $a \to 0$ uniformly in a neighborhood of $a = 0$. 
 \end{lem}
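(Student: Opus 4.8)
The plan is to reduce the statement to the explicit formulas for $T_a e_0$, $T_a e_1$ and $T_a e_2$ already obtained in \eqref{eqTae02}, \eqref{eqTae1} and the preceding Lemma \eqref{eqTaone2}. Treating $(x-y)^2$ as a function of $x$ and expanding
\[
(x-y)^2 = e_2(x) - 2y\,e_1(x) + y^2 e_0(x),
\]
linearity of $T_a$ gives, after writing the result as a function of the argument $\cos\t$ and specializing to the diagonal $\cos\t = y$,
\[
T_a\big((x-y)^2\big)\big|_{\cos\t=y} = (T_a e_2)(y) - 2y\,(T_a e_1)(y) + y^2\,(T_a e_0)(y).
\]
First I would observe that the three formulas \eqref{eqTae02}, \eqref{eqTae1}, \eqref{eqTaone2} all carry the common factor
\[
\Omega(a,\t) = \frac{(1-q)^a}{2^a q^{a/4}}\,
\frac{(-q^{1/4}e^{i\t}, -q^{1/4}e^{-i\t};q^{1/2})_\infty}
{(-q^{(a+1/2)/2}e^{i\t}, -q^{(a+1/2)/2}e^{-i\t};q^{1/2})_\infty},
\qquad y=\cos\t,
\]
so that $T_a e_j = \Omega(a,\t)\,R_j(a,y)$, where $R_0$ is constant in $y$, $R_1$ is linear and $R_2$ is quadratic, each $R_j$ being a product of ratios $(q^{a+k};q)_\infty/(q;q)_\infty$ with an explicit polynomial in $y$ and in the powers $q^{a/2},q^a$.

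Next I would factor out $\Omega$ and set
\[
T_a\big((x-y)^2\big)\big|_{\cos\t=y} = \Omega(a,\t)\,\Psi(a,y),\qquad
\Psi(a,y) := R_2(a,y) - 2y\,R_1(a,y) + y^2 R_0(a,y).
\]
The decisive point is that $T_0 = I$ by part $\textup{(b)}$ of Theorem \ref{thm1}, together with $\Omega(0,\t)=1$ (at $a=0$ the prefactor is $1$ and the denominator product in $\Omega$ equals the numerator, namely $g$ of \eqref{eqdefg}). Hence $R_j(0,y) = e_j(y) = y^j$, and therefore
\[
\Psi(0,y) = y^2 - 2y\cdot y + y^2 = 0
\]
identically on $[-1,1]$; this is the exact cancellation that forces the leading term to vanish, and it spares us having to expand the cumbersome quadratic in \eqref{eqTaone2} at all.

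Finally I would expand in $a$. Each $R_j(a,y)$ is analytic in $a$ near $0$ (the factors $(q^{a+k};q)_\infty$, $q^{a/2}$, $q^a$ are entire in $a$) and is a polynomial in $y$ whose coefficients are bounded uniformly for $y\in[-1,1]$; thus $\Psi$ is jointly smooth on $[0,\delta]\times[-1,1]$ and, since $\Psi(0,\cdot)\equiv 0$, Taylor's theorem yields
\[
\Psi(a,y) = \Psi(0,y) + a\,\partial_a\Psi(0,y) + \O(a^2) = a\,\partial_a\Psi(0,y) + \O(a^2),
\]
with the remainder uniform in $y$. Because $\Omega(a,\t)$ is continuous and bounded on $[0,\delta]\times[0,\pi]$ — here one uses that $1/(-q^{1/4}e^{i\t}, -q^{1/4}e^{-i\t};q^{1/2})_\infty$ is bounded above and below by positive constants, as noted in Remark \ref{analytic-continuation}, and that the denominator product in $\Omega$ tends to it uniformly as $a\to 0$ — the product $\Omega\,\Psi$ is $\O(a)$ uniformly, which is exactly the claim. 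I expect the only genuine work to be this uniformity bookkeeping, namely controlling $\Omega$ away from $0$ and $\infty$ uniformly in $\t$ and bounding the $y$-coefficients of the Taylor remainder; both follow from continuity and nonvanishing of the relevant $q$-products on a compact domain and present no real obstacle.
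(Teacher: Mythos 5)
Your proposal is correct, and it shares the paper's starting point---reduce $T_a((x-y)^2)$ on the diagonal to the combination $(T_ae_2)(y) - 2y\,(T_ae_1)(y) + y^2 (T_ae_0)(y)$ and invoke the explicit formulas \eqref{eqTae02}, \eqref{eqTae1}, \eqref{eqTaone2}---but it diverges at the decisive step. The paper establishes the cancellation at order $a^0$ by brute force: it writes out the full quadratic bracket, simplifies it algebraically, and ends with the explicit expansion $x^2(q^2-q)a\log q - \tfrac{ax}{2}(1-q)(1+q)(q^{1/4}+q^{3/4})\log q + \cdots + \O(a^2)$, so the vanishing of the constant term is verified term by term. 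You instead obtain the identity $\Psi(0,y)\equiv 0$ for free from the strong convergence $T_a \to I$ (part (b) of Theorem \ref{thm1}, which is proved via Korovkin's theorem, so there is no circularity) together with analyticity of the coefficients in $a$, and then conclude by Taylor's theorem in $a$, with uniformity coming from the polynomial structure in $y$ and the boundedness of $\Omega$. Your route buys brevity and robustness---it would apply verbatim to any family whose moments are analytic in the parameter and which converges strongly to the identity, with no risk of algebraic slips---while what it gives up is the explicit coefficient of $a$, which the paper's computation produces and which is precisely the quantity relevant to the Voronovskaya-type relation alluded to in Section 7. One small caution: $T_0$ is not literally defined by \eqref{eqdefTa} (the kernel degenerates as $a \to 0^+$), so your appeal to ``$T_0 = I$'' should be read, as you in effect do, as the statement that $(T_ae_j)(y) \to e_j(y)$ uniformly as $a\to 0^+$ combined with continuity of $\Omega R_j$ at $a=0$; alternatively you could bypass Theorem \ref{thm1}(b) entirely by checking $R_j(0,y)=y^j$ directly from the three formulas, which is immediate for $j=0,1$ and a short computation for $j=2$.
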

 \begin{proof}
	Let $x=\cos\t$ and $y=\cos \phi$, using \eqref{eqTae02}, \eqref{eqTae1} and \eqref{eqTaone2}, we derive the following formula
	\begin{eqnarray}
	\bg
	x^2T_a e_0-2xT_a e_1+T_a e_2=\frac{(1-q)^a(q^{a+3};q)_\infty}{2^aq^{a/4}(q;q)_{\infty}} \frac{(-q^{1/4}e^{i\t}, -q^{1/4}e^{-i\t};q^{1/2})_\infty}{(-q^{(a+1/2)/2} e^{i\t},-q^{(a+1/2)/2} e^{-i\t};q^{1/2})_\infty}\\
	\times \{[(1-q^{a+1})(1-q^{a+2})-2q^{a/2}(1-q^{a+2})(1-q)+q^a(1-q)(1-q^2)]x^2\\+[\frac{1}{2}(q^a+q)(q^{a/2+1/4}+q^{a/2+3/4}+q^{a/2+5/4}+q^{a/2+7/4})\\-\frac{1}{2}(1+q)(q^{a/2+1/4}+q^{a/2+3/4}+q^{3a/2+5/4}+q^{3a/2+7/4})\\-(1-q^{a+2})(q^{3/4}+q^{1/4})(q^a-1)]x+\\   
\frac{1}{4}[(1+q)(q^{1/2}+q+q^a+q^{a+2}+q^{2a+1}+q^{2a+3/2}-1\\ 
-q^{a+1/2}-q^{a+3/2}-q^{2a+2})+2(1-q^{a+1})(1-q^{a+2})-2(q^a+q^{a+3})]\}.
\eg
\end{eqnarray}
In fact, the quantity in the square bracket is 
\begin{eqnarray}
\notag
\bg
{}[(1-q^{a+1})(1-q^{a+2})-2q^{a/2}(1-q^{a+2})(1-q)+q^a(1-q)(1-q^2)]x^2\\+[\frac{1}{2}q^{a/2+1/4}(1+q^{1/2})(1+q)(q^a+q-1-q^{a+1})\\-(1-q^{a+2})(q^{3/4}+q^{1/4})(q^a-1)]x\\+\frac{1}{4}[(1+q)((q^{1/2}+q)(1+q^{2a+1/2})-(1+q^{a+1/2})(1+q^{a+3/2})-q^{a}(1+q^{2}))\\+2(1+q^{2a+3})]
	=x^2(q^2-q)a\log q -\frac{ax}{2}(1-q)(1+q)(q^{1/4}+q^{3/4})\log q\\+\frac{1}{4}[(1+q)(-1-q^{1/2}+2q+q^{3/2}+q^2)-4q^2]a\log q +\O(a^2).
	\eg
	\end{eqnarray}
\end{proof}
 
 The next theorem gives the order of the error term in the approximation of a function by $T_a$. 
 
 \begin{thm}\label{thmOrderofApp} Assume that $f$ is twice continuously differentiable. Then 
 \begin{eqnarray}
 \|T_a f - f\| = \O(a).
 \end{eqnarray}
 \end{thm}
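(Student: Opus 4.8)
The plan is to treat $T_a$ as a positive linear operator and run a quantitative Korovkin (Shisha--Mond) estimate anchored on the three test functions $e_0,e_1,e_2$ of \eqref{eqdefej}. First I would record positivity: for $0<q<1$ and real $a>0$, substituting $t=q^{a/2}$ in the Poisson kernel \eqref{eqPoiKer} rewrites the kernel of \eqref{eqdefTa} as
\[
\frac{(1-q)^a}{2^aq^{a/4}}\;g(\cos\t)\,\Big(\Sum H_n(\cos\t|q)H_n(\cos\f|q)\tfrac{q^{na/2}}{(q;q)_n}\Big)\,\frac{w_H(\cos\f|q)}{g(\cos\f)},
\]
with $g$ as in \eqref{eqdefg}. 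Since $g>0$ and $w_H>0$ on $[-1,1]$ and the $q$-Hermite Poisson kernel is $\ge 0$ there (the hypothesis used in Theorem \ref{Lem1}), $T_a$ maps nonnegative functions to nonnegative functions; in particular it is monotone, so $|T_a h|\le T_a|h|$ for every $h\in C[-1,1]$.

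Next, fix $x=\cos\t\in[-1,1]$ and Taylor expand $f$ in the integration variable $y$, writing $f(y)=f(x)+f'(x)(y-x)+R(x,y)$ with $R(x,y)=\int_x^y(y-t)f''(t)\,dt$, so that $|R(x,y)|\le\tfrac12\|f''\|(y-x)^2$. Applying $T_a$ in $y$ and using linearity gives
\[
(T_af)(x)-f(x)=f(x)\big[(T_ae_0)(x)-1\big]+f'(x)\big[(T_ae_1)(x)-x\,(T_ae_0)(x)\big]+\big(T_aR(x,\cdot)\big)(x).
\]
It then remains to bound the three pieces by $\O(a)$ uniformly in $x$.

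For the zeroth and first moments I would read off the closed forms \eqref{eqTae02} and \eqref{eqTae1}: both $(T_ae_0)(\cos\t)$ and $(T_ae_1)(\cos\t)$ are products of factors real-analytic in $a$ near $0$, the $\t$-dependence entering only through factors continuous and bounded on $[0,\pi]$. Since the $a=0$ specializations of those formulas collapse to $e_0=1$ and to $e_1=x$ respectively, the $a$-derivatives of these expressions are continuous, hence bounded, on a compact set $[0,\epsilon]\times[0,\pi]$; this yields $\|T_ae_0-1\|=\O(a)$ and $\|T_ae_1-x\,T_ae_0\|=\O(a)$. For the remainder, monotonicity gives $|(T_aR(x,\cdot))(x)|\le\tfrac12\|f''\|\,(T_a((\cdot-x)^2))(x)$, and by Lemma \ref{lem7.3}, $T_a((\cdot-x)^2)=x^2T_ae_0-2x\,T_ae_1+T_ae_2=\O(a)$ uniformly. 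Combining,
\[
\|T_af-f\|\le\|f\|\,\|T_ae_0-1\|+\|f'\|\,\|T_ae_1-x\,T_ae_0\|+\tfrac12\|f''\|\,\|T_a((\cdot-x)^2)\|=\O(a).
\]

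The genuinely hard step is the second-moment estimate $T_a((x-y)^2)=\O(a)$, which is exactly Lemma \ref{lem7.3} and may be assumed; its proof already absorbs the large cancellation among the $q$-shifted factorials in \eqref{eqTaone2}. Given that lemma, the only remaining care is asserting the $\O(a)$-bounds uniformly in $x$ rather than pointwise, which is why I stress that in the moment formulas the $\t$-dependence sits inside factors continuous (hence uniformly bounded) on the compact interval, and that $T_ae_0,T_ae_1$ interpolate to $e_0,e_1$ at $a=0$.
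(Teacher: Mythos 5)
Your proposal is correct and follows essentially the same route as the paper: Taylor-expand $f$ about $x$, control the affine part using the closed forms \eqref{eqTae02}--\eqref{eqTae1}, and bound the remainder by $\tfrac12\|f''\|\,T_a\big((\cdot-x)^2\big)=\O(a)$ via Lemma \ref{lem7.3}. The only differences are cosmetic: you use the integral form of the Taylor remainder and make explicit the positivity/monotonicity of $T_a$, which the paper's proof (using the Lagrange form $(y-x)^2 f''(u)$ and the bound $\|f''\|\le M$) invokes implicitly.
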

  \begin{proof}
 We write $x = \cos \t, y= \cos \f$. Write $f(y) = f(x) + (y-x)f^\prime(x) + (y-x)^2 f^{\prime\prime}(u)$. Using \eqref{eqTae02}--\eqref{eqTae1} we see that for fixed $x$
 \begin{eqnarray}
 \bg
T_a[f(x) + (y-x)f^\prime(x)] = [f(x) -xf^\prime(x)]T_ae_0 
+ f^\prime(x) T_ae_1 \\
= \frac{(1-q)^a(q^{a+2};q)_\infty}{2^aq^{a/4}(q;q)_\infty}  \frac{(-q^{1/4}e^{i\t}, -q^{1/4}e^{-i\t};q^{1/2})_\infty}
{(-q^{(a+1/2)/2} e^{i\t}, - q^{(a+1/2)/2} e^{-i\t};q^{1/2})_\infty}\\
\times \left[f(x) (1-q^{a+1}) + (1-q^{a/2})f^\prime(x)
 \{x(q^{1+a/2}-1)-\z q^{1/2}(1+q^{a/2})\}\right]. 
 \notag
 \eg
 \end{eqnarray}
 The quantity in the square bracket is clearly $\O(a)$. Assume that 
 $\| f^{\prime\prime}\| = M$. Then 
 \begin{eqnarray}
 \notag
 \|(T_af)(x) -f(x)\| \le  \O(a) + M \|T_a(e_2) -2x T_ae_1 + e_2(x)T_a(e_0)\| = \O(a),
\end{eqnarray}
by Lemma \ref{lem7.3}. 
\end{proof}

  \section{$T_a$ are Contraction Maps}
  In this section we prove that the operators $T_a$ are contractions maps. 
  
  \begin{thm}
 For  any $q$ satisfies 
  \begin{eqnarray}
  \label{eqcond}
 \frac{(1-q)}{2q^{1/4}} < 1,
  \end{eqnarray}
 there exists a positive number $c(q)>0$ such that for all $a>c(q)$  the operators  $T_a$  are contraction operators on $C[-1, 1]$. The condition \eqref{eqcond} is also necessary if $T_a$ is a contraction operator for any  $a>0$.
   \end{thm}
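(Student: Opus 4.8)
The plan is to reduce the whole statement to the operator norm of $T_a$ on $C[-1,1]$ and to exploit that $T_a$ is a \emph{positive} operator. First I would record that the kernel in \eqref{eqdefTa} is nonnegative: the scalar $(1-q)^a2^{-a}q^{-a/4}(q^a;q)_\infty$ is positive, $w_H$ and $\sin\f$ are nonnegative on $[0,\pi]$, and each of $(-q^{1/4}e^{i\t},-q^{1/4}e^{-i\t};q^{1/2})_\infty$ and $h(\cos\f;q^{a/2}e^{i\t},q^{a/2}e^{-i\t})$ is a product of terms of the form $|1+re^{i\alpha}|^2$ or $|(re^{i\alpha};q)_\infty|^2$ with $0<r<1$, hence strictly positive. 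Thus $T_a$ is a positive linear operator, and for such operators on $C[-1,1]$ the standard identity $\|T_a\|=\|T_ae_0\|_\infty$ holds (from $-\|f\|_\infty\le f\le\|f\|_\infty$ and positivity). So the theorem becomes a statement about the single explicit function $T_ae_0$ given in \eqref{eqTae02}.

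Next I would write $T_ae_0(\cos\t)=C_a\,R_a(\t)$, where $C_a=\left(\tfrac{1-q}{2q^{1/4}}\right)^a\tfrac{(q^{a+1};q)_\infty}{(q;q)_\infty}$ and $R_a(\t)$ is the quotient of $q$-shifted factorials in \eqref{eqTae02}. Since $T_ae_0\ge0$, its norm is its maximum over $\t$, so the core step is a uniform-in-$a$ estimate of $R_a(\t)$. For the numerator, $|1+q^{1/4+k/2}e^{i\t}|\le 1+q^{1/4+k/2}$ yields the bound $(-q^{1/4};q^{1/2})_\infty^2$; for the denominator, $|1+re^{i\t}|\ge 1-r$ together with the monotonicity of $x\mapsto(x;q^{1/2})_\infty$ on $(0,1)$ gives $\ge(q^{a/2+1/4};q^{1/2})_\infty^2\ge(q^{1/4};q^{1/2})_\infty^2$. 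Hence $R_a(\t)\le(-q^{1/4};q^{1/2})_\infty^2/(q^{1/4};q^{1/2})_\infty^2$ uniformly in $a>0$ and $\t$.

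For the sufficiency I would combine this with $(q^{a+1};q)_\infty\le1$ to obtain $\|T_a\|\le \mathrm{const}(q)\left(\tfrac{1-q}{2q^{1/4}}\right)^a$; under \eqref{eqcond} the right-hand side tends to $0$ as $a\to\infty$, so there is $c(q)>0$ with $\|T_a\|<1$ for all $a>c(q)$, which is the contraction claim. For the necessity I would evaluate at the endpoint $\t=0$, where $R_a(0)=(-q^{1/4};q^{1/2})_\infty^2/(-q^{a/2+1/4};q^{1/2})_\infty^2>1$ for every $a>0$, since $a\mapsto(-q^{a/2+1/4};q^{1/2})_\infty$ is decreasing; one also checks $(q^{a+1};q)_\infty/(q;q)_\infty>1$ for $a>0$. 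Therefore, if \eqref{eqcond} fails, i.e.\ $\tfrac{1-q}{2q^{1/4}}\ge1$, all three factors of $C_aR_a(0)$ are $\ge1$ with at least one strictly larger, so $\|T_a\|\ge C_aR_a(0)>1$ for every $a>0$; contrapositively, a contraction for some $a$ forces \eqref{eqcond}.

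The main obstacle I anticipate is the uniform control of the infinite-product quotient $R_a(\t)$: one must simultaneously produce a uniform upper bound for its numerator and a uniform strictly positive lower bound for its denominator over all $a>0$ and $\t\in[0,\pi]$. The monotonicity of the Euler product $(x;q^{1/2})_\infty$ in $x\in(0,1)$ is the ingredient that makes both estimates clean and that also settles the necessity through the endpoint evaluation at $\t=0$.
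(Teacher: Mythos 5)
Your proof is correct, and while it shares the paper's skeleton---positivity of $T_a$, the reduction $\|T_a\|=\|T_ae_0\|_\infty$, and the explicit formula \eqref{eqTae02}---both of your key estimates take a genuinely different route. For sufficiency, the paper does not settle for an upper bound: it observes that $(1+2xt+t^2)/(1+2xu+u^2)$ is increasing in $x$ for $0\le u<t\le 1$, so the supremum of $T_ae_0$ is attained exactly at $\theta=0$, and it then writes that value as $e^{h(a)}$ and runs a calculus argument ($h(0)=0$, $h''<0$, $h'$ eventually negative, $h\to-\infty$) to produce $c(q)$. You instead sacrifice sharpness: the uniform bound $R_a(\theta)\le (-q^{1/4};q^{1/2})_\infty^2/(q^{1/4};q^{1/2})_\infty^2$ (valid for all $a>0$ and all $\theta$, by the elementary inequalities $|1+re^{i\theta}|\le 1+r$, $|1+re^{i\theta}|\ge 1-r$ and monotonicity of the Euler product) gives $\|T_a\|\le C(q)\left(\tfrac{1-q}{2q^{1/4}}\right)^a\to 0$, which avoids all differentiation and yields an explicit admissible threshold, at the cost of a cruder (typically larger) $c(q)$ than the paper's exact-maximum analysis would allow. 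For necessity, the paper invokes the spectral lower bound $\|T_a\|\ge\lambda_0=\left(\tfrac{1-q}{2q^{1/4}}\right)^a$ coming from the eigenvalues \eqref{eqeigenvalues} of Theorem \ref{thm2}, whereas you evaluate $T_ae_0$ at the endpoint $x=1$ and check that all three factors of $C_aR_a(0)$ are $\ge 1$ with at least one strict; your version even gives the slightly stronger conclusion $\|T_a\|>1$ for every $a>0$ when \eqref{eqcond} fails, so it rules out contractions also in the non-strict sense. Both routes are sound; the paper's buys finer information about where the norm actually crosses $1$, yours buys a shorter, more elementary, and fully self-contained argument.
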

  \begin{proof} It is clear from \eqref{eqeigenvalues} and the fact that the norm is $\ge \l_0$ that the condition  \eqref{eqcond} is necessary.  It remain to show that \eqref{eqcond} is sufficient. 
 Note that $T_a$ are positive linear operators, hence we only need to show that $\|T_ae_0\| < 1$, that is $|(T_ae_0)(x)| < 1$ for all $x \in [-1,1]$ and all $ a $ large enough.  Observe that
  \begin{eqnarray}
  \notag
  \frac{1+ 2xt + t^2}{1+ 2xu + u^2}
  \end{eqnarray} 
 increases with $x$ for $x \in [-1,1], 0 \le u < t \le 1$. In view of 
 \eqref{eqTae0}, the above observation reduces the problem, $\|T_ae_0\| < 1$, to showing that 
 \begin{eqnarray}
  \frac{(1-q)^a(q^{a+1};q)_\infty}{2^aq^{a/4}(q;q)_\infty}  
  \frac{(-q^{1/4};q^{1/2})_\infty^2}
{(-q^{(a+1/2)/2};q^{1/2})_\infty^2} < 1,
\notag
\end{eqnarray}
 for $a >c(q)$, for some $c(q) >0$.  We set 
 \begin{eqnarray}
 \label{eqdefh}
  \frac{(1-q)^a(q^{a+1};q)_\infty}{2^aq^{a/4}(q;q)_\infty}  
  \frac{(-q^{1/4};q^{1/2})_\infty^2}
{(-q^{(a+1/2)/2};q^{1/2})_\infty^2} = e^{h(a)}.
\end{eqnarray}
 It is clear that $h(0) =0, h(a) \to -\infty$ as $a \to +\infty$, and 
 \begin{eqnarray}
 h(a) = a\log \left(\frac{1-q}{2q^{1/4}}\right) + \Sum \log(1-q^{a+n+1}) 
 - 2\Sum \log \left(\frac{1+q^{(n+a+1/2)/2}}{1+q^{(n+1/2)/2}}\right) . 
 \notag
 \end{eqnarray}
Now 
\begin{eqnarray}
\notag
h^\prime(a) = \log \left(\frac{1-q}{2q^{1/4}}\right)  - \log q
\Sum \frac{q^{a+n+1}}{1- q^{a+n+1}} -
  \log q  \Sum \frac{q^{(a+n+1/2)/2}}{1+q^{(a+n+1/2)/2}}. 
\end{eqnarray}
Therefore $h^{\prime\prime}(a) < 0$, so that $h^\prime(a)$ is decreasing.  But $h^\prime(a) <0$ for sufficiently large $a$ and 
$\lim_{a \to -1}h^\prime(a) =+\infty$. Hence there is a $c >0$ such 
that  $h^\prime(a) <0$ for all $a >c$. Therefore $h$ strictly  decreasing and the results follows.
 \end{proof}
 
\section{Another Semigroup}
 In this section we study the semigroup 
 \begin{eqnarray}
 \label{eqdeFa}
 \bg
 (F_a f)(\cos \t) \\
 = 
  \int_0^\pi  
 \frac{ (q^a;q)_\infty\; w_H(\cos \f|q)\;  f(\cos \f) \;  \sin \f \; d\f}
 { (q^{a/2}e^{i(\t+\f)}, q^{a/2}e^{i(\t-\f)},  q^{a/2} e^{-i(\t+\f)}, 
 q^{a/2}e^{i(\f-\t)};q)_\infty}. 
 \eg
 \end{eqnarray}
 It is clear from the definition and the orthogonality of the $q$-Hermite polynomial that 
  \begin{eqnarray}
 \label{eqFaHn}
 F_a H_n(x|q) = q^{an/2} H_n(x|q). 
 \end{eqnarray} 
 It must be noted that if we took \eqref{eqFaHn} as our starting point, 
 that is we define $F_a$ as a multiplier operator on $L_2[-1,1, w_H]$ 
then computed the kernel of $F_a$ as an integral operator we 
would have naturally found the definition \eqref{eqdeFa}. This is 
similar to the way Weyl originally defined the Weyl fractional integrals, see \cite{Zyg}. 
 
 The infinitesimal generator can be defined in a way similar to the way we defined the infinitesimal  generators for $T_a$ and $S_a$.

 We now  consider $F_a$ as approximation operators. 
\begin{thm}\label{thm10.1}
The operators $F_a$ have the properties 
\begin{eqnarray}
\notag
\textup{(a)}\;   (F_a e_0)(x) = 1,\qquad 
\textup{(b)} \; (F_ae_1)(x) = q^{a/2} x ,   \qquad 
\textup{(c)} \;  (F_ae_2)(x)= q^a x^2 + \frac{(1-q)}{4} \; (1-q^a). 
\end{eqnarray}
\end{thm}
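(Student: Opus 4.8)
The plan is to exploit the fact that $F_a$ acts diagonally on the continuous $q$-Hermite basis, namely the multiplier relation \eqref{eqFaHn}, which reduces each of the three assertions to expressing the monomial $e_j$ as a finite combination of $H_0, H_1, H_2$ and then reading off the eigenvalue $q^{aj/2}$ termwise. Thus the entire argument rests on inverting the recurrence \eqref{eqqHrr} in its three lowest instances, rather than on any direct evaluation of the integral \eqref{eqdeFa}.

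First I would invert the relation between powers of $x$ and the $q$-Hermite polynomials using \eqref{eqqHrr}. Since $H_0(x|q)=1$ and $H_1(x|q)=2x$, we get immediately $e_0=H_0$ and $e_1=\tfrac12 H_1$. For the quadratic, setting $n=1$ in \eqref{eqqHrr} gives $H_2(x|q)=2xH_1(x|q)-(1-q)H_0(x|q)=4x^2-(1-q)$, so that $e_2=\tfrac14 H_2+\tfrac{1-q}{4}H_0$.

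Next I would apply $F_a$ term by term via \eqref{eqFaHn}. Parts (a) and (b) follow at once, since $F_a e_0=F_a H_0=H_0=1$ and $F_a e_1=\tfrac12\,q^{a/2}H_1=q^{a/2}x$. For part (c) we compute $F_a e_2=\tfrac14\,q^a H_2+\tfrac{1-q}{4}H_0$; substituting $H_2=4x^2-(1-q)$ and collecting terms gives $q^a x^2+\tfrac{1-q}{4}(1-q^a)$, which is the claimed formula.

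I expect no genuine obstacle here: once \eqref{eqFaHn} is available, everything is a finite expansion in the three lowest $q$-Hermite polynomials, and the work is purely algebraic. The only point demanding mild care is the bookkeeping of the constant term in part (c), where the contributions $-\tfrac14 q^a(1-q)$ and $+\tfrac{1-q}{4}$ must be combined into the single factor $\tfrac{1-q}{4}(1-q^a)$.
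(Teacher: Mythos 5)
Your proposal is correct and follows exactly the route the paper intends: the paper states Theorem \ref{thm10.1} without proof, relying on the diagonal action \eqref{eqFaHn} together with the expansion of $e_0,e_1,e_2$ in $H_0,H_1,H_2$ via the recurrence \eqref{eqqHrr}, which is precisely your computation. Your algebra, including the constant term $q^a x^2 - \tfrac{q^a(1-q)}{4} + \tfrac{1-q}{4} = q^a x^2 + \tfrac{1-q}{4}(1-q^a)$ in part (c), checks out.
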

It is clear from \eqref{eqFaHn} that $(F_a H_m(\cdot\vert q))(x) \to H_m(x|q)$,
 uniformly on $[-1,1]$ as $ a \to 0$, for $m=0, 1, 2$. Moreover it is clear $T_a$ is self-adjoint on $L_2[-1,1, w_H]$.

 \begin{thm}\label{thm10.2} 
 The following statements are true\\
$\textup{(a)}$  The operators $F_a, a \ge 0$ form a semigroup of positive linear operators defined on $C[-1,1]$. \\
$\textup{(b)}$  $\lim_{a \to 0} (F_af)(x) = f(x)$,  for $f \in C[-1,1]$. \\
$\textup{(c)}$  $\|F_a\| =1, a \ge 0$, where the norm is for $F_a$ acting on $C[-1,1]$. \\
$\textup{(d)}$ The only eigenvalues of $F_a$, $a>0$, are $q^{ma/2}, 
m= 0, 1, \cdots$ with $H_n$ are the corresponding eigenfunctions. \\
$\textup{(e)}$ For every $a >0$, $F_a$ is a compact operator, hence 
is not invertible. 
 \end{thm}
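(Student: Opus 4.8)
The plan is to recognize the kernel of $F_a$ as a familiar object: setting $t=q^{a/2}$ in the Poisson kernel \eqref{eqPoiKer}, the denominator $(q^{a/2}e^{i(\t+\f)},q^{a/2}e^{i(\t-\f)},q^{a/2}e^{-i(\t+\f)},q^{a/2}e^{i(\f-\t)};q)_\infty$ together with the factor $(q^a;q)_\infty=(t^2;q)_\infty$ shows that the kernel of $F_a$ in \eqref{eqdeFa} is exactly the normalized Poisson kernel $P_r(\cos\t,\cos\f)=\Sum H_n(\cos\t|q)H_n(\cos\f|q)r^n/(q;q)_n$ evaluated at $r=q^{a/2}$. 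Thus $(F_af)(\cos\t)=\int_{-1}^1 P_{q^{a/2}}(\cos\t,y)f(y)w_H(y|q)\,dy$, which is precisely the operator of Theorem \ref{Lem1} with $r=q^{a/2}$. I would first record that this kernel is nonnegative: grouping the conjugate pairs $q^{a/2}e^{\pm i(\t+\f)}$ and $q^{a/2}e^{\pm i(\t-\f)}$ writes the denominator as $\prod_{k\ge0}|1-q^{a/2+k}e^{i(\t+\f)}|^2|1-q^{a/2+k}e^{i(\t-\f)}|^2>0$, while the numerator $(q^a;q)_\infty>0$ for $a>0$; hence $P_{q^{a/2}}\ge0$ and every $F_a$ is a positive operator.

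With this identification, parts (a)--(c) fall out on $C[-1,1]$. For (a), the multiplier identity \eqref{eqFaHn} gives $F_aF_bH_n=q^{(a+b)n/2}H_n=F_{a+b}H_n$; since the $H_n$ span all polynomials, which are dense in $C[-1,1]$, and each $F_a$ is bounded (indeed of norm $1$, see below), the semigroup law $F_aF_b=F_{a+b}$ extends from polynomials to all of $C[-1,1]$. Alternatively one may prove it directly by a Chapman--Kolmogorov computation on the Poisson kernel, exactly as in the proof of part (a) of Theorem \ref{thm1}. Positivity and linearity were noted above. Part (b) is immediate from Theorem \ref{Lem1}: as $a\to0^+$ we have $r=q^{a/2}\to1^-$, so $(F_af)(x)\to f(x)$ uniformly for every $f\in C[-1,1]$. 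For (c), a positive linear operator $L$ on $C[-1,1]$ satisfies $\|L\|=\|Le_0\|_\infty$ (from $-e_0\le f\le e_0$ when $\|f\|_\infty\le1$ and positivity); by part (a) of Theorem \ref{thm10.1} we have $F_ae_0=1$, whence $\|F_a\|=\|1\|_\infty=1$.

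Parts (d) and (e) I would carry out on $L_2[-1,1,w_H]$, where the spectral picture is transparent. Because the continuous $q$-Hermite polynomials form a complete orthogonal basis, any $f$ expands as $f=\Sum c_nH_n(\cdot|q)$, and \eqref{eqFaHn} gives $F_af=\Sum c_nq^{an/2}H_n(\cdot|q)$. If $f\neq0$ satisfies $F_af=\l f$, comparing coefficients forces $c_n(q^{an/2}-\l)=0$ for all $n$; since the numbers $q^{an/2}$ are distinct for $a>0$, $\l$ must equal some $q^{ma/2}$ and then $c_n=0$ for $n\neq m$, so $f$ is a multiple of $H_m$. This proves (d), in complete analogy with the proof of Theorem \ref{thm2}. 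For (e), the eigenvalues $q^{ma/2}\to0$ as $m\to\infty$, so $F_a$ is the operator-norm limit of its finite-rank truncations to $\mathrm{span}\{H_0,\dots,H_n\}$ (the tail has norm $q^{(n+1)a/2}$), hence compact; equivalently, its kernel is continuous on the compact square $[-1,1]^2$, so $F_a$ is Hilbert--Schmidt, as already noted for $T_a$ in Remark \ref{analytic-continuation}. Since $L_2[-1,1,w_H]$ is infinite dimensional, a compact operator cannot be invertible ($0$ lies in the spectrum), giving the last assertion.

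The main obstacle is not any single computation but the bookkeeping of two different functional-analytic settings: parts (a)--(c) are statements about $F_a$ acting on $C[-1,1]$, whereas (d)--(e) are cleanest on $L_2[-1,1,w_H]$. The delicate point is to justify that the multiplier identity \eqref{eqFaHn}, which is an $L_2$ statement about the basis $\{H_n\}$, legitimately governs the $C[-1,1]$ behavior; this is handled by density of polynomials together with the uniform bound $\|F_a\|=1$, so that equalities proved on the dense polynomial subspace persist on $C[-1,1]$. A secondary point worth stating carefully is the nonnegativity of the Poisson kernel, which is what makes Theorem \ref{Lem1} applicable and underlies both the positivity in (a) and the norm computation in (c).
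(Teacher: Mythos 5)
Your proposal is correct and follows essentially the same route as the paper: the paper's (very terse) proof likewise rests on the identification of the kernel of $F_a$ with the positive Poisson kernel at $r=q^{a/2}$ (giving the semigroup law and, via Korovkin, part (b)), on $F_ae_0=e_0$ plus positivity for the norm computation in (c), and on the multiplier identity \eqref{eqFaHn} for (d)--(e), handled exactly as for $T_a$ in Theorem \ref{thm2} and the compactness remark following it. Your write-up merely fills in the details the paper leaves implicit, such as the explicit verification that the kernel is nonnegative and the density argument transferring \eqref{eqFaHn} from polynomials to $C[-1,1]$.
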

 \begin{proof}
 The completeness of the $q$-Hermite polynomials  and the fact 
 that the kernel is the Poisson kernel establish the semigroup 
 property (a). In view of \ref{thm10.1}, 
 we can apply Korovkin's theorem to establish the $F_a$ tends to the identity operator as $ a \to 0+$, so (b) holds.  Part (c) follows from the positivity of the kernel of $F_a$, and the fact that $(F_a e_0)(x) = 1$. The proof of parts $(d)-(e)$ are similar to the case of $T_a$ and are omitted but they use 
 \eqref{eqFaHn}. 
 \end{proof}
 
These operators appeared in Suslov's work \cite{Sus} where he stated the   
semigroup property and $F_a \to I$ but he assumed that the functions   
are analytic in the unit disc. Parts (c)-(e) were not mentioned by Suslov.

 \begin{thm}\label{thm9.3}
 If $ a> 1$, then the operators $F_a$ satisfy the commutation relation 
 \begin{eqnarray}
 \label{eq10.3}
 \D F_a = q^{a/2}F_a \D
 \end{eqnarray}
 with 
 \begin{eqnarray}
 \label{eq10.4}
F_a \D f =  \frac{4}{(1-q)(1-q^{a-1})} \left(F_{a-1} \circ M -q^{(a-1)/2} M \circ F_{a-1}\right)f, 
\end{eqnarray}
where $(Mf)(x)=x \cdot f(x)$. Moreover,  $(\D F_af)(x) \ge 0$ if $\D f \ge 0$. 
 \end{thm}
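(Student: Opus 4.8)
The plan is to reduce each assertion to the action on the continuous $q$-Hermite basis, on which all of $F_a$, $\D$, and $M$ act explicitly, and then extend by linearity. The three facts I will use are the multiplier property \eqref{eqFaHn}, namely $F_a H_n(x|q)=q^{an/2}H_n(x|q)$; the lowering relation \eqref{eqDHn}, namely $\D H_n=2(1-q^n)(1-q)^{-1}q^{(1-n)/2}H_{n-1}$; and the three-term recurrence \eqref{eqqHrr} rewritten as $MH_n=xH_n=\tfrac12\bigl(H_{n+1}+(1-q^n)H_{n-1}\bigr)$.

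First I would prove the commutation relation $\D F_a=q^{a/2}F_a\D$. Applying the left side to $H_n$ and using \eqref{eqFaHn} then \eqref{eqDHn} gives $\D F_aH_n=q^{an/2}\D H_n=2(1-q^n)(1-q)^{-1}q^{(1-n)/2+an/2}H_{n-1}$. Applying the right side in the order \eqref{eqDHn} then \eqref{eqFaHn} gives $q^{a/2}F_a\D H_n=2(1-q^n)(1-q)^{-1}q^{(1-n)/2+a/2+a(n-1)/2}H_{n-1}$, and since $\tfrac{a}{2}+\tfrac{a(n-1)}{2}=\tfrac{an}{2}$ the exponents agree. Because the $q$-Hermite polynomials form a basis, the identity holds on their finite span and then extends.

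Next I would check the explicit formula by computing the bracketed operator on $H_n$. By the recurrence, $F_{a-1}MH_n=\tfrac12\bigl(q^{(a-1)(n+1)/2}H_{n+1}+(1-q^n)q^{(a-1)(n-1)/2}H_{n-1}\bigr)$, whereas $MF_{a-1}H_n=q^{(a-1)n/2}\cdot\tfrac12\bigl(H_{n+1}+(1-q^n)H_{n-1}\bigr)$. The decisive point is the cancellation of the raising term: the $H_{n+1}$ coefficient of $F_{a-1}M-q^{(a-1)/2}MF_{a-1}$ is $\tfrac12 q^{(a-1)(n+1)/2}-q^{(a-1)/2}\tfrac12 q^{(a-1)n/2}=0$. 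What survives is the $H_{n-1}$ coefficient $\tfrac12(1-q^n)q^{(a-1)(n-1)/2}\bigl(1-q^{a-1}\bigr)$, so multiplication by $\tfrac{4}{(1-q)(1-q^{a-1})}$ produces $2(1-q^n)(1-q)^{-1}q^{(a-1)(n-1)/2}H_{n-1}$. This matches $F_a\D H_n=2(1-q^n)(1-q)^{-1}q^{(1-n)/2+a(n-1)/2}H_{n-1}$ once one records the exponent identity $\tfrac{1-n}{2}+\tfrac{a(n-1)}{2}=\tfrac{(a-1)(n-1)}{2}$.

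The positivity assertion is then immediate: the commutation relation gives $\D F_af=q^{a/2}F_a(\D f)$, and $F_a$ is a positive operator (its kernel is the Poisson kernel, Theorem \ref{thm10.2}(a)), so $\D f\ge0$ forces $F_a(\D f)\ge0$ and hence $\D F_af\ge0$. I expect no single computation to be the real obstacle; rather it is the bookkeeping of the fractional $q$-exponents together with justifying that the two operator identities, verified on the span of the $q$-Hermite polynomials, extend to the relevant space of functions. Here the hypothesis $a>1$ is essential on both counts: it guarantees that $F_{a-1}$ is itself a bona fide positive operator and that the factor $1-q^{a-1}$ in the denominator does not vanish.
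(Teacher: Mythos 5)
Your proof is correct, but it takes a genuinely different route from the paper's. You verify both identities spectrally: everything is reduced to the three explicit actions on the $q$-Hermite basis --- the multiplier property \eqref{eqFaHn}, the lowering relation \eqref{eqDHn}, and the recurrence \eqref{eqqHrr} --- and the identities \eqref{eq10.3} and \eqref{eq10.4} become exponent bookkeeping, with the cancellation of the $H_{n+1}$ (raising) term laid bare; your computations check out. The paper instead works directly on the integral representation: for \eqref{eq10.3} it applies $\D$ in $x$ to the Poisson kernel \eqref{eqPoiKer}, re-expresses the result via the raising relation \eqref{eqraingHn} as $-q^{a/2}\mathcal{D}_{q,y}$ acting on the kernel times the weight, and then invokes the $q$-integration-by-parts formula of Brown--Evans--Ismail to move that derivative onto $f$; for \eqref{eq10.4} it differentiates the kernel in $y$ directly, producing the factor $(2y-2xq^{(a-1)/2})$ against the kernel with parameter $a-1$, which is precisely the combination $F_{a-1}\circ M-q^{(a-1)/2}M\circ F_{a-1}$ in one stroke. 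The trade-off is real: the paper's kernel computation yields the identities for any $f$ for which the integral manipulations are legitimate, with no expansion needed, whereas your basis argument is more elementary and transparent but leaves the extension step as a genuine obligation --- since $\D$ is unbounded, passing from the span of $\{H_n(\cdot|q)\}$ to general $f$ requires an argument (e.g., that $F_a$ smooths, so $\D F_a f$ makes sense and the termwise computation converges, while $F_a\D f$ is restricted to $f$ in the domain of $\D$). You flag this issue but do not carry it out; the paper's route avoids it. Your positivity argument (apply \eqref{eq10.3} and the positivity of $F_a$ from Theorem \ref{thm10.2}) is the same as the paper's, and stated more cleanly than the paper's somewhat garbled final sentence.
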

  The proof uses the raising relation \cite{Ismbook} 
 \bea 
 \label{eqraingHn}
\D[ w_x(x|q) H_n(x|q)] = - \frac{2q^{-n/2}}{1-q} w_x(x|q) 
H_{n+1}(x|q)
 \eea
 \begin{proof}[Proof of Theorem \ref{thm9.3}]
 It is clear that 
 \bea
 \notag
 \bg
 (\D F_af)(x) = \int_{-1}^1 f(y) w_H(y|q) \mathcal{D}_{q,x} \left[\Sum H_n(x|q)H_n(y|q)
 \frac{q^{an/2}}{(q;q)_n}\right] dy\\
 =  \frac{2}{1-q}  \int_{-1}^1 w_H(y|q) \sum_{n=1}^\infty 
  H_{n-1}(x|q) H_n(y|q) \frac{q^{an/2}}{(q;q)_{n-1}}q^{(1-n)/2}\;
  f(y)\;  dy\\
  =   \frac{2q^{a/2}}{1-q}  \int_{-1}^1 \Sum H_n(x|q) 
   \frac{q^{an/2}}{(q;q)_{n}} q^{-n/2} w_H(y|q) 
  H_{n+1}(y|q) \; f(y)\;  dy\\
  = - q^{a/2} \int_{-1}^1  f(y)\; 
  \mathcal{D}_{q,y}\left[w_H(y|q)  \Sum H_n(x|q) H_n(y|q) 
  \frac{q^{an/2}}{(q;q)_n}\right]\; dy. 
 \eg
 \eea
 We then apply the $q$-integration by parts formula of 
 \cite{Bro:Eva:Ism}, see  \cite[Theorem 16.1.1]{Ismbook},  and obtain 
 \bea
 \notag
  (\D F_af)(x) = q^{a/2} (F_a(\D f))(x),
 \eea
 which proves \eqref{eq10.3}. 
 
By  direct computation, since with $y = \cos \f$,  we have  
 \begin{eqnarray}
 \notag
 \bg
 (F_a \D f)(\cos \t) = (q^a;q)_\infty  
 \\
 \times \int_0^\pi   f(\cos \f) 
 \mathcal{D}_{q,y}  \left[
 \frac{ w_H(\cos \f|q) }
 { (q^{a/2}e^{i(\t+\f)}, q^{a/2}e^{i(\t-\f)},  q^{a/2} e^{-i(\t+\f)}, 
 q^{a/2}e^{i(\f-\t)};q)_\infty} \right]  \;  dy \\
 =  \frac{2}{1-q} \; (q^a;q)_\infty \\
 \times \int_0^\pi    
 \frac{ (2y-2xq^{(a-1)/2})\; w_H(y|q)f(y)   dy}
{ (q^{(a-1)/2}e^{i\t+\f)}, q^{(a-1)/2}e^{i(\t-\f)},  q^{(a-1)/2} e^{-i(\t+\f)}, 
q^{(a-1)/2} e^{i(\f-\t)};q)_\infty}.  
 \eg
 \end{eqnarray}
This proves \eqref{eq10.4}. Finally \eqref{eq10.3} shows that if 
$(\D f)(x) (\D F_a f)(x) \ge 0$ hence the last assertion follow.  
 \end{proof}
 
 We note that the last statement in Theorem \ref{thm9.3}, 
 namely $(\D F_af)(x) \ge 0$ if $\D f \ge 0$, is an analogue of a monotonicity preserving property of $F_a$.

We now define the infinitesimal generator of the semigroup $F_a$. The \cqHp  form a basis 
for $L_2[-1,1,w_H]$, hence it suffices to define the infinitesimal generator $J$ on the basis. Part (d) of Theorem \ref{thm10.2}  
suggests that 
\begin{eqnarray}
\notag
J H_m(\cdot \vert q) = \lim_{a \to 0^+} \frac{q^{ma/2}-1}{a} H_m(x|q)= \frac{m}{2} \log q H_m(x|q).
\end{eqnarray} 
 Thus 
 \begin{eqnarray} 
\textup{If}\;  f = \Sum f_n H_n(\cdot \vert q), \quad \textup{then} \; (Jf)(x) := \frac{1}{2}\log q \;  
 \Sum n f_n H_n(x|q). 
\end{eqnarray}
 Thus $J$ is unbounded and is only densely defined on both $L_2[-1,1,w_H]$ and  $C[-1,1]$. 
 \begin{thm}
  Assume that $f$ is twice continuously differentiable. Then 
 \begin{eqnarray}
 \|F_a f - f\| = \O(a),
 \end{eqnarray}
 as $a \to 0^+$. 
 \end{thm}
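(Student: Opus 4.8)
The plan is to mirror the proof of Theorem~\ref{thmOrderofApp}, exploiting three features of $F_a$: it reproduces constants, it is a positive linear operator (Theorem~\ref{thm10.2}), and its low-order moments are given explicitly in Theorem~\ref{thm10.1}. Writing $x$ for the fixed evaluation point and letting $y$ denote the integration variable, I would start from the second-order Taylor expansion with Lagrange remainder
\[
f(y) = f(x) + (y-x) f'(x) + \tfrac{1}{2}(y-x)^2 f''(\xi_{x,y}),
\]
valid for some $\xi_{x,y}$ between $x$ and $y$ since $f \in C^2[-1,1]$. Applying $F_a$ in the variable $y$ and using $(F_a e_0)(x)=1$ from part (a) of Theorem~\ref{thm10.1} yields
\[
(F_a f)(x) - f(x) = f'(x)\,\big[(F_a e_1)(x) - x\big] + \tfrac{1}{2} F_a\!\big((\cdot - x)^2 f''(\xi_{x,\cdot})\big)(x).
\]

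For the first term, parts (a) and (b) of Theorem~\ref{thm10.1} give $(F_a e_1)(x) - x = (q^{a/2}-1)x$, and since $q^{a/2}-1 = \O(a)$ and $|x| \le 1$ while $f'$ is bounded on $[-1,1]$, this contribution is $\O(a)$ uniformly in $x$. For the remainder, I would set $M = \|f''\|_\infty$ and invoke positivity of $F_a$ to bound
\[
\left| \tfrac{1}{2} F_a\!\big((\cdot - x)^2 f''(\xi_{x,\cdot})\big)(x) \right| \le \frac{M}{2}\, F_a\!\big((\cdot - x)^2\big)(x).
\]

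The crux is therefore the central second moment, which is the analogue of Lemma~\ref{lem7.3}. Expanding $(y-x)^2 = e_2 - 2x e_1 + x^2 e_0$ and applying all three parts of Theorem~\ref{thm10.1} gives
\[
F_a\!\big((\cdot - x)^2\big)(x) = x^2\,(1-q^{a/2})^2 + \frac{1-q}{4}\,(1-q^a).
\]
Here $1-q^{a/2} = \O(a)$ forces the first summand to be $\O(a^2)$, while $1-q^a = \O(a)$ makes the second summand the dominant $\O(a)$ term; hence $F_a((\cdot - x)^2)(x) = \O(a)$ uniformly on $[-1,1]$. Combining the two estimates gives $\|F_a f - f\| = \O(a)$.

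I do not expect a genuine obstacle here: because the moments of $F_a$ are available in closed form from Theorem~\ref{thm10.1}, the only quantitative input is the elementary expansion $1-q^{a} = -a\log q + \O(a^2)$, and positivity does the rest. The one point worth care is confirming the uniformity in $x$ of every $\O(a)$ estimate, which is immediate from $|x|\le 1$. I would also remark that this is precisely the Voronovskaya-type statement anticipated in the introduction, with the rate governed by the infinitesimal generator $J$ of $F_a$.
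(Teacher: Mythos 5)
Your proposal is correct and follows essentially the same route as the paper: the paper's proof is simply a reference to the argument for $T_a$ (Theorem \ref{thmOrderofApp}) combined with the moment formulas of Theorem \ref{thm10.1}, which is exactly what you carry out—Taylor expansion with Lagrange remainder, the exact identities $(F_ae_0)(x)=1$ and $(F_ae_1)(x)=q^{a/2}x$ for the linear part, and positivity plus the closed-form central second moment $x^2(1-q^{a/2})^2+\tfrac{1-q}{4}(1-q^a)=\O(a)$ playing the role of Lemma \ref{lem7.3}. Your write-up in fact supplies the details the paper leaves implicit, and your moment computation is correct.
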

 The proof is identical to the proof of Theorem \ref{thmOrderofApp}
 and uses Theorem \ref{thm10.1}.
 
We now provide an inversion formula for $F_a$ as operators defined 
on $L_2[-1,1,w_H]$. 
\begin{thm}\label{thm8.6}
Let $a\ge0$ and $g= \Sum q_n H_n(x|q)$. Then 
$g=F_a f$ with $f \in L_2[-1,1,w_H]$ if and only if  $g = \Sum g_n H_n(x|q)$ with $g_n q^{-na/2} \in \ell^2$. Furthermore, 
$f = \Sum g_n q^{-na/2} H_n(x|q)$. 
\end{thm}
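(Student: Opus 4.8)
The plan is to use that $F_a$ acts diagonally on the \cqHp, via \eqref{eqFaHn}, so that inverting $F_a$ reduces to dividing the expansion coefficients by the eigenvalues $q^{na/2}$. First I would record from the orthogonality relation \eqref{eqorcqH} that $\{H_n(x|q)\}$ is an orthogonal basis of $L_2[-1,1,w_H]$ with $\Vert H_n(\cdot|q)\Vert^2 = (q;q)_n$, so that every $f\in L_2[-1,1,w_H]$ has a unique expansion $f = \Sum f_n H_n(x|q)$ with $\sum_{n=0}^\infty |f_n|^2 (q;q)_n < \infty$. Since $0<q<1$, the weights decrease to $(q;q)_\infty$ and hence satisfy $(q;q)_\infty \le (q;q)_n \le 1$; this bounded equivalence is what lets me read the condition ``$g_n q^{-na/2}\in\ell^2$'' either in the unweighted sense or with the $(q;q)_n$ weight without ambiguity. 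Finally, because $0<q^{na/2}\le 1$ for $a\ge 0$, the operator $F_a$ is a contraction on $L_2[-1,1,w_H]$ and may therefore be applied term by term to the $L_2$-convergent series of any $f$.

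With these preliminaries the forward direction is immediate. If $g=F_a f$ with $f=\Sum f_n H_n(x|q)\in L_2[-1,1,w_H]$, then \eqref{eqFaHn} applied term by term gives $g=\Sum f_n q^{na/2} H_n(x|q)$, so by uniqueness of the expansion $g_n = q^{na/2} f_n$, equivalently $f_n = q^{-na/2} g_n$. The requirement $f\in L_2[-1,1,w_H]$ then reads $\sum_{n=0}^\infty |q^{-na/2} g_n|^2 (q;q)_n < \infty$, which is exactly the stated $\ell^2$ condition, and in that case $f=\Sum g_n q^{-na/2} H_n(x|q)$. For the converse I would begin with $g=\Sum g_n H_n(x|q)$ for which $(g_n q^{-na/2})\in\ell^2$, set $f:=\Sum g_n q^{-na/2} H_n(x|q)$, observe that the hypothesis is precisely what guarantees $f\in L_2[-1,1,w_H]$, and then apply $F_a$ term by term using \eqref{eqFaHn} to recover $F_a f=\Sum g_n q^{-na/2} q^{na/2} H_n(x|q)=g$.

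There is no genuinely hard step here: the whole argument is the inversion of a diagonal multiplier operator. The only points demanding care are the justification of the term-by-term application of $F_a$ to an infinite $L_2$ series, supplied by the boundedness of $F_a$ noted above, and the precise meaning of the $\ell^2$ condition, which the weight bound $(q;q)_\infty\le (q;q)_n\le 1$ renders harmless. That same boundedness remark also illuminates the structure of the statement: since $q^{-na/2}\to\infty$ for $a>0$, the inverse coefficient map $g_n\mapsto q^{-na/2} g_n$ is unbounded, so the $\ell^2$ condition is not automatic but genuinely cuts out a proper (dense) subspace of $L_2[-1,1,w_H]$ as the range of $F_a$, consistent with the non-invertibility recorded in Theorem \ref{thm10.2}.
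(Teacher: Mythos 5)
Your proof is correct and is essentially the paper's own argument: the paper disposes of this theorem in one line by invoking Parseval's theorem and the Riesz--Fischer theorem, and your write-up is precisely that argument spelled out (Parseval giving the coefficient identification $g_n = q^{na/2}f_n$ in the forward direction, Riesz--Fischer supplying the $L_2$ function $f$ from the coefficients $g_n q^{-na/2}$ in the converse). Your additional remarks on the bounds $(q;q)_\infty \le (q;q)_n \le 1$ and the boundedness of $F_a$ are exactly the details the paper leaves implicit.
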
  
 The theorem follows from Parseval's theorem and the Riesz-Fischer theorem. 
  
 Another inversion formula follows from Theorem \ref{invTaSa}. Recall the notation, from \eqref{eqdefg},
 \bea
 g(\cos \t) = (-q^{1/4} e^{i\t}, -q^{1/4} e^{-i\t};q^{1/2})_\infty.
 \eea
 We then use the fact that 
 \bea
\frac{(1-q)^a}{2^aq^{a/4}} g F_a(f/g) = T_a f,
 \eea
 and apply part (a) of  
 Theorem \ref{invTaSa}.
 
 
 We now revisit \eqref{eqIntertwine}. We look for an operator $\mathcal{B}_q$ 
 such that $\mathcal{B}_q F_a$ is a constant multiple of $F_{a-1}$. This will then identify a multiple of $F_a$ as a fractional inverse operator to $\mathcal{B}_q$. 
  We found that  
 \bea
 (\mathcal{B}_qf)(x) = 2q^{1/2} \frac{\breve{f}(q^{1/2}z) - z^2 \breve{f}(q^{-1/2}z)} {(q-1)(z^2-1)},  
 \eea
 will do the job. It is well defined because it is symmetric in $z$ and $1/z$.  It is easy to see that 
 \bea 
 \notag
 \mathcal{B}_q F_a = \frac{2q^{1/4}}{1-q} F_{a-1}.
 \eea
Therefore we define the new semigroup $\{G_a: a >0\}$ by 
\bea
 G_a = \frac{(1-q)^a}{2^a q^{a/4}} F_a.
 \eea
 Therefore $G_a$ is a semigroup, and is a fractional analogue of 
 an inverse to $\mathcal{B}_q$, in the sense that  $\mathcal{B}_qG_a = G_{a-1}$. 
 
 We note that $\mathcal{B}_q$ also has the representation 
 \bea
 \label{eqdefB}
 (\mathcal{B}_q f)(x) = \frac{1}{g(x)}( \D f g)(x),
 \eea
 has the property 
 \
  This leads us to modify the definition of $F_a$ to 
 Now 
 \eqref{eqIntertwine} may be stated as 
 \bea
 \bg
\int_{-1}^1  \E(x;t) (G_af)(x) w_H(x|q)\; dx \qquad \qquad \\
\qquad \qquad \qquad\qquad 
  = \frac{(1-q)^a\; (q^{a+1}t^2;q^2)_\infty}{2^a q^{a/4}\; (qt^2;q^2)_\infty} \int_{-1}^1 \E(y,tq^{a/2}) f(x) w_H(y|q) dy.
\eg
\eea
 This is like a $q$-analogue of a multiplier for the transform whose kernel is $\E(x;t) w_H(x|q)$.

 \section{A $q$-Gauss--Weierstrass Transform} 
 The Gauss--Weierstrass transform $W(\l)$  is defined by 
 \begin{eqnarray}
 \label{eqGWT}
 (W(\l) f)(x) = \frac{1}{\sqrt{2\pi}} \int_\R e^{ (t-x)^2/2\l} f(t) dt,
 \end{eqnarray}
 \cite{Hir:Wid}. It is clear that we can rescale to make $\l=1$. 
 
 We now define the $q$-analogue of this transform by 
 \begin{eqnarray}
 \label{eqqGWT}
 (\mathcal{W}_qf)(t) =(qt^2;q)_\infty  \int_{-1}^1  \E (x;t) w_H(x|q) f(x) dx
 \end{eqnarray}
 The motivation for this is that $w_H$ with rescaling tends to $e^{-x^2/2}$, the weight function for Hermite polynomials, as $q \to 1^-$. Moreover $\E(x;t)$ tend to $e^{xt}$. Furthermore $(qt^2;q)_\infty$ tends to $e^{-t^2/2}$. There are several $q$-analogues of the exponential function, \cite{Gas:Rah},  and they all appear here. 
 
 Ismail and Zhang \cite{Ism:Zha1} proved that 
  \begin{eqnarray}
  \label{eqGFqH}
(qt^2;q)_\infty  \E (x;t)  = \Sum \frac{q^{n^2/4}t^n }{(q;q)_n}
 H_n(x|q).
 \end{eqnarray} 
This shows that $(\mathcal{W}_q e_0)(t)=1$. Note that \eqref{eqGFqH} is the analogue of the familiar generating function
\begin{eqnarray}
\notag
\Sum H_n(x) \frac{t^n}{n!} = \exp(2xt-t^2) = e^{-t^2} e^{2xt}.
\end{eqnarray}

We now invert this transform. 
For $f \in L_2[-1,1, w_h]$, let $f = \Sum f_n H_n(x|q)$. Then Parseval's formula implies 
\begin{eqnarray}
\label{eqWl2}
 (\mathcal{W}_qf)(t) = \Sum f_n q^{n^2/4} t^n.
 \end{eqnarray}
 Since $\{f_n\} \in \ell^2$ then  $(\mathcal{W}_qf)(t)$ is entire and of order zero. 
 Moreover 
 \begin{eqnarray}
 f_n q^{n^2/4} =\frac{1}{n!}\; \left. 
  \frac{d^n}{dx^n} (\mathcal{W}_qf)(t)\right|_{t=0}.
 \end{eqnarray}
 This proves the following theorem.
 \begin{thm}
 An entire function $g$ is a $q$-Gauss--Weierstrass transform of a
 a function in $L_2[-1,1, w_h]$ if and only if $g(x) = \Sum g_n x^n$, and $q^{-n^2/4}g_n \in \ell^2$. When this condition is satisfied 
 then  $f(x) = \Sum q^{-n^2/4} g_n H_n(x|q)$. 
 \end{thm}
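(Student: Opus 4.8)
The plan is to reduce the whole statement to the single coefficient identity \eqref{eqWl2}, which turns $\mathcal{W}_q$ into the diagonal map $\{f_n\}\mapsto\{f_n q^{n^2/4}\}$ between $q$-Hermite coefficients and Taylor coefficients. First I would establish \eqref{eqWl2} carefully. For fixed $t$, the generating function \eqref{eqGFqH} presents the kernel $K_t(x):=(qt^2;q)_\infty\,\E(x;t)$ as the $q$-Hermite series $\sum_n \frac{q^{n^2/4}t^n}{(q;q)_n}H_n(x|q)$, and I would first check that this series converges in $L_2[-1,1,w_H]$: by \eqref{eqorcqH} the norm of its $n$-th term is $q^{n^2/4}|t|^n/\sqrt{(q;q)_n}\le q^{n^2/4}|t|^n/\sqrt{(q;q)_\infty}$, and $\sum_n q^{n^2/4}|t|^n<\infty$ for every $t$. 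Since both $K_t$ and $f=\sum_m f_m H_m$ lie in $L_2[-1,1,w_H]$, Parseval's theorem for the orthogonal system $\{H_n\}$ (with norms $(q;q)_n$) gives $(\mathcal{W}_q f)(t)=\sum_n \frac{q^{n^2/4}t^n}{(q;q)_n}\,f_n\,(q;q)_n=\sum_n f_n q^{n^2/4}t^n$, which is \eqref{eqWl2}.

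Next I would record the norm equivalence that singles out the $\ell^2$ condition: since $0<q<1$ we have $(q;q)_\infty\le(q;q)_n\le 1$, so $\|f\|^2=\sum_n|f_n|^2(q;q)_n$ is comparable to $\sum_n|f_n|^2$; hence $f\in L_2[-1,1,w_H]$ if and only if $\{f_n\}\in\ell^2$.

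For the forward direction, suppose $g=\mathcal{W}_q f$ with $f=\sum_n f_n H_n\in L_2[-1,1,w_H]$. Identity \eqref{eqWl2} exhibits $g$ as the power series $\sum_n f_n q^{n^2/4}t^n$; because $\{f_n\}$ is bounded and $q^{n^2/4}$ decays faster than any geometric factor, this series has infinite radius of convergence, so $g$ is entire with Taylor coefficients $g_n=f_n q^{n^2/4}$. Therefore $q^{-n^2/4}g_n=f_n\in\ell^2$. For the converse, let $g(x)=\sum_n g_n x^n$ be entire with $q^{-n^2/4}g_n\in\ell^2$, and set $f_n:=q^{-n^2/4}g_n$. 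Then $\{f_n\}\in\ell^2$, so by the norm equivalence $f:=\sum_n f_n H_n(\cdot|q)$ lies in $L_2[-1,1,w_H]$, and \eqref{eqWl2} gives $(\mathcal{W}_q f)(t)=\sum_n f_n q^{n^2/4}t^n=\sum_n g_n t^n=g(t)$; thus $g$ is the transform of $f$, and the inversion $f=\sum_n q^{-n^2/4}g_n H_n(\cdot|q)$ holds by construction.

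The only genuine obstacle is the justification of the interchange of sum and integral in \eqref{eqWl2}, i.e. the $L_2$-convergence of the kernel expansion for fixed $t$ and the resulting validity of Parseval; everything else is bookkeeping. Once \eqref{eqWl2} is secured, both implications and the inversion formula follow immediately, exactly as indicated in the discussion preceding the theorem.
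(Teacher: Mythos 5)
Your proof is correct and takes essentially the same route as the paper: both hinge on the coefficient identity \eqref{eqWl2}, obtained by pairing $f$ against the kernel expansion \eqref{eqGFqH} via Parseval, after which the characterization and the inversion formula are read off from the diagonal correspondence $f_n \mapsto f_n q^{n^2/4}$ between $q$-Hermite and Taylor coefficients. You merely make explicit what the paper leaves implicit — the $L_2$-convergence of the kernel series, the norm equivalence coming from $(q;q)_\infty \le (q;q)_n \le 1$, and the Riesz--Fischer step in the converse direction.
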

 
 We note that an inversion of the Gauss--Weierstrass transform 
\eqref{eqGWT} also involve an operational representation, see 
\cite{Hir:Wid}.

\section{Dual Integral Equations}
If we let $x=\cos\t$ and  $y=\cos\f$, then we consider the following dual integral equations 
\begin{equation}
\left\{\begin{array}{ll}
\int_0^\pi  
\frac{(-q^{1/4} e^{i\t}, -q^{1/4} e^{-i\t};q^{1/2})_\infty w_H(\cos \f|q) \psi(\cos \f)}
{(-q^{1/4} e^{i\f}, -q^{1/4} e^{-i\f};q^{1/2})_\infty  h(\cos \f; q^{a/2}e^{i\t}, q^{a/2} e^{-i\t})} \sin \f d\f= F(x) & -1<x<0,\\
\int_0^\pi  
\frac{(-q^{1/4} e^{i\t}, -q^{1/4} e^{-i\t};q^{1/2})_\infty w_H(\cos \f|q) \psi(\cos \f)}
{(-q^{1/4} e^{i\f}, -q^{1/4} e^{-i\f};q^{1/2})_\infty  h(\cos \f; q^{b/2}e^{i\t}, q^{b/2} e^{-i\t})} \sin \f d\f=G(x) & 0<x<1.

\end{array}\right.
\end{equation}
where $a>0$ and $b>0.$
In terms of the  $T_a$ operator, the above equations are 
\begin{equation}\label{dual1}
\left\{\begin{array}{ll}
T_a \psi=(\frac{1-q}{2q^{1/4}})^a(q^a;q)_{\infty} F &  -1<x<0, \\
T_b \psi=(\frac{1-q}{2q^{1/4}})^b(q^b;q)_{\infty} G &   0<x<1.
\end{array}\right.
\end{equation}
Next, we follow a nice technique due to  Noble in \cite{Nob}. First we define two functions $f(x)$ and $g(x)$
\begin{equation}\label{solution}
T_a \psi=(\frac{1-q}{2q^{1/4}})^a(q^a;q)_{\infty} f,  \qquad T_b \psi=(\frac{1-q}{2q^{1/4}})^b(q^b;q)_{\infty} g,
\end{equation}
then it is easily to know $f(x)=F(x)$ on $(-1,0)$ and $g(x)=G(x)$ on $(0,1)$.
To simplify the writing we shall use the following notation 
\bea
\notag
\bg
I_1=(-1,0) \qquad \textup{and}\qquad  I_2=(0,1), \\
f_1=F \quad \text{in $I_1$}\quad f_1=0 \quad \text{in $I_2$,} \quad
f_2=0 \quad \text{in $I_1$}\quad f_2=f \quad \text{in $I_2$}, \\
g_1=g \quad \text{in $I_1$}\quad g_1=0 \quad \text{in $I_2$}, \quad
g_2=0 \quad \text{in $I_1$}\quad g_2=G \quad \text{in $I_2$}. 
\eg
\eea
Also, $f_1(x)=F(x)$ on $I_1$ and $g_2(x)=G(x)$ on $I_2$.
We discuss three cases according to $a>(<)b$, or $a =b$.

{\bf Case (a)}: Assume $a>b$,   then  part (a) of Theorem \ref{thm1}  
yields 
\begin{equation}
(\frac{1-q}{2q^{1/4}})^b(q^b;q)_{\infty}T_{a-b}g=(\frac{1-q}{2q^{1/4}})^a(q^a;q)_{\infty}f.
\end{equation}
The $f_1(x)$ and $g_2(x)$ are known, if we evaluate on interval $I_1$, then
\begin{equation}
(\frac{1-q}{2q^{1/4}})^b(q^b;q)_{\infty}T_{a-b}g_1=(\frac{1-q}{2q^{1/4}})^a(q^a;q)_{\infty}f_1-(\frac{1-q}{2q^{1/4}})^b(q^b;q)_{\infty}T_{a-b}g_2,
\end{equation}
which leads to a integral equations
\begin{equation}
\int_{-1}^0 g_1(y)K_1(x,y)dy=H(x) \quad \text{on $(-1,0)$},
\end{equation}
where
\begin{eqnarray}
\bg
K_1(x,y)=(\frac{1-q}{2q^{1/4}})^a(q^b;q)_{\infty} \times
(q^{(a-b)};q)_{\infty} \\
\frac{(-q^{1/4} e^{i\t}, -q^{1/4} e^{-i\t};q^{1/2})_\infty w_H(\cos \f|q) }
{(-q^{1/4} e^{i\f}, -q^{1/4} e^{-i\f};q^{1/2})_\infty  h(\cos \f; q^{(a-b)/2}e^{i\t}, q^{(a-b)/2} e^{-i\t})}.
\eg
\end{eqnarray}
and 
\begin{equation}
H(x)=(\frac{1-q}{2q^{1/4}})^a(q^a;q)_{\infty}f_1-(\frac{1-q}{2q^{1/4}})^b(q^b;q)_{\infty}T_{a-b}g_2.
\end{equation}
If we know $g_1(x)$, then we integrate over  $I_2$ and find that
\begin{equation}
(\frac{1-q}{2q^{1/4}})^b(q^b;q)_{\infty}T_{a-b}(g_1+g_2)=(\frac{1-q}{2q^{1/4}})^a(q^a;q)_{\infty}f_2.
\end{equation}
We are now able to evaluate $\psi(x)$ by Theorem  \ref{invTaSa} $\textup{(a)}$ and \eqref{solution}. 

{\bf Case (b)}: If $a=b$, Theorem  \ref{invTaSa} $\textup{(a)}$, \eqref{dual1} and \eqref{solution}
lead to 
\begin{equation}\
\psi(x)=\left\{\begin{array}{ll}
(\frac{1-q}{2q^{1/4}})^a(q^a;q)_{\infty} \mathcal{D}_q^{[a]+1}T_{1-\{a\}}f_1 &  -1<x<0, \\
(\frac{1-q}{2q^{1/4}})^b(q^b;q)_{\infty}\mathcal{D}_q^{[a]+1}T_{1-\{a\}} g_2 &   0<x<1.
\end{array}\right.
\end{equation}
Setting
\begin{equation}
h(x)=\left\{\begin{array}{ll}
(\frac{1-q}{2q^{1/4}})^a(q^a;q)_{\infty}f_1 &  -1<x<0, \\
(\frac{1-q}{2q^{1/4}})^a(q^a;q)_{\infty} g_2 &   0<x<1.
\end{array}\right.
\end{equation}
Then $\psi(x)$ can be written as 
\begin{eqnarray}
\bg
\psi(x)=(\frac{1-q}{2q^{1/4}})^{1-\{a\}}(q^{1-\{a\}};q)_{\infty}\\
\times \mathcal{D}_{q}^{[a]+1}\int_0^{\pi}\frac{(-q^{1/4} e^{i\t}, -q^{1/4} e^{-i\t};q^{1/2})_\infty w_H(\cos \f|q) h(\cos\f)\;  \sin \f d\f}
{(-q^{1/4} e^{i\f}, -q^{1/4} e^{-i\f};q^{1/2})_\infty  h(\cos \f; q^{(1-\{a\})/2}e^{i\t}, q^{(1-\{a\})/2} e^{-i\t})}.
\eg
\end{eqnarray} 
 
{\bf Case (c)}: If $a<b$, then we can see that 
\begin{equation}
(\frac{1-q}{2q^{1/4}})^b(q^b;q)_{\infty}g=(\frac{1-q}{2q^{1/4}})^a(q^a;q)_{\infty}T_{b-a}f.
\end{equation}
Next we evaluate the integral on the  interval  $I_2$ and find that 
\begin{equation}
(\frac{1-q}{2q^{1/4}})^a(q^a;q)_{\infty}T_{b-a}f_2=(\frac{1-q}{2q^{1/4}})^b(q^b;q)_{\infty}g_2-(\frac{1-q}{2q^{1/4}})^a(q^a;q)_{\infty}T_{b-a}f_1.
\end{equation}
It also leads to the  integral equation
\begin{equation}
\int_{0}^1 f_2(y)K_2(x,y)dy=P(x) \quad \text{on $(0,1)$},
\end{equation}
where
\begin{eqnarray}
\bg
K_2(x,y)=(\frac{1-q}{2q^{1/4}})^b(q^a;q)_{\infty}  
(q^{(b-a)};q)_{\infty} \\	\frac{(-q^{1/4} e^{i\t}, -q^{1/4} e^{-i\t};q^{1/2})_\infty w_H(\cos \f|q) }
{(-q^{1/4} e^{i\f}, -q^{1/4} e^{-i\f};q^{1/2})_\infty  h(\cos \f; q^{(b-a)/2}e^{i\t}, q^{(b-a)/2} e^{-i\t})}, 
\eg
\end{eqnarray}
and 
\begin{equation}
P(x)=(\frac{1-q}{2q^{1/4}})^b(q^b;q)_{\infty}g_2-(\frac{1-q}{2q^{1/4}})^a(q^a;q)_{\infty}T_{b-a}f_1.
\end{equation}
We then evaluate the integral on the interval $I_1$,and conclude that
\begin{equation}
(\frac{1-q}{2q^{1/4}})^b(q^b;q)_{\infty}g_1=(\frac{1-q}{2q^{1/4}})^a(q^a;q)_{\infty}T_{b-a}(f_1+f_2).
\end{equation}
This enables us to find  $\psi(x)$ by using Theorem  \ref{invTaSa} $\textup{(a)}$  and \eqref{solution}.

If we modify Sneddon's method \cite{Sne}, then we also can solve the  special case when $a-b$ is an integer. If $a-b=0$, then it is case (b) that we discuss above.  

{\bf Case(d)} if $a>b$, then define 
\begin{equation}
m(x)=\left\{\begin{array}{ll}
(\frac{1-q}{2q^{1/4}})^a(q^a;q)_{\infty}\mathcal{D}_q^{a-b}f_1 &  -1<x<0, \\
(\frac{1-q}{2q^{1/4}})^b(q^b;q)_{\infty} g_2 &   0<x<1.
\end{array}\right.
\end{equation}
It is clear that $T_b\psi(x)=m(x)$, and using Theorem  \ref{invTaSa} $\textup{(a)}$, we obtain the following  integral representation
 for $\psi$, 
\begin{eqnarray}
\bg
\psi(x)=(\frac{1-q}{2q^{1/4}})^{1-\{b\}}(q^{1-\{b\}};q)_{\infty}\\
\times 
\mathcal{D}_{q}^{[b]+1}\int_0^{\pi}\frac{(-q^{1/4} e^{i\t}, -q^{1/4} e^{-i\t};q^{1/2})_\infty w_H(\cos \f|q) m(\cos\f)\; \sin \f d\f}
{(-q^{1/4} e^{i\f}, -q^{1/4} e^{-i\f};q^{1/2})_\infty  h(\cos \f; q^{(1-\{b\})/2}e^{i\t}, q^{(1-\{b\})/2} e^{-i\t})} .
\eg
\end{eqnarray}

{\bf Case(e)} if $a<b$, then define $n$ by 
\begin{equation}
n(x)=\left\{\begin{array}{ll}
(\frac{1-q}{2q^{1/4}})^a(q^a;q)_{\infty}f_1 &  -1<x<0, \\
(\frac{1-q}{2q^{1/4}})^b(q^b;q)_{\infty}\mathcal{D}_q^{b-a} g_2 &   0<x<1.
\end{array}\right.
\end{equation}
Thus  $T_a\psi(x)=n(x)$, and using Theorem  \ref{invTaSa} $\textup{(a)}$, we establish  the  integral representation 
\begin{eqnarray}
\bg
\psi(x)=(\frac{1-q}{2q^{1/4}})^{1-\{a\}}(q^{1-\{a\}};q)_{\infty}\\
\times \mathcal{D}_{q}^{[a]+1}\int_0^{\pi}\frac{(-q^{1/4} e^{i\t}, -q^{1/4} e^{-i\t};q^{1/2})_\infty w_H(\cos \f|q) n(\cos\f)\;  \sin \f d\f}
{(-q^{1/4} e^{i\f}, -q^{1/4} e^{-i\f};q^{1/2})_\infty  h(\cos \f; q^{(1-\{a\})/2}e^{i\t}, q^{(1-\{a\})/2} e^{-i\t})}.
\eg	
\end{eqnarray}

{\bf Acknowledgments} The first author wishes to thank Zeev Ditzian 
of the University of Alberta for many enlightening discussions during the preparation of this paper. 
 
 \newpage

\bigskip

\noindent M. E. H. I, 
School of Mathematics and Computational Science\\Guilin University 
of Electronic Technology\\ Guilin, Guangxi 541004, P. R. China\\ 
and Department of Mathematics\\
University of Central Florida, Orlando, Florida 32816\\
email: ismail@math.ucf.edu
  \bigskip
   
\noindent R. Zhang,  School of Mathematics and Computational 
Science\\Guilin University of Electronic Technology\\ Guilin, 
Guangxi 541004, P. R. China.\\ 
email:  ruimingzhang8@qq.com

 \bigskip 
 \noindent K. Zhou,
School of Mathematics and Statistics\\ Central South University\\ Changsha, Hunan 410083, P.R. China.\\
email address: krzhou1999@knights.ucf.edu

 \end{document}